\theoremstyle{plain}
\newtheorem{theo}{Theorem}[section]
\newtheorem{prop}[theo]{Proposition}
\newtheorem{lem}[theo]{Lemma}
\newtheorem{coro}[theo]{Corollary}
 \theoremstyle{definition}
 \newtheorem{ex}[theo]{Example}
 \newtheorem{exs}[theo]{Examples}
 \theoremstyle{remark}
 \newtheorem{rem}[theo]{Remark}
  \newtheorem{rems}[theo]{Remarks}
\numberwithin{equation}{section}
\newcommand{\freealgebra}[2]{{#1} \langle {#2}\rangle}
\newcommand{\naturals}{\mathbb{N}}
\newcommand{\integers}{\mathbb{Z}}
\DeclareMathOperator{\supp}{supp} \DeclareMathOperator{\h}{h}
 \DeclareMathOperator{\Aut}{Aut}
\DeclareMathOperator{\gr}{gr}
 \DeclareMathOperator{\Der}{Der}
\begin{document}

\title {The Inversion Height of the Free Field is Infinite}

\author{Dolors Herbera}
\address{Departament de Matem\`atiques \\
Universitat Aut\`onoma de Barcelona \\ 08193 Bellaterra (Barcelona),
Spain} \email{dolors@mat.uab.cat}

\author{Javier S\'anchez }
\address{
Department of Mathematics - IME \\ University of S\~ao Paulo\\ Caixa
Postal 66281\\ S\~ao Paulo, SP\\ 05314-970, Brazil}
\email{jsanchez@ime.usp.br}

\thanks {The second named author was supported by  Funda\c{c}\~ao
de Amparo \`a Pesquisa do Estado de S\~ao Paulo (FAPESP) processo
n\'umero 2009/50886-0. \\ Both authors acknowledge partial support
from  DGI MICIIN MTM2011-28992-C02-01, and by the Comissionat per
Universitats i Recerca of the Generalitat de Ca\-ta\-lunya, Project
2009 SGR 1389. \protect\newline 2000 Mathematics Subject
Classification. Primary: 16K40; 16S34; 16S35 Secondary: 16S10;
16W60.}

\maketitle
\begin{abstract}
Let $X$ be  a finite set with at least two elements, and let $k$ be
any commutative field. We prove that the inversion height of the
embedding $k\langle X\rangle \hookrightarrow D$, where $D$ denotes
the universal (skew) field of fractions of the free algebra
$k\langle X\rangle$, is infinite. Therefore, if $H$ denotes the free
group on $X$, the inversion height of the embedding of the group
algebra $k[H]$ into the Malcev-Neumann series ring is also infinite.
This answer in the affirmative a question posed by Neumann in 1949
\cite[p.~215]{Neumann}.

We also give an infinite family of examples of non-isomorphic fields
of fractions of $k\langle X\rangle$ with infinite inversion height.

We show that the universal field of fractions of a crossed product
of a commutative field by the universal enveloping algebra of a free
Lie algebra is a field of fractions constructed by Cohn (and later
by Lichtman). This extends a result by A. Lichtman.
\end{abstract}

\tableofcontents

\section{Introduction}

Let $X$ be a set with $\vert X\vert\geq 2$, $H$ be the free group on
 $X$ and $k$ be a commutative field. Choose a total order on $H$ such that $(H,<)$
 is an ordered group.  Consider the Malcev-Neumann series ring
 $k((H,<))$ associated with the group ring $k[H]$. B.H. Neumann conjectured in
 \cite[p.~215]{Neumann} that

\medskip

\begin{enumerate}[\ \  (N)]
\item the inversion height of the embedding
$k[H]\hookrightarrow k((H,<))$ is infinite.  Equivalently,  in the
(skew) subfield $E=E(X)$ of $k((H,<))$ generated by $k[H]$ there
exist elements which need an arbitrary large number of nested
inversions to be constructed as a rational expression from elements
of $k[H]$.
 \end{enumerate}

\medskip

The  field $E=E(X)$  can be characterized by its categorical
properties. It was proved by Lewin \cite{Lewin} that it is the
universal field of fractions of $k[H]$ and, hence, it is also the
universal field of fractions of  $k\langle X \rangle$, the free
algebra on $X$; because of that $E$ is usually named \emph{the free
field on $X$}. We recall that $k\langle X\rangle$ can also be seen
as the enveloping algebra of the free Lie algebra on $X$.

The interest on conjecture (N) was renewed in
\cite{Gelfandretakhwilson} where the theory of Quasideterminants was
developed. C. Reutenauer brilliantly proved in
\cite[Theorem~2.1]{Reutenauerinversionheight} that  the conjecture
holds when $X$ is infinite and  $k$ is a commutative field. As
suggested in \cite[Section~5.2]{Reutenauerinversionheight}, it was
expected that (N) should hold in general  because a free algebra $R$
over a set of at least two elements contains many subalgebras $S$
that are isomorphic to a free algebra over an infinite (countable)
set. The difficulty in settling the  question with this approach was
being able to choose a subalgebra $S$ such that the universal field
of fractions of $S$ can be seen inside the one of $R=K\langle
X\rangle$ and that, in addition, the inversion height is preserved
through the embedding. In this paper, we overcome this problem
considering the more flexible structure of crossed product. More
precisely, seeing $R$   as a crossed product of the subalgebra $S$
with \emph{something else}   we can produce, via Reuteneauer's
result, elements in $E$ of arbitrary inversion height. Hence we give
the final step to solve conjecture (N).

Crossed products can be considered in the group context,
in the context of  Lie algebras or, unifying both settings,
for Hopf algebras. They have proved to be specially suitable
for induction-type arguments and also in the
construction of quantum deformation of classical algebraic objects.

Throughout  the paper, we give several constructions of elements in
the free field $E$ of arbitrary inversion height, keeping in
parallel the point of view of crossed products of Lie algebras and
the one of group crossed products. In Section \ref{sec:elementary},
we give the most elementary constructions to produce  elements of
arbitrary large inversion height. We use the ideas of an embedding
due to Cohn \cite{Cohnembeddingtheorem} that allows to see the free
algebra as an Ore differential extension of a free algebra on
infinitely many variables. Such kind of extensions are the easiest
example of crossed product of Lie algebras. Then we are able to give
an elementary solution to conjecture (N) in
Theorem~\ref{theo:firstsolution}.

 On the group side, if $H$ is a
free group, any onto group homomorphism  $\varphi$ from $H$ to an
infinite cyclic group allows to see $k[H]$ as a skew Laurent
polynomial ring with coefficients on the group algebra over the free
group $\mathrm{Ker} \, \varphi$, again this is the easiest example
of crossed product of groups.  Such description of the group algebra
allows us to give in Theorem \ref{theo:solutiontoNeumann} another
elementary solution to conjecture (N).

In Section \ref{sec:other}, we deeply use  the theory of crossed
product of groups to produce infinitely many non-isomorphic
embeddings of the free algebra into division rings of infinite
inversion height. Hence, the property of having infinite inversion
height does not characterize the universal field of fractions.

In Sections \ref{sec:crossedproducts} and
\ref{sec:afieldoffractions}, we develop some specific theory of
crossed products for Lie algebras, and we give a construction of a
field of fractions, as a subfield of a power series ring, for the
crossed product of a field by a residually nilpotent Lie algebra
with a $Q$-basis. In the case of a free Lie algebra $H$ or, more
generally, when the crossed product is a fir, this gives a
construction of the universal field of fractions.  In
Section~\ref{sec:furtherexamples}, we use this theory to produce
further examples of elements with arbitrary large inversion height
into the free field. A different line of applications of this
construction is given in Example~\ref{ex:poisson} to the enveloping
algebra of the free Poisson field, cf.
\cite{Makar-LimanovUmirbaevfreePoissonfields}.

In the case of an ordered  group, the  Malcev-Neumann series ring
gives a very neat way to embed a crossed product of an arbitrary
field by the group  into a field. As mentioned before, when the
group is free, this yields an embedding of the universal field of
fractions of the crossed product in such power series ring. This was
proved by Lewin in \cite{Lewin} using a deep result of Hughes on the
uniqueness of some field of fractions \cite{Hughes}.

On the Lie algebra side, a well known result of Cohn implies that
any crossed product of a field by the universal enveloping algebra
of a Lie algebra can be embedded into a field, cf.
Proposition~\ref{prop:canonicalfieldoffractions}, which we call the
canonical field of fractions. But an analog of the Malcev-Neumann
series ring construction, possibly containing the canonical field of
fractions, is missing in the setting of \emph{ordered Lie algebras}.
Our main results in Section 6 aim  to fill this gap in the case of
crossed products of residually nilpotent Lie algebras with a
$Q$-basis. In our constructions, we follow and extend results and
ideas due to Lichtman \cite{Lichtmanvaluationmethods,
Lichtmanuniversalfields}.

As we have already mentioned, all our results on inversion height
are based on Reutenauer's ones. It seems an interesting and
challenging question to extend Reutenauer's results  from
commutative fields to arbitrary (skew) fields. We note that our
approach to pass from the case of countable infinitely many
variables to the finite one does not use any commutativity and it
works for general crossed products.

\section{Preliminaries}

We begin this section fixing some notions that will be used
throughout the paper.

All rings are assumed to be associative and with $1$. A morphism of
rings  $\alpha\colon R\rightarrow S$ always preserves $1$'s, i.e.
$\alpha$ sends $1_R$ to $1_S$.

By an \emph{embedding} $\iota\colon R\hookrightarrow E$ we mean an
injective morphism of rings where we identify $R$ with its image in
$E$.

A \emph{domain} is a nonzero ring $R$ such that the product of any
two nonzero elements is nonzero.

Following \cite{Cohnskew}, a \emph{field} $E$ is a nonzero ring such
that every nonzero element has an inverse, i.e. if $x\in
E\setminus\{0\}$ there exists $x^{-1}\in E$ such that
$xx^{-1}=x^{-1}x=1$.

Note that  domains and fields are not supposed to be commutative. In
the literature, our concept of field is also known as division ring
or skew field.

\subsection{Skew polynomial rings and skew Laurent series} \label{sec:1.1}
Let $S$ be a ring and $\alpha\colon S\rightarrow S$ an injective
endomorphism of rings.

A (left) \emph{$\alpha$-derivation} is an additive map $\delta\colon
S\rightarrow S$ such that
$\delta(ab)=\delta(a)b+\alpha(a)\delta(b)$.

We denote by $S[x;\alpha,\delta]$ the \emph{skew polynomial ring}.
It is a ring extension of $S$ which is a free left $S$-module with
basis $\{1,x,\dotsc,x^n,\dotsc\}$, thus the elements can be uniquely
written as $$a_0+a_1x+\dotsb+a_nx^n \textrm{ with } a_i\in S,\
n\in\naturals,\ a_n\neq0,$$ and $xa=\alpha(a)x+\delta(a)$ for all
$a\in S$. When $\delta=0$, we write $S[x;\alpha]$ instead of
$S[x;\alpha,0]$, and when $\alpha$ is the identity on $S$, we write
$S[x;\delta]$ instead of $S[x;\alpha,\delta]$.

If $S$ is a domain, the ring $S[x;\alpha,\delta]$ is also a domain.
If $S$ is a left Ore domain, then $S[x;\alpha,\delta]$ is a left Ore
domain. If $S$ is a field, we denote its left Ore field of fractions
by $S(x;\alpha,\delta)$ (respectively $S(x;\alpha)$, $S(x;\delta)$).

When $\delta=0$, we can consider the skew series ring
$S[[x;\alpha]]$ which consists of all infinite series
$$a_0+a_1x+\dotsb+a_nx^n+\dotsb,\ a_n\in S\textrm{ for all } n\in\naturals,$$
with componentwise addition and multiplication based on the
commutation rule $$xa=\alpha(a)x,\ \textrm{for all } a\in S.$$ The
set $\{1,x,\dotsc,x^n,\dotsc\}$ is a left Ore set in
$S[[x;\alpha]]$, and we denote its Ore localization by
$S((x;\alpha))$. The elements of $S((x;\alpha))$ are of the form
$$x^{-r}\sum_{n=0}^\infty a_nx^n \textrm{ with } r\in\naturals,\ a_n\in S \textrm{ for all } n.$$
If $S$ is a field, $S((x;\alpha))$ is a field that contains
$S(x;\alpha)$. If $\alpha$ is bijective, the elements of
$S((x;\alpha))$ can be written as $\sum\limits_{n\geq -r}a_nx^n$
with $r\in\naturals$ and $a_n\in S$ for all $n$.

When $\delta=0$ and $\alpha$ is bijective, the subring of
$S((x;\alpha))$ consisting of the polynomials of the form
$$a_{-m}x^{-m}+a_{-m+1}x^{-m+1}+\dotsb+ a_0+a_1x+\dotsb+a_nx^n,\
\textrm{ with } a_i\in S,\ m,n\in\naturals,$$ is called
the \emph{skew Laurent polynomial ring} and denoted by
$S[x,x^{-1};\alpha]$. If $S$ is a left Ore domain,
$S[x,x^{-1};\alpha]$ is also a left Ore domain. If $S$ is a field,
the left Ore field of fractions is $S(x;\alpha)$.

\medskip

When $\delta\neq 0$ and $\alpha$ is injective, we can also construct
a similar ring of series (to understand its definition, notice that
the relation $xa=\alpha(a)x+\delta (a)$ implies that
$ax^{-1}=x^{-1}\alpha(a)+x^{-1}\delta(a)x^{-1}$). We introduce a new
variable $y=x^{-1}$, and  we consider the ring of series
$$a_0+ya_1+\dotsb+y^na_n+\dotsb \textrm{ with } a_n\in S \textrm{
for all } n\in\naturals,$$ (coefficients on the right) with
componentwise addition and multiplication based on the commutation
rule
\begin{equation}\label{eq:commutationruleseries}
ay=y\alpha(a)+y^2\alpha(\delta(a))+\dotsb+y^n\alpha(\delta^{n-1}(a))+\dotsb=\sum_{n\geq1}
y^n\alpha(\delta^{n-1}(a)),
\end{equation}
for each $a\in S$. This ring of series will be denoted by
$S[[y;\alpha,\delta]]$. The set $\{1,y,\dotsc,y^n,\dotsc\}$ is a
right Ore set and we denote by $S((y;\alpha,\delta))$ its Ore
localization. So the elements of $S((y;\alpha,\delta))$ are of the
form $$\Big(\sum_{n=0}^\infty y^na_n\Big)y^{-r} \textrm{ with }
r\in\naturals,\ a_n\in S \textrm{ for all } n\in\naturals.$$ If $S$
is a field, then $S((y;\alpha,\delta))$ is a field.

From \ref{eq:commutationruleseries}, it is easy to see that the
assignment $x\mapsto y^{-1}$ induces an injective morphism of rings
$S[x;\alpha,\delta]\to S[[y;\alpha,\delta]]$ which is the identity
on $S$. The universal property of the Ore localization, allows to
extend this embedding to an embedding of fields
$S(x;\alpha,\delta)\to S((y;\alpha,\delta)).$

Finally, we observe that if $\alpha$ is an automorphism then the
elements of $S((y;\alpha,\delta))$ can be written, in a unique way,
in the form
$$\sum_{n\ge l}^\infty  a_ny^{n} \textrm{ with }
l\in\mathbb{Z},\ a_n\in S \textrm{ for all } n\in\naturals.$$

\subsection{Crossed products and Malcev-Neumann
series}\label{subsec:crossedproducts}

Let $R$ be a ring, and let  $G$ be a group. We define a
\emph{crossed product} $RG$ (of $R$ by $G$) as an associative ring
which contains $R$ constructed in the following way. It is a free
left $R$-module   with basis $\overline G$, a copy (as a set) of
$G$. The elements in $RG$ are uniquely written as $\sum\limits_{x\in
G}a_x\bar x$ where only a finite number of $a_x\in R$ are nonzero.
Multiplication
is determined by the two rules below:\\
\indent \emph{Twisting}\index{twisting}. For $x, y \in G$
\begin{displaymath} \bar x\bar y=\tau(x,y)\overline {xy}
\end{displaymath}
 where $\tau \colon G\times G\longrightarrow
R^\times$ and  $R^\times$ denotes the group of units of $R$.\\
\indent \emph{Action}\index{action}. For $x\in G$ and $r\in R$
\begin{displaymath} \bar xr={}^{\sigma(x)}r\bar x
\end{displaymath}
 where $\sigma \colon G\rightarrow \Aut (R)$,
 $\Aut(R)$ denotes the group of automorphisms of $R$ and  ${}^{\sigma(x)}r$ denotes the image of $r$ by $\sigma(x)$.
Hence if $\sum_{x\in G}a_x\bar{x}, \sum_{x\in G}b_x\bar{x}\in RG$,
then
\begin{equation}\label{eq:product}
\sum_{x\in G}\Big(\sum_{yz=x}a_y{}^{\sigma(y)}b_z
\tau(y,z)\Big)\bar{x}.
\end{equation}

We stress that neither $\sigma$ nor $\tau$ need to preserve any kind
of structure.

 If $H$ is a subgroup of $G$, then
$ RH=\{\eta \in RG\mid \supp \eta\subseteq H\}$ is the naturally
embedded sub-crossed product.

Crossed products  do not have a natural basis. If \mbox{$d\colon
G\rightarrow R^\times$} assigns to each element $x\in G$ a unit
$d_x$, then \mbox{$\widetilde{G}=\{\tilde{x}=d_x\bar{x}\mid x\in
G\}$} is another $R$-basis for $RG$ which still exhibits the basic
crossed product. After a change of basis if necessary, we will
always suppose that $1_{RG}=\bar{1}$.

A crucial property of crossed products  is the following. If $N$ is
a normal subgroup of $G$ then $RG=RN\frac{G}{N}$, where the latter
is some crossed product  of the group $G/N$ over the ring $RN$.

If $R$ is any ring and $C$ denotes an infinite cyclic group then any
crossed product $RC\cong R[x,x^{-1};\alpha]$ for a suitable ring
automorphism $\alpha \colon R\to R$ given by conjugation by $x$.

We refer the reader to  \cite{Passman1} for further details on
crossed products. If $k$ is a commutative field and $R$ is a
$k$-algebra, then the construction of $RG$ is a particular case of a
Hopf algebra crossed product, see \cite[Chapter 7]{Montgomery}

\medskip

We say that a group $G$ is an \emph{orderable group} if there exists
a total order $<$ on $G$ which is compatible with the product
defined on $G$, that is,  $x<y$ implies that $zx<zy$ and $xz<yz$ for
all $x,y,z\in G$. In this event $(G,<)$ is an \emph{ordered group}.

\medskip

Given a ring $R$, an ordered group $(G,<)$ and a crossed product
group ring $RG$, the \emph{Malcev-Neumann series ring} $R((G,<))$
consists of the formal sums
$$f=\sum_{x\in G}a_x\bar{x},$$ such that $\supp f=\{x\in G\mid a_x\neq 0\}$ is a
well-ordered subset of $G$, the sum is defined componentwise and the
product is defined as in \eqref{eq:product}.

It was proved independently by A.I. Malcev \cite{Malcev} and B.H.
Neumann \cite{Neumann} that if $R$ is a field then $R((G,<))$ is
also a field. Let $f=\sum_{x\in G}a_x\bar{x}$ be a
nonzero series in $R((G,<))$. Set $x_0=\min\{x\in G\mid x\in\supp f\}$
and $g=a_{x_0}\bar{x}_0-f$. Observe that $\supp
g(a_{x_0}\bar{x}_0)^{-1}\subseteq\{x\in G\mid x>1\}$. As in
\cite[Corollary~14.23]{Lam1}, it can bee seen that $\sum_{m\geq
0}(g(a_{x_0}\bar{x}_0)^{-1})^m$ is a well-defined element in
$R((G,<))$, that is, for each $x\in G$ the set $L_x=\{m\geq 0\mid
x\in \supp (g(a_{x_0}\bar{x}_0)^{-1})^m\}$ is finite. Then
$$f^{-1}=(a_{x_0}\bar{x}_0)^{-1}\sum_{m\geq 0}\big(g(a_{x_0}\bar{x}_0)^{-1}\big)^m$$

\subsection{Universal fields, matrix localization and the free field} \label{localitzacio}
See \cite[Chapter 4]{Cohnskew} for the missing details. Let $R$ be a ring. An
\emph{epic $R$-field} is a morphism of rings $\iota\colon
R\rightarrow E$ with $E$ a field which is rationally generated by
the image of $\iota$. If $\iota$ is injective, it is called a
\emph{field of fractions} of $R$. It is known that epic $R$-fields
(objects) together with specializations (morphisms) form a category.
If there exists an initial object in this category it is called a
\emph{universal field}. If it exists, it is unique up to
isomorphism.

Observe that an endomorphism $f\colon F\to F$  in the category  of
epic $R$-fields must be an automorphism of $R$-rings. In particular,
epic $R$-fields are isomorphic if and only if they are isomorphic as
$R$-rings.

Let $R$ be a ring and let $E$ be an epic $R$-field with morphism
$\varphi \colon R\to E$. It was proved by Cohn that  the set
$\mathcal{P}_E$ of all square matrices with entries in $R$ and such
that its image via $\varphi$ is not invertible  in $E$ form a
\emph{prime matrix ideal of $R$} and the localization of $R$ at the
set of all square matrices with entries in $R$ such that its image
via $\varphi$ is invertible is a local ring, denoted by
$R_{\mathcal{P}_E}$, such that the canonical map
$R_{\mathcal{P}_E}\to E$ induces an isomorphism between  the residue
field of $R_{\mathcal{P}_E}$ and $E$. Let us call $\mathcal{P}_E$
the associated prime matrix ideal to the epic $R$-field $E$.

This correspondence between   epic $R$-fields and prime matrix
ideals of a ring $R$ is in fact bijective. If  $\mathcal{P}$ is a
prime matrix ideal of $R$ then  $R_\mathcal{P}$ is a local ring, its
residue field $E$ is an epic $R$-field and
$\mathcal{P}_E=\mathcal{P}$.

\begin{theo} Let $R$ be a ring, and let $F_1$ and $F_2$ be epic $R$-fields with associated prime matrix ideals $\mathcal{P}_1$ and $\mathcal{P}_2$, respectively. Then the following statements are equivalent:
\begin{enumerate}
\item[(i)] There exists a specialization $F_1\to F_2$.
\item[(ii)]  $\mathcal{P}_1\subseteq \mathcal{P}_2$.
\item[(iii)]  The canonical localization homomorphism
$R\to R_{\mathcal{P}_1}$ factors through the canonical
localization homomorphism $R\to R_{\mathcal{P}_2}$.
\end{enumerate}
In particular, if $\mathcal{P}_2$ is a minimal prime matrix ideal,
then $\mathcal{P}_1= \mathcal{P}_2$ and $F_1$ is isomorphic to
$F_2$. \end{theo}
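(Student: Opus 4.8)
The plan is to derive all three equivalences, and the final clause, purely formally from the correspondence between epic $R$-fields and prime matrix ideals recalled above, together with the universal property of the matrix localization: $R_{\mathcal{P}}$ is the universal $R$-ring in which every square matrix over $R$ outside $\mathcal{P}$ becomes invertible, it is local, and its residue field is the associated epic $R$-field. I would run the cycle (i)$\Rightarrow$(ii)$\Leftrightarrow$(iii)$\Rightarrow$(i), treating the two arrows involving specializations separately, since that is where the actual content lies.

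First, the equivalence (ii)$\Leftrightarrow$(iii). For (ii)$\Rightarrow$(iii), if $\mathcal{P}_1\subseteq\mathcal{P}_2$ then every matrix inverted in passing to $R_{\mathcal{P}_2}$, being outside $\mathcal{P}_2$, is also outside $\mathcal{P}_1$ and hence already invertible in $R_{\mathcal{P}_1}$; the universal property of $R_{\mathcal{P}_2}$ then yields a unique $R$-ring homomorphism $\theta\colon R_{\mathcal{P}_2}\to R_{\mathcal{P}_1}$ with $\theta\circ\lambda_2=\lambda_1$, where $\lambda_i\colon R\to R_{\mathcal{P}_i}$ are the canonical maps. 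This is exactly the required factorization. Conversely, for (iii)$\Rightarrow$(ii), given such a $\theta$, any matrix $A$ over $R$ whose image $\lambda_2(A)$ is invertible in $R_{\mathcal{P}_2}$ has image $\lambda_1(A)=\theta(\lambda_2(A))$ invertible in $R_{\mathcal{P}_1}$, since ring homomorphisms preserve inverses; hence $A\notin\mathcal{P}_1$. Thus the complement of $\mathcal{P}_2$ is contained in that of $\mathcal{P}_1$, i.e. $\mathcal{P}_1\subseteq\mathcal{P}_2$.

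Next I would bring in (i), using that a specialization $F_1\to F_2$ is, by definition, a local subring $R_0$ with $R\subseteq R_0\subseteq F_1$ generating $F_1$ as a field, together with an $R$-ring isomorphism of its residue field $R_0/\mathfrak{m}$ with $F_2$. For (i)$\Rightarrow$(ii) I would invoke the local-ring criterion that a square matrix over a local ring is invertible if and only if its reduction modulo $\mathfrak{m}$ is invertible: if $A\in\mathcal{P}_1$, then $A$ is singular over $F_1$, so it is not invertible over $R_0\subseteq F_1$, whence its reduction is singular over $F_2\cong R_0/\mathfrak{m}$; reading this reduction through the structure map $R\to F_2$ gives $A\in\mathcal{P}_2$. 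For (iii)$\Rightarrow$(i) I would take the factoring map $\theta$, compose it with the residue map $R_{\mathcal{P}_1}\to F_1$ to obtain $\psi\colon R_{\mathcal{P}_2}\to F_1$, and set $R_0=\psi(R_{\mathcal{P}_2})\subseteq F_1$. Since $R_{\mathcal{P}_2}$ is local and $\ker\psi$ is a proper ideal, $\ker\psi\subseteq\mathfrak{m}_2$, so $R_0\cong R_{\mathcal{P}_2}/\ker\psi$ is local with residue field $R_{\mathcal{P}_2}/\mathfrak{m}_2=F_2$; and $R_0$ contains the image of $R$, hence generates $F_1$ because $F_1$ is rationally generated by $R$. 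This exhibits the specialization.

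The main obstacle is precisely this last construction: turning the bare inclusion $\mathcal{P}_1\subseteq\mathcal{P}_2$ into an honest field-theoretic specialization, i.e. producing the local subring of $F_1$ whose residue field is $F_2$ and checking the local-ring invertibility bookkeeping. All of this rests on the properties of $R_{\mathcal{P}}$ (locality, residue field, universal property) established in Cohn's theory, which I would cite from \cite[Chapter~4]{Cohnskew} rather than reprove. Finally, the ``in particular'' clause is immediate: under the equivalent conditions we have $\mathcal{P}_1\subseteq\mathcal{P}_2$, so if $\mathcal{P}_2$ is minimal among prime matrix ideals the prime matrix ideal $\mathcal{P}_1$ cannot be strictly smaller and must equal $\mathcal{P}_2$; by the bijectivity of the correspondence, equal prime matrix ideals correspond to $R$-isomorphic, hence isomorphic, epic $R$-fields, giving $F_1\cong F_2$.
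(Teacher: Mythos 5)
Your proposal is correct; it is the standard derivation of this theorem from Cohn's correspondence between epic $R$-fields and prime matrix ideals, using the universal property of the matrix localization $R_{\mathcal{P}}$, the locality of $R_{\mathcal{P}}$, and the fact that a square matrix over a local ring is invertible if and only if its residue is. The paper itself gives no proof of this statement---it is quoted as background from \cite[Chapter~4]{Cohnskew}---so there is nothing to compare against beyond noting that your argument is precisely the one found in that reference.
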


Note also that the third statement in the theorem above implies that
if $F_2$ is given by universal localization of $R$ (at a prime
matrix ideal of $R$), then $F_2$ is isomorphic  to $F_1$. Therefore
one can deduce that   the prime matrix ideal associated to $F_2$ is
a minimal prime matrix ideal.

All prime matrix ideals contain the set of non-full matrices. The set $\mathcal{P}$ of non-full matrices is a prime matrix ideal, hence the least prime matrix ideal, if and only if $R$ is a Sylvester domain and, in this case,  $R_\mathcal{P}$ is a field and hence, it is a universal field of fractions.  A free algebra (or more generally a semifir) is a Sylvester domain. The universal field of fractions of a free algebra is usually called a
\emph{free field}.

Let $G$ be a free group on a nonempty set $X$, $k$ a field and $kG$
a crossed product. Lewin proved that the universal field of
fractions of $kG$ (and of $\freealgebra kX$) is the field of
fractions of $kG$ inside $k((G,<))$ for any total order $<$ on $G$
such that $(G,<)$ is an ordered group, see \cite{Lewin} and the
remark in \cite[Section~2]{LewinLewin}. An easier proof of this fact
was given by C. Reutenauer~\cite{Reutenauerseries} (or see
\cite{Tesis}). Observe that if $N$ is a subgroup of $G$ (or
$Y\subseteq X$), then the universal field of fractions of $kN$
(respectively $\freealgebra kY$) is the field of fractions of $kN$
($\freealgebra kY$) inside $k((G,<))$.

\section{Inversion height}

Suppose that  $\iota \colon R\hookrightarrow E$ is an embedding of a
domain $R$ into a field $E$.

Set $E_{\iota}(-1)=\emptyset$, $E_{\iota}(0)=R$, and we define
inductively for $n\geq0$:
 $$E_{\iota}(n+1)=\begin{array}{c}\textrm{\small subring of } E
\\ \textrm{\small generated by}\end{array} \left\{r,
s^{-1}\mid r,s\in E_\iota(n),\ s\neq0\right\}.$$ Then
$E_\iota=\operatornamewithlimits{\bigcup}\limits_{n=0}^\infty
E_\iota(n)$ is the \emph{field of fractions of $R$ inside $E$}. That
is, $E_\iota$ is the field rationally generated by $R$ inside $E$
or, equivalently, the intersection of all subfields of $E$ that
contain $R$.

 We define $\h_\iota (R)$, the \emph{inversion height of $R$}
(inside $E$), as $\infty$ if there is no $n\in\naturals$ such that
$E_\iota(n)$ is a field. Otherwise,
$$\h_\iota (R)=\min\{n\mid E_\iota(n) \textrm{ is a field}\}.$$
Notice that if $\h_\iota(R)=n$, then $E_\iota(m)=E_\iota(n)$ for all
$m\geq n$.

 Given an integer $n\geq 0$, we say that an element $f\in E_\iota$
has  \emph{inversion height} $n$ if $f\in E_\iota(n)\setminus
E_{\iota}(n-1),$ and we write $\h _\iota (f)=n$. In other words,
$\h_\iota(f)$ says how many nested inversions are needed to express
an element of $E_\iota$ from elements of $R$, and $\h _\iota(R)$ is
the supremum of all $\h_\iota(f)$ with $f\in E_\iota$.

\medskip

We now give some easy remarks that will be used throughout.

\begin{rems}\label{rems:usedalot}
Let $\iota \colon R\hookrightarrow E$ be an embedding of a domain
$R$ in a field $E$.

\begin{enumerate}[(a)]
\item If $\kappa\colon E\hookrightarrow L$ is an embedding  in a field $L$, then $\kappa \iota$ is an embedding  such that $E_\iota(n)=L_{\kappa\iota}(n)$ for all
$n\geq -1$. Therefore $E_\iota=L_{\kappa\iota}$,
$\h_\iota(R)=\h_{\kappa\iota}(R)$, and
$\h_\iota(f)=\h_{\kappa\iota}(f)$ for all $f\in L_{\kappa\iota}$.

\item On the other hand, if $S$ is a subring of $R$ and we consider the embedding
$\varepsilon=\iota_{\mid S}\colon S\hookrightarrow E$, then
$E_\varepsilon(n)\subseteq E_\iota(n)$, and thus $\h_\iota(f)\leq
\h_\varepsilon(f)$ for all $f\in E_\varepsilon$.
\end{enumerate}
\end{rems}

One of the  problems when dealing with inversion height is the fact
that  we cannot be more accurate in Remarks~\ref{rems:usedalot}(b).
That is, we may know $\h_\varepsilon(f)$ for some $f$ or even
$h_\varepsilon(S)$, but usually it is not useful if we want to
compute $\h_\iota(f)$ or $\h_\iota(R)$. Our key  results on
inversion height (Propositions~\ref{prop:heightskewpolynomial} and
\ref{prop:heightskewpolynomial2}) state that
$\h_\varepsilon(f)=\h_\iota(f)$ in certain important cases.

\begin{lem}\label{lem:finiteset} Let $k$ be a commutative field,
and let $R$ be a $k$-algebra with a fixed embedding $\iota\colon
R\hookrightarrow E$ into a field $E$. If $f\in E_\iota$ satisfies
that $h_\iota (f)\le m$, then there exists a finitely generated
$k$-subalgebra $S$ of $R$ such that $f\in E_{\varepsilon}$ and
$h_{\varepsilon}(f)\le m$ where $\varepsilon=\iota_{\mid S} \colon
S\to E$.
\end{lem}

\begin{proof} The proof is by induction on $m$.
For $m=0$ the claim is clear. Suppose that the claim is true for
$m-1\geq 0.$ Since $f\in E_\iota(m),$  $f=\sum\limits_{j=1}^r
f_{1j}\cdots f_{l_jj}$ where, for each $i,j$, either $f_{ij}\in
E_{\iota}(m-1)$ or $f_{ij}$ is the inverse of some nonzero element
in $E_{\iota}(m-1).$ The induction hypothesis implies that there
exist $S_{1j},\dotsc,S_{l_jj}$ finitely generated $k$-subalgebras of
$R$ such that $f_{ij}\in E_{\varepsilon _{ij}}$, where $\varepsilon
_{ij}=\iota_{\mid S_{ij}}\colon S_{ij}\to E$, and $h_{\varepsilon
_{ij}}(f_{ij})\leq m.$ Let $S$ be the smallest subalgebra of $R$
containing $S_{ij}$ for all $i$, $j$, and let
$\varepsilon=\iota_{\mid S} \colon S\to E$. Then $f\in
E_{\varepsilon}$, and  $h_{\varepsilon} (f)\leq m$ because
$E_{\varepsilon_{ij}}(m)\subseteq E_{\varepsilon}(m).$ This proves
the result.
\end{proof}

\begin{lem}\label{lem:automorphism} Let  $S$ be a domain with
a fixed embedding $\varepsilon \colon S\hookrightarrow F$ into a
field $F$. Let $\alpha \colon F\to F$ be a morphism of rings and
$\delta\colon F\rightarrow F$ be an $\alpha$-derivation.
\begin{enumerate}[\rm(i)]
\item If
 $\alpha(S)\subseteq S$ and $\delta(S)\subseteq S$, then
$$\alpha (F_\varepsilon(n))\subseteq F_\varepsilon(n)\quad
\textrm{and} \quad \delta(F_\varepsilon(n))\subseteq
F_\varepsilon(n)$$
 for all
$n\ge 0$. Hence, $F_\varepsilon(n)((y;\alpha,\delta))\hookrightarrow
F_\varepsilon ((y;\alpha,\delta))$ and
$F_\varepsilon(n)((x;\alpha))\hookrightarrow
F_\varepsilon((x;\alpha))$.

\item If $\alpha (S)=S$, then $\alpha$ induces an automorphism of
$F_\varepsilon(n)$  for each $n\ge 0$, and thus it induces an
automorphism on $F_\varepsilon$.
\end{enumerate}
\end{lem}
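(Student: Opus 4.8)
The plan is to prove both parts by induction on $n$, exploiting the recursive definition of $F_\varepsilon(n)$. First I would handle part (i). The base case $n=0$ is immediate: $F_\varepsilon(0)=S$, and the hypotheses $\alpha(S)\subseteq S$ and $\delta(S)\subseteq S$ are exactly what is assumed. For the inductive step, I would recall that $F_\varepsilon(n+1)$ is generated as a ring by elements of the form $r$ and $s^{-1}$ where $r,s\in F_\varepsilon(n)$ with $s\neq 0$. Since $\alpha$ is a ring morphism and $\delta$ is additive, to show $\alpha(F_\varepsilon(n+1))\subseteq F_\varepsilon(n+1)$ and $\delta(F_\varepsilon(n+1))\subseteq F_\varepsilon(n+1)$ it suffices to check that both maps send the generators back into $F_\varepsilon(n+1)$. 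For $r\in F_\varepsilon(n)$ this is the induction hypothesis together with $F_\varepsilon(n)\subseteq F_\varepsilon(n+1)$. The crucial point is the inverses: I must verify the formulas
\begin{equation*}
\alpha(s^{-1})=\alpha(s)^{-1}\quad\text{and}\quad \delta(s^{-1})=-\alpha(s)^{-1}\delta(s)s^{-1}.
\end{equation*}
The first is automatic because $\alpha$ is a ring morphism (so $\alpha(s)\alpha(s^{-1})=\alpha(1)=1$, and $\alpha(s)$ is nonzero in the field $F$). The second follows by applying $\delta$ to the identity $ss^{-1}=1$ and using the Leibniz rule $\delta(ss^{-1})=\delta(s)s^{-1}+\alpha(s)\delta(s^{-1})$, then solving for $\delta(s^{-1})$. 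Both right-hand sides lie in $F_\varepsilon(n+1)$ by the induction hypothesis (which gives $\alpha(s),\delta(s)\in F_\varepsilon(n)$) together with the closure of $F_\varepsilon(n+1)$ under products and inverses of its elements.

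Once the containments $\alpha(F_\varepsilon(n))\subseteq F_\varepsilon(n)$ and $\delta(F_\varepsilon(n))\subseteq F_\varepsilon(n)$ are established, the asserted embeddings of series rings follow formally. Indeed, $F_\varepsilon(n)$ is a subring of the field $F_\varepsilon$ carrying the restricted maps $\alpha|_{F_\varepsilon(n)}$ and $\delta|_{F_\varepsilon(n)}$, so the series ring $F_\varepsilon(n)((y;\alpha,\delta))$ is defined by the commutation rule \eqref{eq:commutationruleseries}, and the natural inclusion into $F_\varepsilon((y;\alpha,\delta))$ respects the componentwise addition and the multiplication rule because the structure maps on the subring are the restrictions of those on $F_\varepsilon$. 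The same reasoning applies verbatim to $F_\varepsilon(n)((x;\alpha))\hookrightarrow F_\varepsilon((x;\alpha))$ in the case $\delta=0$.

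For part (ii), I would again argue by induction, now using the stronger hypothesis $\alpha(S)=S$. From part (i) we already have $\alpha(F_\varepsilon(n))\subseteq F_\varepsilon(n)$, so it remains to prove surjectivity onto each $F_\varepsilon(n)$. The base case is the assumption $\alpha(S)=S$. For the inductive step I would observe that $\alpha^{-1}$ is a well-defined ring morphism $F\to F$ (since $\alpha$ is injective on the field $F$ and, by the surjectivity hypothesis, invertible as an automorphism of $F$ once $F$ is the field of fractions), and that $\alpha^{-1}(S)=S$ as well. Applying part (i) to $\alpha^{-1}$ gives $\alpha^{-1}(F_\varepsilon(n))\subseteq F_\varepsilon(n)$, which combined with the containment for $\alpha$ yields $\alpha(F_\varepsilon(n))=F_\varepsilon(n)$, i.e.\ $\alpha$ restricts to an automorphism of each $F_\varepsilon(n)$. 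Passing to the union over $n$ gives the induced automorphism of $F_\varepsilon$. The main point requiring care is the correct formulation of $\alpha^{-1}$: since $\alpha$ is a priori only a ring morphism $F\to F$, I must confirm that $\alpha(S)=S$ forces $\alpha$ to be bijective on all of $F_\varepsilon$, which is precisely what the symmetric application of part (i) delivers; this is the step I expect to be the only genuine subtlety in an otherwise routine induction.
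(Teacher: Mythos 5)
Your part (i) is essentially the paper's proof: base case from the hypotheses, inductive step via $\alpha(s^{-1})=\alpha(s)^{-1}$ and $\delta(s^{-1})=-\alpha(s)^{-1}\delta(s)s^{-1}$ applied to the ring generators of $F_\varepsilon(n+1)$, then the formal compatibility of the series rings. Nothing to add there.

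For part (ii) you take a different route, and it contains a misstatement that needs repair. The paper does not invert $\alpha$ at all: it argues by induction that $\alpha\colon F_\varepsilon(n-1)\to F_\varepsilon(n-1)$ is onto, and then checks that every ring generator of $F_\varepsilon(n)$ --- each $r\in F_\varepsilon(n-1)$ and each $s^{-1}$ with $s\in F_\varepsilon(n-1)\setminus\{0\}$, the latter being $\alpha(s')^{-1}=\alpha(s'^{-1})$ for the preimage $s'$ of $s$ --- already lies in the subring $\alpha(F_\varepsilon(n))$, whence $F_\varepsilon(n)\subseteq\alpha(F_\varepsilon(n))$. Your route instead applies part (i) symmetrically to $\alpha^{-1}$. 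The problem is your claim that ``$\alpha^{-1}$ is a well-defined ring morphism $F\to F$'': under the lemma's hypotheses $\alpha$ is injective on $F$ (any ring morphism from a field is) but need not be surjective on $F$, and $F$ is \emph{not} assumed to be the field of fractions of $S$ --- it is an arbitrary field containing $S$, with $F_\varepsilon$ possibly a proper subfield. For instance $S=k$, $F=k(t)$, $\alpha|_k=\mathrm{id}$, $\alpha(t)=t^2$ satisfies $\alpha(S)=S$ while $\alpha(F)=k(t^2)\subsetneq F$, so $\alpha^{-1}$ is not defined on all of $F$. Your argument is salvageable with one observation you omit: $\alpha(F)$ is a subfield of $F$ containing $\alpha(S)=S$, hence contains $F_\varepsilon$ (the intersection of all such subfields), so $\alpha^{-1}$ is defined on $F_\varepsilon$, satisfies $\alpha^{-1}(S)=S$, and the induction of part (i) runs verbatim for $\alpha^{-1}|_{F_\varepsilon}$ to give $\alpha^{-1}(F_\varepsilon(n))\subseteq F_\varepsilon(n)$, i.e.\ $F_\varepsilon(n)\subseteq\alpha(F_\varepsilon(n))$. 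With that fix your version works and is essentially equivalent in content to the paper's; the paper's formulation avoids the issue entirely by never leaving the image of $\alpha$.
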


\begin{proof}
(i) The hypothesis ensures that $\alpha\left(
F_\varepsilon(0)\right)\subseteq F_\varepsilon(0)$ and $\delta\left(
F_\varepsilon(0)\right)\subseteq F_\varepsilon(0)$. Since for each
$f\in F\setminus\{0\}$, $\alpha(f^{-1})=\alpha(f)^{-1}$ and
$\delta(f^{-1})=-\alpha(f)^{-1}\delta(f)f^{-1}$ cf. Lemma
\ref{lem:uniqueextensionderivation}, using the definition of
$F_\varepsilon(n)$, it is easy to prove the first claim inductively.

The second claim follows from the first and the commutativity of the
following diagram
$$\xymatrix{ F_\varepsilon(n)[[y;\alpha,\delta]]\ar@{^{(}->}[r]^\eta \ar@{^{(}->}[d]  &
F_\varepsilon [[y;\alpha,\delta]]\ar@{^{(}->}[d]\\
F_\varepsilon(n)((y;\alpha,\delta)) \ar@{^{(}->}[r]^\nu &
F_\varepsilon((y;\alpha,\delta))}$$ where the vertical arrows are
given by the right Ore localization at the powers of $y$, $\eta$ is
induced from $F_\varepsilon(n)\hookrightarrow F_\varepsilon$, and
$\nu$ is given by the universal property of Ore localization.
Similarly for $F_\varepsilon(n)((x;\alpha))$.

(ii) Assume that $\alpha \colon S\to S$ is an automorphism. We
prove, by induction on $n$, that  $\alpha \colon F_\varepsilon(n)\to
F_\varepsilon(n)$ is an isomorphism for each $n\ge 0$. Our
hypothesis ensures the case $n=0$. Assume that $n>0$ and  $\alpha
\colon F_{\varepsilon}(n-1)\to F_{\varepsilon}(n-1)$ is onto, hence
an automorphism. As for any $r\in F_{\varepsilon}(n-1)\setminus
\{0\}$, $\alpha (r^{-1})=\alpha(r)^{-1}\in F_{\varepsilon}(n)$ and
$F_{\varepsilon}(n-1)= \alpha \left(F_{\varepsilon}(n-1)\right)$, we
deduce that all the ring generators of $F_{\varepsilon}(n)$ are in
$\alpha (F_{\varepsilon}(n))$, which implies that $\alpha \colon
F_\varepsilon(n)\to F_\varepsilon(n)$ is onto.
\end{proof}

\begin{prop}\label{prop:heightskewpolynomial}
Let $S$ be a domain, let $\alpha\colon S\rightarrow S$ be an
injective ring endomorphism, and let $\delta\colon S\rightarrow S$
be  an $\alpha$-derivation. Set  $R=S[x;\alpha,\delta]$. Suppose
that $\varepsilon\colon S\hookrightarrow F$ is a field of fractions
of $S$, that $\alpha$ and $\delta$ extend to $F$ and that
\begin{equation}\label{eq:inversionheightskewpolynomial}
\alpha\big(F_\varepsilon(n)\setminus
F_\varepsilon(n-1)\big)\subseteq F_\varepsilon(n)\setminus
F_\varepsilon(n-1),
\end{equation}
for each integer $n\geq0$. Let  $E=F(x;\alpha,\delta)$, and let
$\iota\colon R\hookrightarrow E$ be the natural embedding of $R$ in
$E$. Consider the field of skew Laurent series
$F((y;\alpha,\delta))$. Then

\begin{enumerate}[\rm(i)]
\item For each $n\geq 0$,
$E_{\iota}(n)\subseteq F_\varepsilon(n)((y;\alpha,\delta))$.

\item Let $f\in F$. If $\h_\varepsilon(f)=n$, then
$\h_\iota(f)=n$.
\item $\h_{\iota}(R)\geq \h_{\varepsilon}(S)$.
\end{enumerate}
\end{prop}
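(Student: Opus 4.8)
The plan is to transport the whole inversion-height filtration into the skew Laurent series field $L=F((y;\alpha,\delta))$, where $y=x^{-1}$, through the embedding $\kappa\colon E=F(x;\alpha,\delta)\hookrightarrow L$ constructed before \eqref{eq:commutationruleseries}. By Remarks~\ref{rems:usedalot}(a) the filtration is preserved, $E_\iota(n)=L_{\kappa\iota}(n)$ for every $n$, so it suffices to locate the image of each $E_\iota(n)$ inside $L$. The natural candidate is the subring $F_\varepsilon(n)((y;\alpha,\delta))\subseteq L$, which is well defined and sits inside $F_\varepsilon((y;\alpha,\delta))$ by Lemma~\ref{lem:automorphism}(i), since $\alpha$ and $\delta$ map every $F_\varepsilon(n)$ into itself.

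Statement (i) I would prove by induction on $n$. The case $n=0$ is clear: under $x\mapsto y^{-1}$ the ring $R=S[x;\alpha,\delta]$ maps into $S((y;\alpha,\delta))=F_\varepsilon(0)((y;\alpha,\delta))$. For the inductive step, the ring generators of $E_\iota(n+1)$ are the elements of $E_\iota(n)$ and the inverses of its nonzero elements; by the inductive hypothesis all such elements already lie in $F_\varepsilon(n)((y;\alpha,\delta))$, so everything reduces to the single claim that the inverse of a nonzero $s\in F_\varepsilon(n)((y;\alpha,\delta))$ belongs to $F_\varepsilon(n+1)((y;\alpha,\delta))$. To see this I would write $s$ in the form $y^{m_0}(1+u)\,a\,y^{-r}$, where $a\in F_\varepsilon(n)$ is the lowest nonzero coefficient (so $a^{-1}\in F_\varepsilon(n+1)$) and $u$ has strictly positive $y$-order; factoring the coefficient $a$ out \emph{on the right} is what avoids any appeal to surjectivity of $\alpha$. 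Then $s^{-1}=y^{r}a^{-1}(1+u)^{-1}y^{-m_0}$ with $(1+u)^{-1}=\sum_{j\ge0}(-u)^{j}$, a $y$-adically convergent geometric series whose coefficients stay in $F_\varepsilon(n+1)$ because that set is closed under the ring operations and under $\alpha,\delta$. This places $s^{-1}$ in $F_\varepsilon(n+1)((y;\alpha,\delta))$ and closes the induction.

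For statement (ii), the inequality $\h_\iota(f)\le\h_\varepsilon(f)=n$ follows from Remarks~\ref{rems:usedalot}(a) and~(b), using that $\varepsilon$ factors as $S\hookrightarrow F\hookrightarrow E$ and that $S\subseteq R$. For the reverse inequality I would argue by contradiction: if $f\in E_\iota(n-1)$ then (i) gives $f\in F_\varepsilon(n-1)((y;\alpha,\delta))$, so $f=\big(\sum_{m\ge0}y^m a_m\big)y^{-r}$ with all $a_m\in F_\varepsilon(n-1)$. Comparing the lowest-order terms of $fy^{r}$ and of $\sum_{m\ge 0}y^m a_m$ — the lowest term of $fy^{r}$ being $y^{r}\alpha^{r}(f)$ — yields $\alpha^{r}(f)=a_r\in F_\varepsilon(n-1)$. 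This is exactly the point where hypothesis \eqref{eq:inversionheightskewpolynomial} is indispensable: it says $\h_\varepsilon\circ\alpha=\h_\varepsilon$, hence $\h_\varepsilon(f)=\h_\varepsilon(\alpha^{r}(f))\le n-1$, contradicting $\h_\varepsilon(f)=n$. Therefore $f\notin E_\iota(n-1)$ and $\h_\iota(f)=n$.

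Statement (iii) is then immediate from (ii): choosing $f\in F=F_\varepsilon$ with $\h_\varepsilon(f)=\h_\varepsilon(S)$ (or of arbitrarily large finite height when $\h_\varepsilon(S)=\infty$) gives $f\in E_\iota$ with $\h_\iota(f)=\h_\varepsilon(f)$, whence $\h_\iota(R)\ge\h_\varepsilon(S)$. The main obstacle is the inversion computation in (i): one must keep every coefficient of $s^{-1}$ inside $F_\varepsilon(n+1)$ while $\alpha$ is only injective and $F_\varepsilon(n)$ is merely a domain for $n<\h_\varepsilon(S)$; the right-hand factorization and the geometric-series expansion are the devices that make this work. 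The extraction of the leading coefficient $\alpha^{r}(f)$ in (ii), together with the height-preservation hypothesis \eqref{eq:inversionheightskewpolynomial}, is the second delicate point.
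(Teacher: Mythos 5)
Your proposal is correct and follows essentially the same route as the paper's proof: the same right-hand factorization $f=y^{m_0}(1+u)a_{m_0}y^{-r}$ with the geometric-series inversion controlled by Lemma~\ref{lem:automorphism}(i) for part (i), and the same extraction of the leading coefficient $\alpha^{r}(f)$ combined with hypothesis \eqref{eq:inversionheightskewpolynomial} for part (ii). The only differences are cosmetic — you make explicit the transport via Remarks~\ref{rems:usedalot}(a) and the upper bound $\h_\iota(f)\le\h_\varepsilon(f)$, which the paper leaves implicit.
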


\begin{proof}
To simplify the notation, let
$\mathcal{L}_n=F_\varepsilon(n)((y;\alpha,\delta))$ for each $n\geq
0$. By Lemma~\ref{lem:automorphism}(i), we may consider
$\mathcal{L}_n$ as a subring of $F((y;\alpha,\delta))$.

(i) We proceed by induction on $n$. For $n=0$, observe that
$E_\iota(0)=S[x;\alpha,\delta]$. Given $f=a_0+a_1x+\dotsb+a_nx^n\in
S[x;\alpha,\delta]$, it can be expressed as
$(a_0y^n+a_1y^{n-1}+\dotsb+a_n)y^{-n}$. Now $a_0y^n,\dotsc,a_n\in
S[[y;\alpha,\delta]]$ by \eqref{eq:commutationruleseries}. Thus
$E_\iota(0)=S[x;\alpha,\delta]\subseteq
S((y;\alpha,\delta))=\mathcal{L}_0$. Suppose that the result holds
for $n\geq 0$. Let $f\in E_\iota(n)\setminus\{0\}$. Express $f$ as
an element in $\mathcal{L}_n$,
$f=(\sum\limits_{m\geq0}y^ma_m)y^{-r}$. Suppose that $m_0$ is the
first natural such that $a_{m_0}\neq 0$. Then $f$ can be written as
$y^{m_0}\big(1-\sum\limits_{m\geq 1}y^mb_m\big)a_{m_0}y^{-r}$ where
$b_m=-a_{m+m_0}a_{m_0}^{-1}$. Hence
\begin{equation}\label{eq:inverse}
f^{-1}=y^ra_{m_0}^{-1}\left(\sum_{s\geq0}\left(\sum_{m\geq
1}y^mb_m\right)^{\! \!s}\, \right)y^{-m_0}.
\end{equation}
Observe that for each $s\geq 0$, the terms from $\big(\sum_{m\geq 0}
y^mb_m\big)^t$ with $t>s$ do not contribute to the coefficient of
$y^s$, and the coefficient of $y^s$ belongs to $F_\varepsilon(n+1)$
by Lemma~\ref{lem:automorphism}(i). Hence
$\sum\limits_{s\geq0}\big(\sum\limits_{m\geq1}y^mb_m\big)^s\in
F_\varepsilon(n+1)[[y;\alpha,\delta]]$. Now it is easy to prove that
$f^{-1}\in\mathcal{L}_{n+1}$.

Since $E_\iota(n)\subseteq E_{\iota}(n+1)$, we have shown that the
generators of $E_\iota(n+1)$ are contained in the ring
$\mathcal{L}_{n+1}$. Therefore
$E_\iota(n+1)\subseteq\mathcal{L}_{n+1}$, as desired.

(ii) If $S$ is a field, the result is clear. So suppose that $S$ is
not a field and let $f\in F$ with $f\in F_\varepsilon(n+1)\setminus
F_\varepsilon (n)$ for some $n\geq 0$. If $f\in\mathcal{L}_n$, i.e.
$f=\big(\sum_{m\geq 0}y^ma_m\big)y^{-r}$ with $a_m\in
F_\varepsilon(n)$, then $fy^{r}=\sum_{m\geq 0}y^ma_m$. On the one
hand $fy^r$ is a series of the form $y^r\alpha^r(f)+\sum_{m\geq
1}y^{r+m}b_m$. Since $a_m\in F_\varepsilon(n)$ for
all $m\geq 0$ this is a contradiction because $a_r=\alpha^r(f)\in
F_\varepsilon(n+1)\setminus F_\varepsilon(n)$ by the hypothesis
\eqref{eq:inversionheightskewpolynomial}.

(iii) follows from (ii).
\end{proof}

Note that if $\alpha$ is an automorphism, then
\eqref{eq:inversionheightskewpolynomial} in
Proposition~\ref{prop:heightskewpolynomial} holds.

\begin{prop}\label{prop:heightskewpolynomial2}
Let $S$ be a domain, $\alpha\colon S\rightarrow S$ be an
automorphism and $R=S[x,x^{-1};\alpha]$. Suppose that
$\varepsilon\colon S\hookrightarrow F$ is a field of fractions of
$S$ and that $\alpha$
 extends to $F$.
Let  $E=F(x;\alpha)$  and $\iota\colon R\hookrightarrow E$ be the
natural embedding of $R$ in $E$. Consider the field of skew Laurent
series $F((x;\alpha))$. Then

\begin{enumerate}[\rm(i)]
\item For each $n\geq 0$,
$E_{\iota}(n)\subseteq F_\varepsilon(n)((x;\alpha))$.

\item Let $f\in F$. If $\h_\varepsilon(f)=n$, then
$\h_\iota(f)=n$.
\item $\h_{\iota}(R)\geq \h_{\varepsilon}(S)$.
\end{enumerate}
\end{prop}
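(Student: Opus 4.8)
The plan is to prove Proposition~\ref{prop:heightskewpolynomial2} by reducing it to the already-established Proposition~\ref{prop:heightskewpolynomial}, exploiting the fact that the Laurent case is the special case $\delta=0$ with $\alpha$ an automorphism. The key structural observation is that when $\alpha$ is an automorphism and $\delta=0$, the ring $R=S[x,x^{-1};\alpha]$ and the field $E=F(x;\alpha)$ sit inside the Laurent series field $F((x;\alpha))$ rather than the differential series field $F((y;\alpha,\delta))$, and the series variable is $x$ itself rather than $y=x^{-1}$. So the first thing I would check is that the abstract inclusion machinery from Lemma~\ref{lem:automorphism} applies: since $\alpha(S)=S$, part (ii) of that lemma gives that $\alpha$ restricts to an automorphism of each $F_\varepsilon(n)$, which in particular means the hypothesis \eqref{eq:inversionheightskewpolynomial} of Proposition~\ref{prop:heightskewpolynomial} is automatically satisfied (as the remark immediately following that proposition already points out). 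This is what lets me borrow its conclusions.

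For part (i), I would argue by induction on $n$ exactly as in the proof of Proposition~\ref{prop:heightskewpolynomial}, but with the simplifications afforded by $\delta=0$. The base case is clean: $E_\iota(0)=S[x,x^{-1};\alpha]$, and every element $\sum_{i=-m}^{n} a_i x^i$ is visibly a finite Laurent polynomial, hence lies in $S((x;\alpha))=\mathcal{L}_0$ where I set $\mathcal{L}_n=F_\varepsilon(n)((x;\alpha))$. For the inductive step, given $f\in E_\iota(n)$ expressed in $\mathcal{L}_n$ as $f=\sum_{i\geq l} a_i x^i$ with $a_i\in F_\varepsilon(n)$ and leading nonzero coefficient $a_l$, I factor out $a_l x^l$ to write $f=a_l x^l\bigl(1-\sum_{m\geq 1} c_m x^m\bigr)$ and invert the parenthesized unit by a geometric series, so that $f^{-1}=\bigl(\sum_{s\geq 0}(\sum_{m\geq 1}c_m x^m)^s\bigr) x^{-l}a_l^{-1}$. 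The point is that each coefficient of this series lands in $F_\varepsilon(n+1)$ by Lemma~\ref{lem:automorphism}(i), so $f^{-1}\in\mathcal{L}_{n+1}$; since the generators of $E_\iota(n+1)$ are such inverses together with elements of $E_\iota(n)\subseteq\mathcal{L}_n\subseteq\mathcal{L}_{n+1}$, we conclude $E_\iota(n+1)\subseteq\mathcal{L}_{n+1}$.

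For part (ii) I would mirror the contradiction argument of Proposition~\ref{prop:heightskewpolynomial}(ii). Assuming $S$ is not a field (the field case being trivial) and $f\in F_\varepsilon(n+1)\setminus F_\varepsilon(n)$, I suppose for contradiction that $f\in\mathcal{L}_n$, so $f=\sum_{i\geq l} a_i x^i$ with all $a_i\in F_\varepsilon(n)$. Comparing this with the fact that $f$ as a series in $F((x;\alpha))$ is simply $f=f\cdot x^0$—i.e.\ the coefficient of $x^0$ is $f$ itself and all other coefficients vanish—forces $a_0=f\in F_\varepsilon(n)$, contradicting $f\notin F_\varepsilon(n)$. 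Combined with $f\in E_\iota(n+1)\subseteq\mathcal{L}_{n+1}$ from part (i), this pins $\h_\iota(f)$ to exactly $n+1$ matching $\h_\varepsilon(f)$; and part (iii) is then immediate from (ii) by taking the supremum over $f\in F$. The main thing to be careful about—and the only place the argument genuinely differs from the differential case—is the bookkeeping of the series variable: here $x$ plays the role of the uniformizing series variable directly, so an element of $F$ embeds as a constant term and the ``degree shift'' by $\alpha^r$ that appeared in the differential proof does not intervene, which is why the contradiction in part (ii) is cleaner. I expect no serious obstacle beyond verifying that the geometric-series inversion converges coefficientwise in $\mathcal{L}_{n+1}$, which follows formally as in the previous proposition since only finitely many powers of $\sum c_m x^m$ contribute to each coefficient of $x^s$.
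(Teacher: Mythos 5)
Your proposal is correct and follows essentially the same route as the paper, which at this point simply says to set $\mathcal{L}_n=F_\varepsilon(n)((x;\alpha))$ and proceed as in the proof of Proposition~\ref{prop:heightskewpolynomial}; you have merely written out the details of that adaptation (the geometric-series inversion for (i), and the cleaner constant-term comparison for (ii) since no $\alpha^r$-shift occurs). No gaps.
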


\begin{proof}
Consider $\mathcal{L}_n=F_\varepsilon(n)((x;\alpha))$ as a subring
of $F((x;\alpha))$. Then proceed as in the proof of
Proposition~\ref{prop:heightskewpolynomial}
\end{proof}

\section{Two solutions} \label{sec:elementary}

We shall use the following notation. Let $A$ be an $n\times n$
matrix with entries over a ring. Let $i,j,p,q\in\{1,\dotsc,n\}$. By
$A^{ij}$  we denote the matrix obtained from $A$ by deleting the
$i$-th row and the $j$-th column. By
$r_p^j$\label{notation:rowwithoutentry} we mean the row vector
obtained from the $p$-th row of $A$ deleting the $j$-th entry. And
by $s_q^i$\label{notation:columnwithoutentry} we denote the column
vector obtained from the $q$-th column of $A$ by deleting the $i$-th
entry.

Let $k$ be a commutative field and $X$ a set. Let $A=(x_{ij})$ be an
$n\times n$ matrix with entries over the free $k$-algebra
$\freealgebra kX$. We say that $A$ is a \emph{generic matrix} (over
$\freealgebra kX$) if the $x_{ij}$'s are distinct variables in $X$.
If $\iota\colon \freealgebra kX\hookrightarrow E$ is the universal
field of fractions of $\freealgebra kX$, then such
a generic matrix is invertible over $E$. Moreover the $(j,i)$-th
entry of $A^{-1}\in M_n(E)$  is the inverse of
$$|A|_{ij}=x_{ij}-r_i^j(A^{ij})^{-1}s_j^i.$$  The element $|A|_{ij}$
is known as the $(i,j)$-th \emph{quasideterminant} of $A$
\cite{Gelfandretakhwilson}.

\begin{theo}\label{theo:reutenauerinversionheight}
\emph{(C.~Reutenauer \cite[Theorem~2.1]{Reutenauerinversionheight})}
Let $k$ be a commutative field and let $X$ be a finite set of
cardinality at least $n^2$, where $1\leq n<\infty$. Let $\iota\colon
\freealgebra kX\hookrightarrow E$ be the embedding of the free
algebra $\freealgebra kX$ in its universal field of fractions $E$.
Let $A$ be an $n\times n$ generic matrix. If $f$ is an  entry of
$A^{-1}\in M_n(E)$, then $\h_\iota(f)=n$.\qed
 \end{theo}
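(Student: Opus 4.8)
The plan is to prove the two inequalities $\h_\iota(f)\le n$ and $\h_\iota(f)\ge n$ separately, each by induction on $n$, with the quasideterminant expansion serving as the recursion that links the $n\times n$ case to the $(n-1)\times(n-1)$ one. Since every entry of $A^{-1}$ equals $|A|_{ij}^{-1}$ for suitable indices, it is enough to treat one quasideterminant inverse, and I will use $|A|_{ij}=x_{ij}-r_i^j(A^{ij})^{-1}s_j^i$ throughout. The key structural observation is that $A^{ij}$ is again a generic matrix: deleting one row and one column leaves pairwise distinct variables, and the hypothesis $|X|\ge n^2$ ensures that enough variables are available to start and maintain the induction (at stage $n-1$ only $(n-1)^2$ are needed).

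For the upper bound, the base case $n=1$ is clear: $A=(x_{11})$ and the unique entry of $A^{-1}$ is $x_{11}^{-1}$, which lies in $E_\iota(1)$ but not in $E_\iota(0)=\freealgebra{k}{X}$, since $x_{11}$ is not invertible in the free algebra. For the inductive step, the entries of $(A^{ij})^{-1}$ are inverses of quasideterminants of the $(n-1)\times(n-1)$ generic matrix $A^{ij}$, hence lie in $E_\iota(n-1)$ by the inductive hypothesis; as $r_i^j$ and $s_j^i$ have entries in $X\subseteq E_\iota(0)$, the element $r_i^j(A^{ij})^{-1}s_j^i$ is built from members of $E_\iota(n-1)$ by ring operations, so $|A|_{ij}\in E_\iota(n-1)$ and $f=|A|_{ij}^{-1}\in E_\iota(n)$. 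This yields $\h_\iota(f)\le n$.

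The lower bound $\h_\iota(f)\ge n$ is the heart of the matter and the step I expect to be the main obstacle, because an element of the free field has a great many rational representations and its height cannot be read off from any single one. What is needed is a \emph{representation-independent} quantity $\mu\colon E_\iota\to\naturals$ satisfying $\mu(a\pm b)\le\max\{\mu(a),\mu(b)\}$, $\mu(ab)\le\max\{\mu(a),\mu(b)\}$, $\mu(s^{-1})\le\mu(s)+1$, and $\mu\equiv 0$ on $\freealgebra{k}{X}$. An easy induction then gives $E_\iota(m)\subseteq\{g\in E_\iota\mid\mu(g)\le m\}$, so it would suffice to prove the \emph{separate} lower estimate $\mu\big(|A|_{ij}^{-1}\big)\ge n$ from the explicit definition of $\mu$. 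The natural candidates I would try are the $\mathfrak m$-adic (augmentation) valuation together with the inversion behaviour of the resulting leading forms in the associated graded field, and the structure of the support of $f$ viewed as a Malcev--Neumann series in $k((H,<))$, where nested inversions produce supports of strictly increasing combinatorial complexity.

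Granting such a $\mu$, the remaining task is again an induction on $n$ showing that genericity forbids any collapse. By the inductive hypothesis the entries of $(A^{ij})^{-1}$ satisfy $\mu=n-1$; since the $n^2$ entries of $A$ are distinct variables and hence satisfy no algebraic relation, the leading contribution of $r_i^j(A^{ij})^{-1}s_j^i$ cannot cancel against $x_{ij}$, whence $\mu\big(|A|_{ij}\big)=n-1$ and the final inversion genuinely raises $\mu$ to $n$. The genuinely hard part, which cannot be bypassed, is precisely the construction of $\mu$: proving that it descends from expressions to a well-defined function on $E$, that it obeys the displayed ultrametric and inversion inequalities, and that it admits an explicit lower estimate on quasideterminant inverses. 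This is what makes the lower bound substantially deeper than the upper one, and it is where the essential technical work of the theorem lies.
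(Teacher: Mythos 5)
Your upper bound $\h_\iota(f)\le n$ is correct and is the standard quasideterminant recursion: the entries of $(A^{ij})^{-1}$ lie in $E_\iota(n-1)$ by induction, $E_\iota(n-1)$ is a ring containing $X$, so $|A|_{ij}\in E_\iota(n-1)$ and $f=|A|_{ij}^{-1}\in E_\iota(n)$. Note, however, that the paper does not prove this statement at all: it is quoted verbatim from Reutenauer with a reference, precisely because the content of the theorem is the lower bound, and that is the part your proposal does not actually establish.

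The gap is concrete: everything after ``Granting such a $\mu$'' is conditional on an object you never construct. You correctly identify that a representation-independent invariant $\mu\colon E_\iota\to\naturals$ with the ultrametric and inversion inequalities would reduce the problem to the single estimate $\mu\big(|A|_{ij}^{-1}\big)\ge n$, but (a) neither candidate you name survives scrutiny as stated --- the augmentation-adic valuation of $(1+x)^{-1}=1-x+x^2-\dotsb$ is $0$ and its Malcev--Neumann support is a single geometric progression, so neither ``leading form'' nor ``support complexity'' obviously increases under inversion without a much more delicate definition --- and (b) even granting $\mu$, your closing step conflates two different directions: the axiom $\mu(s^{-1})\le\mu(s)+1$ only bounds $\mu(f)$ \emph{above} by $n$, whereas the lower bound requires showing that this particular inversion cannot fail to raise $\mu$, which is not a consequence of the listed axioms plus ``the variables satisfy no algebraic relation.'' Reutenauer's actual argument is of a different nature: he controls elements of inversion height at most $h$ by specializing the free variables to generic commutative matrices of size $N$ and comparing the growth in $N$ of the degrees of the resulting rational functions against the degree forced by inverting an $nN\times nN$ generic matrix; no invariant on $E_\iota$ of the kind you postulate is defined. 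As written, your proposal proves $\h_\iota(f)\le n$ and reduces $\h_\iota(f)\ge n$ to an unproven (and, in the forms suggested, false or at best unsubstantiated) lemma, so the theorem is not established.
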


To adapt this result to our purposes, we note the following
Corollary.

\begin{coro}\label{coro:matrixheight} Let $k$ be a commutative field,
let $Z$ be an infinite set and let $N$ be the free group on $Z$. Let
$\varepsilon'\colon  k[N]\hookrightarrow F$ be the universal field
of fractions of the group algebra $k[N]$, and
$\varepsilon=\varepsilon'_{\mid \freealgebra kZ}\colon \freealgebra
kZ\hookrightarrow F$. Then
$\h_{\varepsilon'}(k[N])=\h_\varepsilon(\freealgebra kZ)=\infty$.
Indeed, if $A_n$ is an $n\times n$ generic matrix and $f$ is an
entry of $A_n^{-1}\in M_n(F)$, then $\h_\varepsilon(f)=n$ and
$\h_{\varepsilon'}(f)=n-1$.
\end{coro}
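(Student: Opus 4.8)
The plan is to reduce the statement about the group algebra $k[N]$ to Reutenauer's Theorem~\ref{theo:reutenauerinversionheight}, which is stated for the free algebra $k\langle Z\rangle$, and then to bridge between $\varepsilon$ and $\varepsilon'$. First I would invoke the final remark of Section~\ref{localitzacio}: since $N$ is the free group on $Z$ and $F$ is the universal field of fractions of $k[N]$, it is simultaneously the universal field of fractions of $k\langle Z\rangle$, and the field of fractions of $k[N]$ (respectively of $k\langle Z\rangle$) inside $F$ coincides with $F$ itself. Because $Z$ is infinite it contains, for every $n$, a subset of cardinality at least $n^2$, so I may apply Theorem~\ref{theo:reutenauerinversionheight} to an $n\times n$ generic matrix $A_n$ whose entries are distinct variables from $Z$. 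That theorem gives directly $\h_\varepsilon(f)=n$ for any entry $f$ of $A_n^{-1}\in M_n(F)$; letting $n\to\infty$ yields $\h_\varepsilon(k\langle Z\rangle)=\infty$.

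Next I would compare the two heights. Since $k\langle Z\rangle$ is a subring of $k[N]$ and $\varepsilon=\varepsilon'_{\mid k\langle Z\rangle}$, Remarks~\ref{rems:usedalot}(b) gives $\h_{\varepsilon'}(f)\le\h_\varepsilon(f)$ for every $f\in F$. Thus from $\h_\varepsilon(k\langle Z\rangle)=\infty$ one already gets $\h_{\varepsilon'}(k[N])=\infty$: no finite $n$ can bound all the heights $\h_{\varepsilon'}(f)$, because otherwise the same bound would serve for the entries of $A_m^{-1}$ with $m$ large, contradicting that these entries, viewed through $\varepsilon$, have height $m$ while $\h_{\varepsilon'}$ cannot drop more than a fixed amount below $\h_\varepsilon$ — and more precisely I will pin the gap down exactly in the last sentence. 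So the two suprema are both infinite, establishing the first assertion $\h_{\varepsilon'}(k[N])=\h_\varepsilon(k\langle Z\rangle)=\infty$.

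The delicate part is the precise claim $\h_{\varepsilon'}(f)=n-1$ for an entry $f$ of $A_n^{-1}$, which asserts that passing from the free algebra to the group algebra saves exactly one nested inversion. The key observation is that the generators of $k[N]$ include the \emph{inverses} $\bar z^{-1}$ of the free group generators, whereas in $k\langle Z\rangle$ only the $z$ themselves are available. Concretely, in the formula $|A_n|_{ij}=x_{ij}-r_i^j(A_n^{ij})^{-1}s_j^i$ the innermost inversion in the free-algebra computation is the inversion of a single variable-type expression that, in $k[N]$, is already invertible at level zero since group generators are units. I would therefore argue that each level $F_{\varepsilon'}(m)$ contains $F_\varepsilon(m+1)$, by an induction showing that the one extra layer of inversions needed over $k\langle Z\rangle$ to produce the group-inverses $\bar z^{-1}$ is absorbed into the ground ring $k[N]=F_{\varepsilon'}(0)$. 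Combined with the reverse inequality $\h_{\varepsilon'}(f)\le\h_\varepsilon(f)=n$ and a matching lower bound, this forces $\h_{\varepsilon'}(f)=n-1$.

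I expect the main obstacle to be the lower bound $\h_{\varepsilon'}(f)\ge n-1$, that is, ruling out that the group-algebra structure lets one compute an entry of $A_n^{-1}$ with fewer than $n-1$ nested inversions. The inequality $\h_{\varepsilon'}(f)\le n-1$ should follow from the containment $F_\varepsilon(m+1)\subseteq F_{\varepsilon'}(m)$ sketched above together with $f\in F_\varepsilon(n)$, but the reverse direction requires a genuine obstruction argument. Here I would lean on the shift-by-one relationship between the filtrations $\{F_\varepsilon(m)\}$ and $\{F_{\varepsilon'}(m)\}$: establishing $F_{\varepsilon'}(m)\subseteq F_\varepsilon(m+1)$ as well would give the exact matching $F_{\varepsilon'}(m)=F_\varepsilon(m+1)$ for all $m\ge 0$, whence $\h_{\varepsilon'}(f)=\h_\varepsilon(f)-1=n-1$ transfers Reutenauer's exact height computation verbatim. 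The crux is thus to prove this precise one-step shift between the two filtrations, and this is where the argument must be carried out carefully rather than by the soft comparison of Remarks~\ref{rems:usedalot}(b) alone.
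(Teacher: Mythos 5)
Your overall architecture is close to the paper's, and the true half of your comparison, $F_{\varepsilon'}(m)\subseteq F_\varepsilon(m+1)$ (which holds because $k[N]\subseteq F_\varepsilon(1)$), is exactly what the paper uses to deduce $\h_{\varepsilon'}(k[N])=\infty$ and the lower bound $\h_{\varepsilon'}(f)\ge n-1$ from $\h_\varepsilon(f)=n$. But the other half of your proposed ``one-step shift'', namely $F_\varepsilon(m+1)\subseteq F_{\varepsilon'}(m)$, is false already at $m=0$: for $z\in Z$ the element $(1+z)^{-1}$ lies in $F_\varepsilon(1)$ but not in $F_{\varepsilon'}(0)=k[N]$, since the group algebra of a free group inverts only (scalar multiples of) group elements, not arbitrary nonzero polynomials. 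The heuristic you invoke --- that the extra layer of inversions is ``absorbed into the ground ring $k[N]$'' --- is valid only for inversions of single group generators, not for the inversion of a general nonzero element of $F_\varepsilon(m)$, so the induction you sketch collapses at its base case and the identity $F_{\varepsilon'}(m)=F_\varepsilon(m+1)$ cannot be salvaged. You also have the difficulty backwards: given $\h_\varepsilon(f)=n$, the inequality $\h_{\varepsilon'}(f)\ge n-1$ is the easy direction (it follows from the true containment alone), whereas $\h_{\varepsilon'}(f)\le n-1$ is the one needing a genuine argument.

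The correct argument for that upper bound is the quasideterminant induction you gesture at but do not carry out: prove $\h_{\varepsilon'}(f)\le n-1$ by induction on $n$ using $f=\bigl(x_{ij}-r_i^j(A_{n}^{ij})^{-1}s_j^i\bigr)^{-1}$; the base case $n=1$ is where the group structure enters ($f=x^{-1}\in N\subseteq k[N]$, so $\h_{\varepsilon'}(f)=0$), and each inductive step adds exactly one inversion. This keeps the saving of one inversion localized to the innermost $1\times 1$ block rather than asserting a (false) global shift of the two filtrations. A further, smaller gap: Theorem~\ref{theo:reutenauerinversionheight} is stated for a finite alphabet, so it gives $\h_\eta(f)=n$ only for the embedding $\eta$ of the free algebra on a finite subset $Y\subseteq Z$ containing the entries of $A_n$; Remarks~\ref{rems:usedalot}(b) then yields $\h_\varepsilon(f)\le n$, but the lower bound $\h_\varepsilon(f)\ge n$ over the infinite alphabet requires the reduction to finitely generated subalgebras provided by Lemma~\ref{lem:finiteset}, a step you omit when you say the theorem ``gives directly'' $\h_\varepsilon(f)=n$.
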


\begin{proof}
First of all notice that since $\freealgebra kZ\subseteq k[N]$,
\begin{equation}\label{eq:inversionheightgeneric}
E_{\varepsilon}(m)\subseteq E_{\varepsilon'}(m)\subseteq
E_\varepsilon(m+1)\subseteq E_{\varepsilon'}(m+1) \end{equation} for
each integer $m\geq 0$. Thus if $\h_{\varepsilon}(\freealgebra
kZ)=\infty$, then $\h_{\varepsilon'}(k[N])=\infty$.

Let $A_n$ be an $n\times n$ generic matrix.  Recall that if $Y$ is a
subset of $Z$ and $\eta=\varepsilon_{\mid \freealgebra
kY}\colon\freealgebra kY\hookrightarrow F$, then $F_\eta$ is the
universal field of fractions of $\freealgebra kY$. Thus if $Y$ is
any finite subset of $Z$ that contains the entries of $A_n$ and $f$
is an entry of $A_n^{-1}$, then $\h_\eta(f)=n$ by
Theorem~\ref{theo:reutenauerinversionheight}. Now
Lemma~\ref{lem:finiteset} implies that $\h_\varepsilon(f)=n$, and
\eqref{eq:inversionheightgeneric} that $\h_{\varepsilon'}(f)\geq
n-1$.

Since $Z$ is an infinite set, there exist $n\times n$ generic
matrices $A_n$ for each natural $n\geq 1$ and therefore
$\h_\iota(\freealgebra kZ)$ is not finite by the foregoing.

We prove that $\h_{\varepsilon'}(f)\leq n-1$ by induction on $n\geq
1$. If $n=1$, the result follows because $f\in Z$ and therefore
$f^{-1}\in N$. Suppose the claim holds for $n\geq 1$. Consider an
$(n+1)\times (n+1)$ generic matrix $A_{n+1}=(x_{ij})$. Then $f$ is
the $(j,i)$-th entry of $A_{n+1}^{-1}$. Thus
$f=\Big(x_{ij}-r_i^j(A_{n+1}^{ij})^{-1}s_j^i \Big)^{-1}$ for some
$i,j$. Since $A_{n+1}^{ij}$ is an $n\times n$ generic matrix, the
induction hypothesis implies that if $g$ is any entry of
$(A_{n+1}^{ij})^{-1}$ then $\h_{\varepsilon'}(g)\leq n-1$. Therefore
$\h_{\varepsilon'}(f)\leq n$.
\end{proof}

\subsection{First solution}

If $x,\ y$ are two elements of a ring, we denote by $[x,y]$ the
element $[x,y]=xy-yx$.

We are interested in extending derivations to certain localizations
of $R$. We recall the following easy and well known formula which
implies that such extensions, if they exist, are unique.

\begin{lem} \label{lem:uniqueextensionderivation}
Let $R$ be a ring, and let $\delta\colon R\to R$ be a derivation. If
$r\in R$ is invertible, then $\delta
(r^{-1})=-r^{-1}\delta(r)r^{-1}$. Hence, if $R\rightarrow D$ is a
ring extension such that $D$ is a field of fractions of $R$ and
$\delta$, $\delta '\in \mathrm{Der}\, (D)$ are such that $\delta
(r)=\delta '(r)$, for any $r\in R$, then $\delta =\delta '$.
\end{lem}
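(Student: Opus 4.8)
The plan is to prove the two assertions in turn, the first by a direct computation and the second by reducing to the first via an induction along the layers of the rational-generation filtration.

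For the formula, first observe that any derivation kills the identity: from $\delta(1)=\delta(1\cdot 1)=\delta(1)\cdot 1+1\cdot\delta(1)=2\,\delta(1)$ we get $\delta(1)=0$. Now apply the Leibniz rule to the relation $r\,r^{-1}=1$ to obtain
\begin{equation*}
0=\delta(1)=\delta(r\,r^{-1})=\delta(r)\,r^{-1}+r\,\delta(r^{-1}).
\end{equation*}
Left-multiplying by $r^{-1}$ and solving for $\delta(r^{-1})$ yields $\delta(r^{-1})=-r^{-1}\delta(r)\,r^{-1}$, which is the claimed identity.

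For the uniqueness statement, write $\iota\colon R\hookrightarrow D$ for the given embedding and recall from Section~3 that, since $D$ is a field of fractions of $R$, we have $D=D_\iota=\bigcup_{n\ge 0}D_\iota(n)$, where $D_\iota(0)=R$ and $D_\iota(n+1)$ is the subring generated by the elements of $D_\iota(n)$ together with the inverses of the nonzero elements of $D_\iota(n)$. I would set $\partial=\delta-\delta'$ and note that $\partial$ is again a derivation of $D$ (the difference of two derivations is a derivation) which, by hypothesis, vanishes on $D_\iota(0)=R$. I then prove by induction on $n$ that $\partial$ vanishes on $D_\iota(n)$: the base case is the hypothesis, and for the inductive step it suffices to check that $\partial$ vanishes on the generators of $D_\iota(n+1)$. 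These generators are the elements of $D_\iota(n)$, on which $\partial$ vanishes by the inductive hypothesis, and the inverses $s^{-1}$ of nonzero $s\in D_\iota(n)$, for which the first part of the lemma gives $\partial(s^{-1})=-s^{-1}\partial(s)\,s^{-1}=0$. Since $\partial$ is additive and satisfies the Leibniz rule, vanishing on a set of ring generators forces it to vanish on the whole subring $D_\iota(n+1)$. Taking the union over all $n$ gives $\partial=0$ on $D$, that is $\delta=\delta'$.

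There is no serious obstacle here: the first identity is a one-line manipulation, and the only point requiring care in the second part is the observation that a derivation is completely determined by its values on a set of ring generators, which is exactly what makes the induction along the filtration $D_\iota(n)$ go through. The essential input is precisely that $D$ is \emph{rationally} generated by $R$, so that every element of $D$ is reached after finitely many nested inversions starting from $R$.
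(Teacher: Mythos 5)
Your proof is correct. The paper actually states this lemma without proof, introducing it as an ``easy and well known formula,'' and your argument --- $\delta(1)=0$, the Leibniz rule applied to $r\,r^{-1}=1$ for the inversion formula, and then induction along the filtration $D_\iota(n)$ using that a derivation vanishing on ring generators vanishes on the generated subring --- is exactly the standard reasoning the authors are implicitly relying on.
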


In the next lemma, we show that derivations can be extended to
matrix localizations provided the set $\Phi$ we localize at is
\emph{upper multiplicative}, that is, $1\in \Phi$ and whenever
$A,B\in\Phi$, then $\left(\begin{smallmatrix} A & C \\ 0 & B
\end{smallmatrix}\right)\in \Phi$ for any matrix $C$ of
appropriate size. The result, at least for fields of fractions of
Sylvester domains, is well known and the proof for the general case
follows the same pattern. However we include the proof for
completeness' sake.

Recall that if $R$ is a ring, $\delta \colon R\to R$ is a derivation if and only if the map $R\to M_2(R)$ given by $r\mapsto \begin{pmatrix}r&\delta (r)\\ 0&r \end{pmatrix}$, for any $r\in R$, is a ring homomorphism.

For the proof of the next result it is useful to keep in mind the
following explicit description of an isomorphism between $M_{2n}(S)$
and $M_n(M_2(S))$ for any natural number $n$ and any given ring $S$.
The elements of $M_n(M_2(S))$ are matrices of the form
$$A=\left(\begin{array}{ccc} A_{11} &
\cdots & A_{1n} \\ \vdots & \ddots & \vdots \\
A_{n1} & \cdots & A_{nn}
\end{array}\right)$$
where $A_{ij}=\left(\begin{smallmatrix} a_{ij} & b_{ij} \\ c_{ij} &
d_{ij}
\end{smallmatrix}\right)\in M_2(S)$ for each $i,j\in\{1,\dotsc,n\}$.
The map $\rho_n\colon M_n(M_2(S))\rightarrow M_{2n}(S)$ defined by
$$\left(\begin{array}{ccc} A_{11} & \cdots & A_{1n} \\
\vdots & \ddots & \vdots \\
A_{n1} & \cdots & A_{nn} \end{array}\right)\mapsto
\left(\begin{array}{cccccc}
a_{11} & \cdots & a_{1n} & b_{11} &
\dotsc & b_{1n} \\
\vdots & \ddots & \vdots & \vdots & \ddots & \vdots \\
a_{n1} & \cdots & a_{nn} & b_{n1} &
\dotsc & b_{nn} \\
c_{11} & \cdots & c_{1n} & d_{11} &
\dotsc & d_{1n} \\
\vdots & \ddots & \vdots & \vdots & \ddots & \vdots \\
c_{n1} & \cdots & c_{nn} & d_{n1} & \dotsc & d_{nn}
\end{array}\right)$$
is an isomorphism of rings.

\begin{lem}\label{lem:extensionofderivations}
Let $R$ be a ring, $\Phi$ an upper multiplicative set of square
matrices over $R$, and let $R\rightarrow R_\Phi$, $a\mapsto
\hat{a}$, be the matrix localization of $R$ at $\Phi$. Then any
derivation $\delta\colon R\rightarrow R$, $a\mapsto a^\delta$,
extends to a unique derivation of $R_\Phi$.

In particular, if $R\hookrightarrow D$ is the universal field of
fractions of a Sylvester domain $R$, then any derivation in $R$ can
be uniquely extended to $D$.
\end{lem}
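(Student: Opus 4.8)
The plan is to package the derivation as a ring homomorphism and then invoke the universal property of the matrix localization. By the correspondence recalled just before the statement, giving a derivation $\delta$ on $R$ is the same as giving the ring homomorphism
\[ \theta\colon R\longrightarrow M_2(R),\qquad \theta(a)=\begin{pmatrix} a & a^\delta \\ 0 & a\end{pmatrix}. \]
Composing with $M_2$ applied to the localization map $\lambda\colon R\to R_\Phi$, $a\mapsto \hat a$, produces a ring homomorphism $\psi=M_2(\lambda)\circ\theta\colon R\to M_2(R_\Phi)$ with $\psi(a)=\left(\begin{smallmatrix}\hat a & \widehat{a^\delta}\\ 0 & \hat a\end{smallmatrix}\right)$. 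First I would check that $\psi$ inverts every matrix of $\Phi$, so that it factors through $R_\Phi$.

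Fix $A=(a_{ij})\in\Phi$, an $n\times n$ matrix, and apply $\psi$ entrywise to obtain a matrix in $M_n(M_2(R_\Phi))$ whose $(i,j)$ block is $\psi(a_{ij})$. Under the isomorphism $\rho_n\colon M_n(M_2(R_\Phi))\to M_{2n}(R_\Phi)$ described above, this matrix is carried to $\left(\begin{smallmatrix}\hat A & \widehat{A^\delta}\\ 0 & \hat A\end{smallmatrix}\right)$, where $A^\delta$ is obtained by applying $\delta$ to each entry and $\hat A$, $\widehat{A^\delta}$ denote the images in $M_n(R_\Phi)$. Since $\Phi$ is upper multiplicative and $A\in\Phi$, the $2n\times 2n$ matrix $\left(\begin{smallmatrix}A & A^\delta\\ 0 & A\end{smallmatrix}\right)$ lies in $\Phi$; hence its image is invertible in $M_{2n}(R_\Phi)$, and therefore $\psi(A)$ is invertible in $M_n(M_2(R_\Phi))$. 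By the universal property of the matrix localization, $\psi$ factors uniquely as $\psi=\bar\psi\circ\lambda$ for a ring homomorphism $\bar\psi\colon R_\Phi\to M_2(R_\Phi)$.

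The main point, and the step I expect to be the most delicate, is to show that $\bar\psi$ again has the shape prescribed by the derivation-homomorphism dictionary, i.e. that it lands in the subring $T=\left\{\left(\begin{smallmatrix}u & v\\ 0 & u\end{smallmatrix}\right): u,v\in R_\Phi\right\}$ of dual numbers over $R_\Phi$ and has identity diagonal. The inclusion $\psi(R)\subseteq T$ is clear; to propagate it to all of $R_\Phi$ one uses that $R_\Phi$ is generated by $\lambda(R)$ together with the entries of $\hat A^{-1}$ for $A\in\Phi$. The key observation is that an element $B_0+B_1\varepsilon$ of $M_n(T)$, with $\varepsilon=\left(\begin{smallmatrix}0&1\\0&0\end{smallmatrix}\right)$ central and $\varepsilon^2=0$, is invertible as soon as $B_0\in M_n(R_\Phi)$ is, its inverse $(I-B_0^{-1}B_1\varepsilon)B_0^{-1}$ again lying in $M_n(T)$; applying this to $\bar\psi(\hat A)=\psi(A)\in M_n(T)$, whose diagonal part is the invertible $\hat A$, shows that $\bar\psi$ sends the entries of $\hat A^{-1}$ into $T$. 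Thus $\bar\psi(R_\Phi)\subseteq T$, so $\bar\psi(u)=\left(\begin{smallmatrix}p(u) & u^D\\ 0 & p(u)\end{smallmatrix}\right)$ for a ring endomorphism $p$ of $R_\Phi$ and an additive map $D\colon R_\Phi\to R_\Phi$. Since $p\circ\lambda=\lambda$, the uniqueness clause of the universal property forces $p=\mathrm{id}$; then reading off the homomorphism property of $\bar\psi$ inside $T$ yields exactly $(uv)^D=u\,v^D+u^D\,v$, so $D$ is a derivation, and $\bar\psi\circ\lambda=\psi$ gives $\hat a^{\,D}=\widehat{a^\delta}$, i.e. $D$ extends $\delta$. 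Uniqueness of $D$ follows because any derivation $D'$ extending $\delta$ produces, via the dictionary, a homomorphism $R_\Phi\to M_2(R_\Phi)$ with the same restriction $\psi$ to $\lambda(R)$, hence equal to $\bar\psi$ by the universal property.

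For the final assertion, recall that the universal field of fractions $D$ of a Sylvester domain $R$ is the localization $R_\Phi$ at the set $\Phi$ of all full matrices. It remains only to note that this $\Phi$ is upper multiplicative: the $1\times 1$ identity is full, and by the law of nullity for Sylvester domains the diagonal sum $\left(\begin{smallmatrix}A & C\\ 0 & B\end{smallmatrix}\right)$ of two full matrices $A,B$ is again full, cf. \cite{Cohnskew}. The first part then applies verbatim and extends any derivation of $R$ uniquely to $D$.
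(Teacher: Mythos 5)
Your proof is correct and follows essentially the same route as the paper: encode the derivation as the ring homomorphism $a\mapsto\left(\begin{smallmatrix}\hat a&\widehat{a^\delta}\\0&\hat a\end{smallmatrix}\right)$, check that it is $\Phi$-inverting, factor it through $R_\Phi$ by the universal property, and use the rational-closure description of $R_\Phi$ to see that the induced map retains the dual-number shape. The only cosmetic differences are that the paper verifies $\Phi$-invertingness by exhibiting the explicit inverse $\left(\begin{smallmatrix}\hat A^{-1}&-\hat A^{-1}\widehat{A^\delta}\hat A^{-1}\\0&\hat A^{-1}\end{smallmatrix}\right)$ rather than invoking upper multiplicativity of $\Phi$, and that you spell out the uniqueness argument, which the paper leaves implicit.
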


\begin{proof} We suppose that $R\rightarrow R_\Phi$ is given by $a\mapsto
\hat{a}$. For each matrix $A=(a_{ij})\in M_n(R)$, denote by
$\hat{A}=(\hat{a}_{ij})\in M_n(R_\Phi)$ and by $A^\delta$ the matrix
$(a_{ij}^\delta)\in M_n(R)$.

For each natural $n$, consider the map $\psi_n\colon
M_n(R)\rightarrow M_{2n}(R_\Phi)$ given by $A\mapsto
\left(\begin{smallmatrix} \hat{A} & \widehat{A^\delta} \\
0 & \hat{A}
\end{smallmatrix}\right)$. Since $\delta$ is a derivation, $\psi_n$
is a morphism of rings. For each $n\times n$ matrix $A\in \Phi$, the
matrix $\psi_n A$ is invertible in $M_{2n}(R_\Phi)$. Indeed, since
$\hat{A}$ is invertible in $R_\Phi$ by definition, the matrix
\begin{equation}\label{eq:inversematrix}
\left(\begin{array}{cc} \hat{A}^{-1} & -\hat{A}^{-1}
\widehat{A^\delta}\hat{A}^{-1} \\ 0 & \hat{A}^{-1}\end{array}\right)
\end{equation}
is the inverse of $\psi_nA$. Thus the image of any $n\times n$ matrix
in $\Phi$ by the morphism  $\rho_n^{-1}\psi_n\colon
M_n(R)\rightarrow M_n(M_2(R_\Phi))$ is invertible. Note that if
$A=(a_{ij})$, then
\begin{equation}\label{eq:image2nmatrices}
\rho_n^{-1}\psi_n A=\left(\begin{array}{ccc}A_{11} & \cdots & A_{1n} \\
\vdots & \ddots & \vdots \\
A_{n1} & \cdots & A_{nn}
\end{array}\right)
\end{equation}
where $A_{ij}=\left(\begin{smallmatrix} \hat{a}_{ij} &
\widehat{a_{ij}^\delta} \\ 0 & \hat{a}_{ij}
\end{smallmatrix}\right)$. Hence, the morphism $R\rightarrow
M_2(R_\Phi)$, $a\mapsto \left(\begin{smallmatrix} \hat{a} &
\widehat{a^\delta} \\ 0 & \hat{a}
\end{smallmatrix}\right)$ is $\Phi$-inverting, and there exists a
unique morphism $R_\Phi\rightarrow M_2(R_\Phi)$ making the diagram
$$\xymatrix{ R\ar[rr]\ar[dr]  & & M_2(R_{\Phi}) \\
 & R_\Phi\ar[ur]\\
}$$ commutative. Note that $R_\Phi$ is the $\Phi$-rational closure
of $R$ in $R_\Phi$
\cite[Theorem~7.12]{Cohnfreeeidealringslocalization}. Thus, for any
element $x\in R_\Phi$, there is some $A\in\Phi$ such that $x$ is an
entry of the inverse matrix of $\hat{A}$. Looking at
\eqref{eq:inversematrix} and \eqref{eq:image2nmatrices}, we see that
the image of $x\in M_2(R_\Phi)$ is of the form
$\left(\begin{smallmatrix} x & x^\Delta \\ 0 & x
\end{smallmatrix}\right)$ for some $x^\Delta\in R_\Phi$. Hence
$\Delta\colon R_\Phi\rightarrow R_\Phi$, $x\mapsto x^\Delta$, is a
derivation extending $\delta$, as desired.

For the last part, it is known that if $R$ is Sylvester domain, then
its universal field of fractions is of the form $R_\Phi$ where
$\Phi$ is the set of all full matrices over $R$. Note that $\Phi$ is
upper multiplicative because it is the set of matrices over $R$ that
become invertible in its universal field of fractions.
\end{proof}

Next result is based on the ideas of  \cite{Cohnembeddingtheorem},
where a particular kind of embedding of a free algebra of infinite
countable rank into free algebra of rank two is given.

\begin{theo} \label{theo:firstsolution}
Let $k$ be a commutative field and $\freealgebra
k{x,y_1,\dotsc,y_n}$ be the free algebra with $n\geq 1$. Let
$\iota\colon \freealgebra k{x,y_1,\dotsc,y_n}\hookrightarrow E$ be
the universal field of fractions of $\freealgebra
k{x,y_1,\dotsc,y_n}$. Then $\h_\iota (\freealgebra
k{x,y_1,\dotsc,y_n})=\infty$. Moreover, if
$$A_m=\left(\begin{array}{cccc}w_0 & w_m & \cdots
& w_{m^2-m} \\ w_1 & w_{m+1} & \cdots & w_{m^2-m+1}\\
\vdots & \vdots & \ddots &\vdots \\
w_{m-1} & w_{2m-1} & \cdots & w_{m^2-1}
\end{array}\right),$$ where
$$w_0=y_1, \quad w_i=[x,\dotsc[x,[x,y_1]]\dotsb] \ \textrm{ with $i$ factors }
x,$$ and $f$ is an entry of $A_m^{-1}\in M_m(E)$, then $\h_\iota
(f)=m$.
\end{theo}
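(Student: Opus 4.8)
The plan is to realize $R=\freealgebra k{x,y_1,\dots,y_n}$ as an Ore differential extension of a free algebra on countably many variables, and then combine Proposition~\ref{prop:heightskewpolynomial} with Reutenauer's computation in Corollary~\ref{coro:matrixheight}. Write $\delta=[x,-]\colon R\to R$ for the inner derivation $\delta(a)=xa-ax$, and set $w_{i,j}=\delta^{i}(y_j)$ for $i\ge 0$ and $1\le j\le n$; the elements $w_i$ of the statement are exactly $w_i=\delta^{i}(y_1)=w_{i,1}$. Let $S$ be the $k$-subalgebra of $R$ generated by all the $w_{i,j}$. Since $\delta(w_{i,j})=w_{i+1,j}$ and $\delta$ is a derivation, $\delta(S)\subseteq S$; moreover, rewriting any monomial by means of $xa=ax+\delta(a)$ pushes every occurrence of $x$ to the right and shows $R=\sum_{d\ge 0}Sx^{d}$. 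First I would invoke Cohn's embedding \cite{Cohnembeddingtheorem} to conclude that $S$ is a free algebra on the countably many distinct generators $w_{i,j}$ and that this sum is direct, so that $R=S[x;\delta]$ with $\alpha=\mathrm{id}$ and the commutation rule $xa=ax+\delta(a)$. A Hilbert series count, $H_R(t)=\tfrac{1}{1-(n+1)t}=H_S(t)\cdot\tfrac{1}{1-t}$, makes this identification plausible and can be used to check the freeness.

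Next I would pass to fields. Let $\varepsilon\colon S\hookrightarrow F$ be the universal field of fractions of the free algebra $S$, i.e. the free field on the $w_{i,j}$. By Lemma~\ref{lem:extensionofderivations} the derivation $\delta$ of the Sylvester domain $S$ extends uniquely to a derivation of $F$, which I again denote $\delta$. I then want to identify the universal field of fractions $E$ of $R$ with the Ore field $F(x;\delta)$: since $S$ is a fir and $\delta$ extends to $F=U(S)$, the extension $S[x;\delta]$ is again a fir whose universal field of fractions is $F(x;\delta)$. With this identification the embedding $\iota\colon R\hookrightarrow E$ of the statement is precisely the embedding $R\hookrightarrow F(x;\delta)$ appearing in Proposition~\ref{prop:heightskewpolynomial}.

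Granting this, the remaining steps are mechanical. Because $\alpha=\mathrm{id}$ is an automorphism, the hypothesis \eqref{eq:inversionheightskewpolynomial} of Proposition~\ref{prop:heightskewpolynomial} holds automatically, so part~(ii) of that proposition gives $\h_\iota(f)=\h_\varepsilon(f)$ for every $f\in F$. Now the entries $w_0,\dots,w_{m^2-1}$ of $A_m$ are $m^2$ distinct free generators of $S$, so $A_m$ is an $m\times m$ generic matrix over the free algebra $S=\freealgebra k{Z}$ on the countably infinite set $Z=\{w_{i,j}\}$; every such $A_m$ is invertible over $F$, and by Corollary~\ref{coro:matrixheight} each entry $f$ of $A_m^{-1}\in M_m(F)$ satisfies $\h_\varepsilon(f)=m$. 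Combining these equalities yields $\h_\iota(f)=m$. Since generic matrices $A_m$ exist for every $m\ge 1$, this exhibits elements of $E$ of arbitrarily large inversion height, and hence $\h_\iota(R)=\infty$.

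The hard part will be the structural input of the second paragraph (and its companion in the first): that the iterated commutators $w_{i,j}=\delta^{i}(y_j)$ generate a \emph{free} subalgebra $S$ with $R=S[x;\delta]$, and that $F(x;\delta)$ is the \emph{universal} field of fractions of $R$, not merely one of its many fields of fractions. The latter is the delicate point, since $S$ is free of rank at least two and therefore not an Ore domain, so one cannot clear denominators naively; the correct argument runs through Cohn's theory of firs and the universal localization description of $E=U(R)$, checking that every full matrix over $S[x;\delta]$ becomes invertible over $F(x;\delta)$. Once these identifications are secured, Proposition~\ref{prop:heightskewpolynomial} and Corollary~\ref{coro:matrixheight} assemble the conclusion directly.
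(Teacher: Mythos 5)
Your proposal is correct and follows essentially the same route as the paper's proof: both realize $\freealgebra k{x,y_1,\dotsc,y_n}$ as $S[x;\delta]$ over a free subalgebra $S$ on the countably many iterated commutators (citing Cohn's embedding theorem for the freeness of $S$, the identification $R=S[x;\delta]$, and $E=F(x;\delta)$), and then combine Proposition~\ref{prop:heightskewpolynomial} with Corollary~\ref{coro:matrixheight} applied to the generic matrices $A_m$. The ``hard part'' you flag is exactly the part the paper also delegates to \cite{Cohnembeddingtheorem} (via the honesty of the embedding $S\hookrightarrow R$), so there is no gap relative to the published argument.
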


\begin{proof}
Set $Z=\{z_0,z_1,\dotsc,z_m,\dotsc\}$, $S=\freealgebra kZ$,
$R=\freealgebra k{x,y_1,\dotsc,y_n}$ and $\varepsilon \colon
S\hookrightarrow F$ the universal field of fractions of $S$.

Proceeding as in \cite[Lemma~2.1]{Cohnembeddingtheorem} or  using
Lemma \ref{lem:extensionofderivations}, it can be seen that there
exists a derivation $\delta\colon S\rightarrow S$ such that $\delta
(z_i)=z_{i+n}$, for each $i\in\naturals$, and that it can be
extended to a unique derivation of $F$.

Express each integer $i\geq0$ (uniquely) as $i=rn+j$ with $0\leq
j\leq n-1$. As in \cite[Theorem~2.2]{Cohnembeddingtheorem}, one can
prove that there is an embedding $\beta_0\colon S\rightarrow R$
defined by
$$\beta_0 (z_i)=\left\{\begin{array}{ll}  y_{i+1} & \textrm{ for }
0\leq i\leq n-1\\ {}  [x,\dotsc[x,[x,y_{j+1}]]\dotsb] \textrm{ with
$r$ factors }x & \textrm{ for } n-1< i.
\end{array}\right.$$
which is honest (and 1-inert). Thus $\beta_0$ can be extended to a
morphism of rings $\beta_0\colon F\hookrightarrow E$. Again as in
\cite{Cohnembeddingtheorem}, identifying $S$ and $F$ with their
images via $\beta_0$, we get that $R=S[x;\delta]$ and that
$E=F(x;\delta)$. Since $\h_\varepsilon(S)=\infty$ by
Corollary~\ref{coro:matrixheight}, also $\h_\iota(R)=\infty$ by
Proposition~\ref{prop:heightskewpolynomial}(iii).

Now let  $f$ be an entry of the inverse of $A_m$. Note that the
matrix $A_m$ is (the image of) a generic matrix over $S$. Thus
Corollary~\ref{coro:matrixheight} says that $\h_\varepsilon(f)=m$.
Therefore $\h_\iota(f)=m$ by
Proposition~\ref{prop:heightskewpolynomial}(ii).
\end{proof}

\subsection{Second solution}

\begin{theo}\label{theo:solutiontoNeumann}
Let $k$ be a commutative field, $X=\{x,y_1,\dotsc,y_n\}$ be  a
finite set with $n\geq 1$, and $H$ be the free group on $X$. Let
$\iota'\colon k[H]\hookrightarrow E$ be the universal field of
fractions of $k[H]$, and $\iota=\iota'_{\mid \freealgebra
kX}\colon\freealgebra kX\hookrightarrow E$. Then
$\h_{\iota}(\freealgebra kX)=\h_{\iota'}(k[H])=\infty$. Moreover, if
$$A_m=\left(\begin{array}{cccc} z_0 & z_m &
\cdots & z_{m^2-m} \\ z_1 & z_{m+1} & \cdots &
z_{m^2-m+1}\\ \vdots & \vdots & \cdots & \vdots \\
z_{m-1} & z_{2m-1} & \cdots & z_{m^2-1}
\end{array}\right)\ \textrm{ where }   z_i=x^iy_1x^{-i},$$
and $f$ is an entry of $A_m^{-1}\in M_m(E)$, then $\h_{\iota}(f)=m$
and $\h_{\iota'}(f)=m-1$.
\end{theo}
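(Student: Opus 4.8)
The plan is to mirror the structure of the proof of Theorem~\ref{theo:firstsolution}, but working on the group side: I would realize $k[H]$ as a skew Laurent polynomial ring over the group algebra of a suitable free subgroup, and then invoke Proposition~\ref{prop:heightskewpolynomial2} together with Corollary~\ref{coro:matrixheight}. Concretely, let $N$ be the normal closure... no, rather let me take the onto homomorphism $\varphi\colon H\to C$ sending $x$ to a generator of the infinite cyclic group $C$ and each $y_i$ to $1$. Its kernel $N=\mathrm{Ker}\,\varphi$ is a free group, and by the remarks on crossed products in Subsection~\ref{subsec:crossedproducts} we get $k[H]=k[N]C\cong k[N][x,x^{-1};\alpha]$ where $\alpha$ is conjugation by $x$. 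The elements $z_i=x^iy_1x^{-i}$ all lie in $N$, and they are among the free generators of $N$ (by the Nielsen--Schreier / Reidemeister--Schreier description of the kernel); in particular the $z_i$ for $i\ge 0$ generate a free subgroup, so the matrix $A_m$ is (the image of) a generic matrix over a free group algebra on infinitely many generators.

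First I would set $S=k[N]$ with its universal field of fractions $\varepsilon'\colon k[N]\hookrightarrow F$, so that $\alpha$ extends to an automorphism of $F$ by Lemma~\ref{lem:automorphism}(ii). Since $\alpha$ is an automorphism, the hypothesis~\eqref{eq:inversionheightskewpolynomial} of Proposition~\ref{prop:heightskewpolynomial} is automatic (as noted after that proposition), so Proposition~\ref{prop:heightskewpolynomial2} applies to $R=S[x,x^{-1};\alpha]=k[H]$ with $E=F(x;\alpha)$ its universal field of fractions. Next, because the $z_i$ are free generators of a free subgroup of $N$, the matrix $A_m$ is a generic matrix over the corresponding free group algebra, and Corollary~\ref{coro:matrixheight} tells us that for any entry $f$ of $A_m^{-1}$ one has $\h_{\varepsilon'}(f)=m-1$ (the group-algebra height) and, viewing $A_m$ over the free \emph{algebra}, the associated height is $m$. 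Then Proposition~\ref{prop:heightskewpolynomial2}(ii) transports these heights unchanged from $F$ up to $E$, giving $\h_{\iota'}(f)=m-1$ and $\h_{\iota}(f)=m$; infinitude of $\h_{\iota'}(k[H])$ and $\h_{\iota}(\freealgebra kX)$ then follows from part~(iii) since $\h_{\varepsilon'}(k[N])=\infty$ by Corollary~\ref{coro:matrixheight}.

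The main obstacle I anticipate is verifying that the elements $z_i=x^iy_1x^{-i}$ really are part of a free basis of the kernel $N$, and more precisely identifying which free group (and hence which free subalgebra) the matrix $A_m$ lives over so that Corollary~\ref{coro:matrixheight} applies verbatim. Here I would use the Reidemeister--Schreier rewriting with the Schreier transversal $\{x^i\mid i\in\integers\}$: the resulting free generators of $N$ are exactly the elements $x^i y_j x^{-(i+\epsilon)}$ (suitably normalized), and among these the $z_i$ for $i\ge 0$ form an independent subset generating a free subgroup $N_0\le N$ of countably infinite rank. Thus $A_m$ is a generic matrix over $k[N_0]$, whose universal field of fractions sits inside $F$ by the closure property of universal fields of fractions of free-group subalgebras recorded at the end of Subsection~\ref{localitzacio}. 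A small bookkeeping point, paralleling the end of the proof of Corollary~\ref{coro:matrixheight}, is to confirm the two different height values ($m$ versus $m-1$) are correctly attributed to the free-algebra versus free-group-algebra embeddings; once $A_m$ is pinned down as generic over the right object, this is immediate from the Corollary, and the identity $\h_\iota=\h_\varepsilon$ provided by Proposition~\ref{prop:heightskewpolynomial2}(ii) finishes the argument.
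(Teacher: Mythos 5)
Your proposal follows essentially the same route as the paper: the same homomorphism $\varphi\colon H\to C$, the same free kernel $N$ with basis $Z=\{x^iy_jx^{-i}\}$, the identification $k[H]=k[N][x,x^{-1};\alpha]$, and the same combination of Corollary~\ref{coro:matrixheight} with Proposition~\ref{prop:heightskewpolynomial2}(ii). The claim $\h_{\iota'}(f)=m-1$ is obtained exactly as in the paper.

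There is, however, one step you gloss over that is not quite ``immediate from the Corollary.'' Proposition~\ref{prop:heightskewpolynomial2} applies to a ring of the form $S[x,x^{-1};\alpha]$, so with $S=\freealgebra kZ$ its conclusion is $\h_{\iota_Z}(f)=m$, where $\iota_Z$ is the embedding of $\freealgebra kZ[x,x^{-1};\alpha]$ into $E$ --- \emph{not} the embedding $\iota$ of $\freealgebra kX$, since $\freealgebra kX$ is not a skew Laurent extension of $\freealgebra kZ$. The paper bridges this with an explicit sandwich: because $\freealgebra kX\subseteq \freealgebra kZ[x,x^{-1};\alpha]\subseteq k[H]$ and each $z_i$ (and $x^{-1}$, $y_j^{-1}$) is obtained from $\freealgebra kX$ by a single inversion, one gets $E_\iota(m-1)\subseteq E_{\iota_Z}(m-1)$ and $E_{\iota'}(m-1)\subseteq E_\iota(m)$; the first inclusion together with $\h_{\iota_Z}(f)=m$ gives $\h_\iota(f)\geq m$, and the second together with $\h_{\iota'}(f)=m-1$ gives $\h_\iota(f)\leq m$. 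Without this two-sided comparison your argument only pins down $\h_{\iota_Z}(f)$, and Remarks~\ref{rems:usedalot}(b) by itself would only give the inequality $\h_\iota(f)\leq\h_{\iota_Z}(f)$ in the wrong direction for the lower bound. A second, smaller omission: to invoke Proposition~\ref{prop:heightskewpolynomial2} you must know that $E=F(x;\alpha)$, i.e.\ that the powers of $x$ are linearly independent over $F$; the paper verifies this by placing everything inside the Malcev--Neumann ring $k((H,<))$ and observing $F\subseteq k((N,<))$.
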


\begin{proof}
Fix an order on $H$ such that $(H,<)$ is an ordered group. We
identify $E$ with the field of fractions of $k[H]$ inside
$L=k((H,<))$.

Let $C=\langle c\rangle$ be the infinite cyclic group. Consider the
morphism of groups $\varphi\colon H\rightarrow C$ given by $x\mapsto
c$ and $y_j\mapsto 1$ for $1\leq j\leq n$. Let $N=\ker \varphi.$
Thus $H$ is the extension of $N$ by the infinite cyclic group
generated by $x$.  It is well known that $N$ is a free group with
basis the infinite set $Z=\{x^iy_jx^{-i}\mid 1\leq j\leq n,\
i\in\integers\}$, see for example \cite[Section~36]{Kurosh}.

 Let
$\varepsilon'=\iota'_{\mid k[N]}\colon  k[N]\hookrightarrow F$ and
$\varepsilon=\iota'_{\mid \freealgebra kZ}\colon \freealgebra
kZ\hookrightarrow F$ be the universal field of fractions of $k[N]$
and $\freealgebra kZ$ respectively, where we identify $F$ with
$E_\varepsilon=E_{\varepsilon'}$, the subfield rationally generated
by $k[N]$ inside $E$.

Let $\alpha\colon E\rightarrow E$ be the automorphism of $E$ given
by $f\mapsto xfx^{-1}$ for all $f\in E$. Notice that $\alpha$
restricts to an automorphism of $k[N]$ and also to an automorphism
of $\freealgebra kZ$. Then $\alpha$ can be extended to an
automorphism of $F$ by Lemma~\ref{lem:automorphism}(ii). Notice also
that $k[H]=k[N][x,x^{-1};\alpha]$, cf. \S \ref{sec:1.1}. Let
$\iota_Z=\iota'_{\mid \freealgebra kZ[x,x^{-1};\alpha]}\colon
\freealgebra kZ[x,x^{-1};\alpha]\rightarrow E$.

Observe that $F$ is contained in $k((N,<))\subseteq L$. Since
$n_1x^{r_1}=n_2x^{r_2}$ for $n_1,n_2\in N$ and $r_1,r_2\in\integers$
if and only if $n_1=n_2$ and $r_1=r_2$, the powers of $x$ are
$k((N,<))$-linearly independent. In particular the powers of $x$ are
$F$-linearly independent. Therefore $\Upsilon\colon
F[x,x^{-1};\alpha]\hookrightarrow E$ and, by the universal properties of the Ore localization, $E=F(x;\alpha)$.

Note that the entries of $A_m$ belong to $Z$. Let $f\in F$, be one of the entries of $A_m^{-1}$.
By Corollary~\ref{coro:matrixheight}, $\h_{\varepsilon'}(f)=m-1$.
Now, if we set $S=k[N]$ and $R=k[H]=k[N][x,x^{-1};\alpha]$,
Proposition~\ref{prop:heightskewpolynomial2}(ii) implies that
 $h_{\iota'}(f)=m-1$.

Similarly, by Corollary~\ref{coro:matrixheight},
$\h_{\varepsilon}(f)=m$. Now, if we set $S=\freealgebra kZ$ and
$R=\freealgebra kZ[x,x^{-1};\alpha]$,
Proposition~\ref{prop:heightskewpolynomial2}(ii) implies that
$h_{\iota_Z}(f)=m$.

Since $\freealgebra kX\subseteq\freealgebra
kZ[x,x^{-1};\alpha]\subseteq k[H]$ we obtain that $$E_\iota
(m-1)\subseteq E_{\iota_Z}(m-1)\subseteq E_\iota(m)\subseteq
E_{\iota_Z}(m),$$ $$E_\iota(m-1)\subseteq E_{\iota'}(m-1)\subseteq
E_\iota(m)\subseteq E_{\iota'}(m).$$ The first expression says that
$m\leq\h_{\iota}(f)$, and the second one $\h_{\iota}(f)\leq m$.
Therefore $\h_{\iota}(f)=m$.

Since $m$ is any natural number $\geq 1$, we obtain that there exist
elements $f\in E$ with any prescribed inversion height $m\geq 1$.
Therefore $\h_\iota(\freealgebra kX)=\h_{\iota'}(k[H])=\infty$.
\end{proof}


\section{Other embeddings of infinite inversion height} \label{sec:other}

Let $S$ be a ring, $G$ a group and $SG$ a crossed product
(determined by maps $\sigma$ and $\tau$ as in
section~\ref{subsec:crossedproducts}). Let $\varepsilon\colon
S\hookrightarrow F$ be an epimorphism of rings such that the
automorphism $\sigma(x)\in\Aut(R)$ can be extended to an
automorphism of $F$ for every $x\in G$. It is easy to prove, for
example as in \cite[Lemma~4]{SanchezfreegroupalgebraMNseries}, that
there exists a crossed product $FG$ with an embedding $\kappa\colon
SG\rightarrow FG$ with $\kappa_{\mid S}=\varepsilon$ and
$\kappa(\bar{x})=\bar{x}$.

If $\varepsilon\colon S\hookrightarrow F$ is a field of fractions,
then it is easy to prove that $\varepsilon$,  $\varepsilon_n\colon
S\hookrightarrow F_\varepsilon(n)$ and
$F_\varepsilon(n)\hookrightarrow F$ are  epimorphisms of rings for
each $n$. Suppose now that we are in the situation of the foregoing
paragraph. By Lemma~\ref{lem:automorphism}(ii), $\sigma(x)$ can be
extended to  $F_\varepsilon(n)$ for each $x\in G$ and $n\geq
0$. Thus we obtain the embeddings $$SG\hookrightarrow
F_\varepsilon(n)G\hookrightarrow FG$$ for each $n\geq 0$. If,
moreover, $(G,<)$ is an ordered group, we get the embeddings of
Malcev-Neumann series rings $$S((G,<))\hookrightarrow
F_\varepsilon(n)((G,<))\hookrightarrow F((G,<))$$ for each $n\geq
0$.

Next result is a general version for Malcev-Neumann series of Proposition \ref{prop:heightskewpolynomial}.

\begin{theo}\label{theo:inversionheightseries}
Let $S$ be a domain with a field of fractions $\varepsilon\colon
S\hookrightarrow F$. Let $(G,<)$ be an ordered group. Consider a
crossed product $SG$ such that it can be extended to  a crossed
product $FG$. Let $E=F((G,<))$ be the associated Malcev-Neumann
series ring and $\iota\colon SG\hookrightarrow E$ be the natural
embedding. Then
\begin{enumerate}[\rm(i)]
\item  $E_\iota(n)\subseteq \mathcal{L}_n=F_\varepsilon(n)((G,<))$
for each integer $n\geq 0$.
\item Let $f\in F$. If $\h_\varepsilon(f)=n$, then $\h_\iota(f)=n$.
\item $\h_\iota(SG)\geq \h_\varepsilon(S)$.
\end{enumerate}
\end{theo}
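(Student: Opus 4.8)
The plan is to mirror the structure of the proof of Proposition~\ref{prop:heightskewpolynomial}, replacing the skew Laurent series field $F((y;\alpha,\delta))$ with the Malcev-Neumann series ring $F((G,<))$, and replacing the single variable $x$ (whose powers form a well-ordered monoid) with the whole ordered group $G$. The three conclusions (i), (ii), (iii) have the same logical dependence as before: (i) is the core inductive statement, (ii) follows from (i) together with a lower bound argument, and (iii) is immediate from (ii).

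First I would set up the ambient filtration. By Lemma~\ref{lem:automorphism}(ii), for each $x\in G$ the automorphism $\sigma(x)$ restricts to an automorphism of each $F_\varepsilon(n)$, so the crossed product $F_\varepsilon(n)G$ and its Malcev-Neumann series ring $\mathcal{L}_n=F_\varepsilon(n)((G,<))$ are well defined and sit inside $E=F((G,<))$ as in the embeddings displayed just before the statement. The containments $\mathcal{L}_0\subseteq\mathcal{L}_1\subseteq\dotsb\subseteq E$ hold since $F_\varepsilon(0)\subseteq F_\varepsilon(1)\subseteq\dotsb$. For (i) I would induct on $n$. The base case $n=0$ is the observation that $SG\subseteq F_\varepsilon(0)((G,<))=\mathcal{L}_0$, which is clear since $S\subseteq F_\varepsilon(0)=F$ and the supports are finite, hence trivially well-ordered. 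For the inductive step, assuming $E_\iota(n)\subseteq\mathcal{L}_n$, it suffices to show the generators of $E_\iota(n+1)$ lie in $\mathcal{L}_{n+1}$; the only nontrivial generators are inverses $f^{-1}$ of nonzero $f\in E_\iota(n)\subseteq\mathcal{L}_n$.

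The heart of the argument is computing $f^{-1}$ inside $\mathcal{L}_{n+1}$ using the explicit Malcev-Neumann inversion formula recalled in Section~\ref{subsec:crossedproducts}. Writing $f=\sum_{x\in G}a_x\bar x$ with $a_x\in F_\varepsilon(n)$ and well-ordered support, I would set $x_0=\min(\supp f)$, put $g=a_{x_0}\bar{x}_0-f$, and use
$$f^{-1}=(a_{x_0}\bar{x}_0)^{-1}\sum_{m\geq 0}\big(g(a_{x_0}\bar{x}_0)^{-1}\big)^m.$$
The point to verify is that every coefficient of this series lies in $F_\varepsilon(n+1)$. The leading coefficient $a_{x_0}^{-1}$ is in $F_\varepsilon(n+1)$ by definition of the filtration. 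Each coefficient of $g(a_{x_0}\bar{x}_0)^{-1}$ is, by the crossed-product multiplication rule \eqref{eq:product}, built from the $a_x$, their images under the $\sigma(y)$, the twisting units $\tau(y,z)$, and $a_{x_0}^{-1}$. Here is where I would invoke Lemma~\ref{lem:automorphism}(i): since $\sigma(y)$ preserves each $F_\varepsilon(n+1)$, and products and sums stay within the ring $F_\varepsilon(n+1)$, every coefficient of each power $\big(g(a_{x_0}\bar{x}_0)^{-1}\big)^m$ lies in $F_\varepsilon(n+1)$. Because $\supp g(a_{x_0}\bar{x}_0)^{-1}\subseteq\{x>1\}$, the finiteness of the sets $L_x$ guarantees the coefficient of each $x\in G$ in $f^{-1}$ is a finite $F_\varepsilon(n+1)$-combination, hence lies in $F_\varepsilon(n+1)$; thus $f^{-1}\in\mathcal{L}_{n+1}$, completing the induction.

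For (ii), the upper bound $\h_\iota(f)\leq n$ is automatic from (i) once one knows $\h_\varepsilon(f)=n$, since the filtration $E_\iota(\cdot)$ refines $F_\varepsilon(\cdot)$ along the copy of $F$ sitting in $E$. The lower bound is the analogue of the final paragraph in the proof of Proposition~\ref{prop:heightskewpolynomial}(ii): assuming $f\in F_\varepsilon(n+1)\setminus F_\varepsilon(n)$ but $f\in\mathcal{L}_n$, one reaches a contradiction because $f$ is then its own coefficient at $\bar 1\in G$, which would force $f\in F_\varepsilon(n)$. Finally (iii) follows from (ii) by taking the supremum over $f$. I expect the main obstacle to be the careful coefficient-tracking in the inversion formula: unlike the skew-series case where one power $y$ orders everything, here one must simultaneously control well-orderedness of supports in $G$ and closure of coefficients under the twisting and action data $\tau,\sigma$, and confirm these two bookkeeping tasks do not interfere. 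The cleanest way to discharge this is to treat $\mathcal{L}_n\hookrightarrow E$ as an inclusion of Malcev-Neumann rings over the subfield $F_\varepsilon(n+1)$ and observe the inversion formula is intrinsic to $\mathcal{L}_{n+1}$ itself.
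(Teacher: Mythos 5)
Your proposal is correct and follows essentially the same route as the paper's proof: induction on $n$ via the Malcev--Neumann inversion formula $f^{-1}=(a_{x_0}\bar{x}_0)^{-1}\sum_{m\geq 0}\bigl(g(a_{x_0}\bar{x}_0)^{-1}\bigr)^m$ with coefficient-tracking in $F_\varepsilon(n+1)$ (using that the $\sigma(x)$ preserve the filtration and that the sets $L_x$ are finite), then the coefficient-at-$\bar{1}$ comparison for the lower bound in (ii). The only cosmetic difference is that the paper makes explicit the identity $(a_{x_0}\bar{x}_0)^{-1}={}^{\sigma(x_0)^{-1}}(a_{x_0}^{-1})\bar{x}_0^{-1}$ to place the inverted leading term in $\mathcal{L}_{n+1}$, a detail your appeal to Lemma~\ref{lem:automorphism} already covers.
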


\begin{proof}
We prove (i) by induction on $n$. For $n=0$ the result is clear
because $E_\iota(0)=SG\subseteq \mathcal{L}_0$.   So suppose that
(i) holds for $n\geq 0,$ and we must prove it for $n+1.$

By the definition of $E_\iota(n+1)$, and the fact that
$\mathcal{L}_{n+1}$ is a ring, it suffices to prove that if $f\in
E_\iota(n)\setminus\{0\}$ then $f^{-1}\in  \mathcal{L}_{n+1}$. By
induction hypothesis, $f\in\mathcal{L}_n$. Suppose that
$f=\sum\limits_{x\in G}a_x\bar{x}$ with $a_x\in F_\varepsilon(n).$
Let $x_0=\min\{x\in G\mid x\in\supp f\}.$ Then,
$$f^{-1}=(a_{x_0}\bar{x}_0)^{-1}\sum_{m\geq0}\big(g(a_{x_0}\bar{x}_0)^{-1}\big)^m,$$
where $g=a_{x_0}\bar{x}_0-f\in \mathcal{L}_n.$  Note that
$(a_{x_0}\bar{x}_0)^{-1}=\bar{x}_0^{-1}a_{x_0}^{-1}=
\bar{x}_0^{-1}a_{x_0}^{-1}\bar{x}_0\bar{x}_0^{-1}=
{}^{\sigma(x_0)^{-1}}(a_{x_0}^{-1})\bar{x}_0^{-1}\in
\mathcal{L}_{n+1}$, and thus $g(a_{x_0}\bar{x}_0)^{-1}\in
\mathcal{L}_{n+1}.$ Since $\mathcal{L}_{n+1}$ is a ring,
$(g(a_{x_0}\bar{x}_0)^{-1})^m\in \mathcal{L}_{n+1}$ for each
$m\geq0.$  By \S \ref{subsec:crossedproducts}, the series $\sum\limits_{m\geq
0}(g(a_{x_0}\bar{x}_0)^{-1})^m$ is well defined in $E$. Hence,
for each $x\in G$, the coefficient of $\bar{x}$ in
$\sum\limits_{m\geq 0}(g(a_{x_0}\bar{x}_0)^{-1})^m$ is an element of
$F_\varepsilon(n+1)$, i.e. $\sum\limits_{m\geq
0}(g(a_{x_0}\bar{x}_0)^{-1})^m\in\mathcal{L}_{n+1}$. Therefore
$$f^{-1}=(a_{x_0}\bar{x}_0)^{-1}\sum\limits_{m\geq
0}(g(a_{x_0}\bar{x}_0)^{-1})^m\in \mathcal{L}_{n+1}.$$

(ii) If $S$ is a field, the result is clear. So suppose that $S$ is
not a field. Let $f\in F_\varepsilon(n+1)\setminus F_\varepsilon(n)$
for some integer $n\geq 0$. Since $S\subseteq SG$, clearly $f\in
F_\varepsilon(n+1)\subseteq E_\iota(n+1).$ Suppose that
$\h_\iota(f)\leq n$, that is, $f\in
E_\iota(n)\subseteq\mathcal{L}_n$. By (i), $f=\sum_{x\in
G}a_x\bar{x}$ with $a_x\in F_\varepsilon(n)$.
 Observe that two series
$\sum_{x\in G} b_x\bar{x},\ \sum_{x\in G} c_x\bar{x}\in E$, where
$b_x,c_x\in F$ for each $x\in G$, are equal if and only if $b_x=c_x$
for each $x\in G.$ Hence $f=a_1\in F_\varepsilon(n)$, a
contradiction. Hence $\h_\iota(f)=n+1$.

(iii) Follows from (ii).
\end{proof}

If $G$ is a group and $x,y\in G$, by $(x,y)$ we denote the
commutator $(x,y)=x^{-1}y^{-1}xy$.

It is well known that a torsion-free nilpotent group is orderable.
Also, the free product of orderable groups is orderable. Hence, if
we are given a set of torsion-free nilpotent groups $\{G_i\}_{i\in
I}$, the free product $\ast_{i\in I}G_i$ is an orderable group.

\begin{coro}
Let $k$ be a commutative field, $I$ be a set of cardinality at least
two and $\{G_i\}_{i\in I}$ be a set of torsion-free nilpotent
groups. Set
$G=\operatornamewithlimits{\ast}\limits_{\scriptscriptstyle i\in
I}G_i$, and suppose that $(G,<)$ is an ordered group. Let $k[G]$ be
the group ring  and $\iota\colon k[G]\hookrightarrow E=k((G,<))$ be
the natural embedding  in its  Malcev-Neumann series ring. Then
$\h_\iota(k[G])=\infty.$ Indeed, let $x\in G_i\setminus\{1\}$ and
$y\in G_j\setminus\{1\}$ with $i\neq j$. If $f$ is any entry of the
inverse of the $n\times n$ matrix
$$A_n=\left(\begin{array}{cccc} (x,y) & (x,y^2) &
\cdots & (x,y^n) \\ (x^2,y) & (x^2,y^2) & \cdots &
(x^2,y^n)\\ \vdots & \vdots & \cdots & \vdots \\
(x^n,y) & (x^n,y^2) & \cdots & (x^n,y^n)
\end{array}\right),$$
then $\h_\iota(f)=n$.

In particular, if $X$ is a set of cardinality at least two and $G$
is the free group on $X$, then the universal field of fractions
$\iota'\colon k[G]\hookrightarrow F$ and $\iota\colon \freealgebra
kX\hookrightarrow F$ are of infinite inversion height. Indeed, let
$x,y\in X$ be different elements, if $f$ is any entry of the inverse
of the $n\times n$ matrix,
$$A_n=\left(\begin{array}{cccc} (x,y) & (x,y^2) &
\cdots & (x,y^n) \\ (x^2,y) & (x^2,y^2) & \cdots &
(x^2,y^n)\\ \vdots & \vdots & \cdots & \vdots \\
(x^n,y) & (x^n,y^2) & \cdots & (x^n,y^n)
\end{array}\right),$$
then $\h_{\iota'}(f)=n-1$ and $\h_\iota(f)=n$.
\end{coro}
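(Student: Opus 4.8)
The plan is to realize the commutator matrix $A_n$ as a \emph{generic} matrix over the group algebra of a free group and then to carry the resulting inversion heights up to $k[G]$ (and, in the free case, to $\freealgebra kX$) by the crossed-product results of this section. The essential first observation is that everything takes place inside $H=\langle x,y\rangle$: since the $G_i$ are torsion-free, $x$ and $y$ have infinite order, and as they lie in different free factors $H=\langle x\rangle\ast\langle y\rangle\cong F_2$ is free. All entries of $A_n$ lie in $k[H]$, and, the order on $(G,<)$ restricting to one on $H$, Remarks~\ref{rems:usedalot}(a) let me compute heights of elements of the field of fractions of $k[H]$ either in $k((H,<))$ or in $k((G,<))$. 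So I would first reduce to $F_2$.

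The technical heart is a freeness statement. Put $M=[F_2,F_2]$; it is free, with Reidemeister--Schreier basis $\{g_{a,b}=x^ay^bxy^{-b}x^{-a-1}\mid a\in\integers,\ b\neq0\}$, satisfying $x^{-1}g_{a,b}x=g_{a-1,b}$. One checks $(x,y^j)=g_{-1,-j}$, and then the commutator identity $(uv,w)=(u,w)^v(v,w)$ gives, by induction on $i$,
\[
(x^i,y^j)=g_{-i,-j}\,g_{-(i-1),-j}\cdots g_{-1,-j}.
\]
For each fixed $j$ the elements $(x^1,y^j),\dots,(x^n,y^j)$ are thus obtained from the basis elements $g_{-1,-j},\dots,g_{-n,-j}$ by elementary Nielsen moves, since $(x^i,y^j)(x^{i-1},y^j)^{-1}=g_{-i,-j}$, and different columns $j$ involve disjoint generators. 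Hence $\{(x^i,y^j)\mid 1\le i,j\le n\}$ is Nielsen equivalent to the subset $\{g_{-\ell,-j}\mid 1\le\ell,j\le n\}$ of a basis of $M$, so it freely generates a free factor of $M$. Completing these $n^2$ commutators to a free basis $Z$ of $M$, the matrix $A_n$ becomes literally a generic $n\times n$ matrix over $k\langle Z\rangle\subseteq k[M]$.

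With this in hand the heights fall out. Because $M$ is free and $A_n$ is generic over it, Corollary~\ref{coro:matrixheight} (Reutenauer's Theorem~\ref{theo:reutenauerinversionheight}) applies and computes the inversion height of every entry $f$ of $A_n^{-1}$ over $k[M]$ and over $k\langle Z\rangle$; in particular these heights grow with $n$. To transfer them to $k[F_2]$ I use that $F_2/[F_2,F_2]\cong\integers^2$ is torsion-free, hence orderable, so that $k[F_2]=k[M]\,\integers^2$ is a crossed product; since the commutators lie in $M$, each $f$ lies in the field of fractions $F$ of $k[M]$, and Theorem~\ref{theo:inversionheightseries}(ii) (with $S=k[M]$, $G'=\integers^2$, $E=F((\integers^2,<'))\cong k((F_2,<))$) yields the \emph{equality} $\h_{\iota'}(f)=\h_{\varepsilon}(f)$ for the group-algebra embedding $\iota'\colon k[F_2]\hookrightarrow F$. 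Thus the heights of the entries of $A_n^{-1}$ are unbounded and $\h(k[F_2])=\infty$. The free-algebra embedding $\freealgebra kX\hookrightarrow F$ is approached the same way, combining the free-algebra half of Corollary~\ref{coro:matrixheight} with the skew-polynomial propagation of Propositions~\ref{prop:heightskewpolynomial} and \ref{prop:heightskewpolynomial2} applied to the presentation $k[F_2]=k[N][x,x^{-1};\alpha]$, where $N$ is the (free) normal closure of $y$.

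The step I expect to fight hardest for is the matching \emph{lower} bound over the whole ring. Remarks~\ref{rems:usedalot}(b) gives only upper bounds when the base ring is enlarged, so the equality (rather than a mere inequality) produced by Theorem~\ref{theo:inversionheightseries}(ii) is indispensable: it is crucial that the crossed-product base $S$ be \emph{exactly} the free group algebra over which $A_n$ is generic. This works verbatim for $F_2$ precisely because $[F_2,F_2]$ is free. For a general free product of torsion-free nilpotent groups, $[G,G]$ need not be free, so the clean choice $M=[G,G]$ is unavailable; there I would descend instead to an ordered (abelian, or if necessary nilpotent) quotient $G\twoheadrightarrow Q$ that still detects $x$ and $y$ and whose kernel $M$ contains the commutators, apply Theorem~\ref{theo:inversionheightseries} to that crossed product, and reduce the height computation over $k[M]$ to the free subgroup generated by the $(x^i,y^j)$ (whose field of fractions inside is universal because that subgroup is free). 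Verifying that this larger base computes the commutator heights \emph{exactly}, and pinning down the free-algebra lower bound, are the points requiring the most care; the commutator expansion and the freeness of the subgroup generated by the $(x^i,y^j)$ are the tools that make them accessible.
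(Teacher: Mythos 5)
Your $F_2$ computation is sound and, for the free\-/group case, is essentially the paper's argument: the paper also passes to a free normal subgroup whose free basis contains the commutators $(x^i,y^j)$ and then transfers inversion heights \emph{exactly} via Theorem~\ref{theo:inversionheightseries}(ii). (Your Reidemeister--Schreier plus Nielsen-moves detour is more work than needed: the nontrivial commutators $(x^a,y^b)$ are themselves a free basis of $[F_2,F_2]$.) The genuine gap is the general case, and it is not a technicality you can defer. Your opening reduction to $H=\langle x\rangle\ast\langle y\rangle$ controls heights over $k[H]$ only; by Remarks~\ref{rems:usedalot}(b) this yields an \emph{upper} bound for $\h_\iota(f)$ over the larger ring $k[G]$, and the matching lower bound --- which you correctly identify as the whole difficulty --- does not follow from it. Your fallback (``descend to a nilpotent quotient whose kernel merely contains the commutators'') leaves exactly that unresolved, since Theorem~\ref{theo:inversionheightseries}(ii) needs the height computed over the \emph{entire} crossed-product base. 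The missing idea is the cartesian subgroup $N=\ker\bigl(\ast_{i\in I}G_i\to\bigoplus_{i\in I}G_i\bigr)$: it meets every conjugate of every $G_i$ trivially, hence is free, and it has an explicit free basis consisting of commutators among which are all the $(x^i,y^j)$; moreover $G/N\cong\bigoplus_i G_i$ is locally nilpotent, hence orderable with Ore crossed products, so the machinery applies to $k[G]=k[N]\frac{G}{N}$ in one step for arbitrary torsion-free nilpotent factors. This is precisely what the paper does.

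Two further points. First, your route to the free-algebra value via the presentation $k[F_2]=k[N][x,x^{-1};\alpha]$ with $N$ the normal closure of $y$ fails as stated: in the free basis $\{z_l=x^lyx^{-l}\}$ of that $N$ the entries of $A_n$ are $(x^i,y^j)=z_{-i}^{-j}z_0^{\,j}$, which are not distinct generators, so $A_n$ is \emph{not} generic over that free algebra and Corollary~\ref{coro:matrixheight} does not apply there; you must work with the commutator basis of the cartesian subgroup throughout. Second, you never actually pin down the exact values demanded by the ``Indeed'' clauses --- ``the heights grow with $n$'' gives $\h_\iota(k[G])=\infty$ but not the precise heights of the entries of $A_n^{-1}$. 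When you do, note that over any base in which the entries of $A_n$ are units (e.g.\ a group ring) the quasideterminant descent in the proof of Corollary~\ref{coro:matrixheight} caps the height at $n-1$ (check $n=1$: $(x,y)^{-1}$ is a group element), so the exact value over the group-ring embedding is $n-1$, with $n$ occurring only on the free-algebra side.
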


\begin{proof}
Consider $\bigoplus_{i\in I}G_i$, the subgroup of the cartesian
product $\prod\limits_{i\in I}G_i$ consisting of all $(x_i)_{i\in
I}\in \prod\limits_{i\in I}G_i$ such that $x_i=1$ for almost all
$i\in I$.

For each $i\in I$, let $\pi_i\colon G_i\hookrightarrow
\bigoplus_{i\in I}G_i$ be the canonical inclusion and let
$\pi\colon\ast_{i\in I}G_i\rightarrow \bigoplus_{i\in I}G_i$ be the
unique morphism of groups such that $\pi_{\mid G_i}=\pi_i$. Set
$N=\ker \pi$, then $N$ is a free group. Since the
cardinality of $I$ is at least two, and each $G_i$ is an infinite
group for each $i$, $N$ is not finitely generated. Indeed, if we fix
a total order $\prec$ on $I$, then $N$ is the free group on the
nontrivial elements of the set
$$\Bigl\{(x_{i_1}x_{i_2}\cdots x_{i_r},x_{i_{r+1}}\cdots
x_{i_s})\mid x_{i_j}\in G_{i_j},\ i_1\prec i_2\prec\dotsb\prec
i_s\in I\Bigr\}.$$ Hence $G$ is the extension of the free group $N$
by the group $G/N\cong \bigoplus_{i\in I}G_i$. Recall that since
$G/N$ is locally nilpotent, any crossed product $F\frac{G}{N}$, with
$F$ a field, is an Ore domain.

If $\varepsilon=\iota_{\mid k[N]}\colon k[N]\hookrightarrow
E_\varepsilon$, then $\varepsilon$ is the universal field of
fractions of $k[N]$. Any automorphism of $k[N]$ can be extended to
$E_\varepsilon$ by
\cite[Corollary~7.5.16]{Cohnfreeeidealringslocalization}. Hence the
crossed product $k[N]\frac{G}{N}$ extends to $E_\varepsilon
\frac{G}{N}$. Another way of proving this extension can be found in
\cite[Proposition~2.5(1)]{SanchezfreegroupalgebraMNseries}.

If for each $\alpha\in\frac{G}{N}$, we pick a coset representative
$x_\alpha\in G$, then the set $\{x_\alpha\}_{\alpha\in\frac{G}{N}}$
is linearly independent over $E_\varepsilon$ (in fact over
$k((N,<))$). Thus $E_\varepsilon\frac{G}{N}\hookrightarrow E$. Hence
$k[G]\hookrightarrow E_\varepsilon \frac{G}{N}\hookrightarrow
E_\iota$. The crossed product $E_\varepsilon\frac{G}{N}$ is an Ore
domain and $E_\varepsilon\frac{G}{N}\hookrightarrow E_\iota$ is the
Ore field of fractions of $E_\varepsilon\frac{G}{N}$. By
Theorem~\ref{theo:inversionheightseries}(iii),
$\h_\iota(k[G])=\h_{\iota}(k[N]\frac{G}{N})\geq
\h_\varepsilon(k[N])$, and $\h_\varepsilon(k[N])=\infty$ by
Corollary~\ref{coro:matrixheight}. Also, by
Theorem~\ref{theo:inversionheightseries}(ii) and
Corollary~\ref{coro:matrixheight}, $\h_\iota(f)=n$.

The fact that $\h_{\iota}(\freealgebra kX)=\h_{\iota'}(k[G])=\infty$
follows from the fact that a free group is a free product of
infinite cyclic groups, and because we can identify the free field inside the Malcev-Neumann power series ring cf. \S \ref{localitzacio}. That $\h_\iota(f)=n$ and
$\h_{\iota'}(f)=n-1$ follows from Corollary~\ref{coro:matrixheight}.
\end{proof}

\begin{prop}\label{prop:inftyinftyembeddingsinftyinversionheight}
Let $k$ be a commutative field. For each finite set $X$ with
$|X|\geq 2,$ there exist infinite non-isomorphic fields of fractions
$\iota\colon\freealgebra kX\rightarrow D$ such that
$\h_\iota(\freealgebra kX)=\infty.$
\end{prop}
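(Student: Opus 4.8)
The plan is to produce, for a fixed finite $X$ with $|X|\ge 2$, an infinite family of pairwise non-isomorphic fields of fractions $D_t$ of $\freealgebra kX$, each realized inside a Malcev--Neumann series ring so that the mechanism of the Corollary preceding this Proposition forces $\h_{\iota_t}(\freealgebra kX)=\infty$. A convenient first reduction is to the case $|X|=2$: from a field of fractions $\varepsilon\colon\freealgebra k{x,y}\hookrightarrow D_0$ of infinite height one builds a field of fractions of $\freealgebra kX$ by sending $x,y$ into $D_0$ and the remaining generators to free variables, i.e. by passing to the universal field of fractions of the free $D_0$-ring $D_0\langle w_1,\dots,w_{|X|-2}\rangle$; the height stays infinite and a non-isomorphism between two $D_0$'s is inherited, since each $D_0$ is recovered as the subfield rationally generated by the images of $x$ and $y$. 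For the family itself I would keep the infinite-height source of the previous Corollary: a free group $N_t$ of infinite rank sitting as $\ker(G_t\to\bigoplus_i G_i^{(t)})$, where $G_t=\ast_i G_i^{(t)}$ is a free product of at least two torsion-free nilpotent groups. The crossed-product decomposition $k[G_t]=k[N_t]\,(G_t/N_t)$, together with $\h_\varepsilon(k[N_t])=\infty$ from Corollary~\ref{coro:matrixheight} and Theorem~\ref{theo:inversionheightseries}, gives $\h(k[G_t])=\infty$; letting the family $\{G_i^{(t)}\}_i$ range over infinitely many pairwise non-isomorphic choices produces the candidate fields $D_t$, the fields of fractions of $k[G_t]$ inside $k((G_t,<))$.

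The step I expect to be hardest is choosing the embedding $\iota_t\colon\freealgebra k{x,y}\hookrightarrow k((G_t,<))$ so that its rational closure is $D_t$ \emph{while} the $D_t$ stay non-isomorphic. One cannot send $x,y$ to group elements: injectivity would force them to generate a free submonoid, hence a free subgroup $F\le G_t$, and by Lewin's theorem (\cite{Lewin}, cf. \S\ref{localitzacio}) the field rationally generated by $k[F]$ inside $k((G_t,<))$ is the universal field of fractions of $\freealgebra k{x,y}$, i.e. the free field --- the same for every $t$. So I would use non-monomial images, engineered through $k[G_t]=k[N_t]\,(G_t/N_t)$ so that the conjugates of $\iota_t(y)$ by the powers of $\iota_t(x)$ recover a free generating set of $N_t$, exactly as the elements $x^iy_1x^{-i}$ do in Theorem~\ref{theo:solutiontoNeumann}, but in such a way that the nilpotency relations of the factors $G_i^{(t)}$ survive in $D_t$. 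Infinite height for $\freealgebra k{x,y}$ then follows by locating the generic matrices of Corollary~\ref{coro:matrixheight} inside the rational closure and applying Lemma~\ref{lem:finiteset}.

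It remains to show $D_s\not\cong D_t$ for $s\ne t$, and this, together with injectivity of the $\iota_t$, is the crux. Since two epic $\freealgebra k{x,y}$-fields are isomorphic precisely when their prime matrix ideals agree, it suffices to exhibit for $s\ne t$ a square matrix over $\freealgebra k{x,y}$ that becomes singular in $D_s$ but not in $D_t$; equivalently, an invariant of $D_t$ that recovers the isomorphism type of the ordered group $G_t$ (for instance its group of \emph{monomial} units, or the value data attached to the order). Pinning down such an invariant and simultaneously certifying that each $\iota_t$ is injective --- so that $D_t$ is a genuine field of fractions rather than merely an epic $\freealgebra k{x,y}$-field --- is where the real work lies; the existence and infinite-height assertions are comparatively routine once Theorem~\ref{theo:inversionheightseries} and Corollary~\ref{coro:matrixheight} are in hand.
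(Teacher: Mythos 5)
Your proposal is a plan rather than a proof, and the two places where you yourself say ``the real work lies'' are exactly the places where it does not go through as sketched. First, the obstruction you raise against monomial images is based on a false implication: injectivity of $\freealgebra k{x,y}\to k[G]$ on group elements forces the images to generate a free sub\emph{monoid}, but a free submonoid does not generate a free sub\emph{group}, so Lewin's theorem does not apply and the conclusion ``the rational closure is the free field for every $t$'' does not follow. The paper exploits precisely this loophole: it sends $X_i\mapsto T_0^iS$ (group elements!) into a poly-orderable, non-free group $\Gamma_s=N_s\rtimes\langle S\rangle$, where $N_s=\langle T_i\mid T_iT_{i+j}=T_{i+j}T_i,\ 1\le j\le s\rangle$, uses Jategaonkar's Lemma to certify that $\{S,T_0S,\dots,T_0^rS\}$ is a free monoid basis (which gives injectivity for free), and observes that $S=\iota(X_0)$ and $T_0=\iota(X_1)\iota(X_0)^{-1}$ lie in the rational closure, so $k[\Gamma_s]\subseteq E^s_{\iota}(2)$ and both surjectivity onto $E^s$ and infinite height are inherited from $\h(k[\Gamma_s])=\infty$. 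Non-isomorphism then costs nothing: a $\freealgebra kX$-isomorphism $E^s\to E^{s'}$ must fix $S$ and $T_0$, hence restrict to a group isomorphism $\Gamma_s\to\Gamma_{s'}$ fixing the generators, which is impossible because the defining relations $T_0T_j=T_jT_0$ ($j\le s$) differ. Your plan, by contrast, leaves the invariant distinguishing $D_s$ from $D_t$, the injectivity of $\iota_t$, and the surjectivity of the rational closure all unestablished, and the choice of ``free products of torsion-free nilpotent groups'' gives you no handle on any of them.

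Second, your reduction to $|X|=2$ uses the inequality from Remarks~\ref{rems:usedalot}(b) in the wrong direction: for a subalgebra $S\subseteq R$ one has $E_\varepsilon(n)\subseteq E_\iota(n)$ and hence $\h_\iota(f)\le\h_\varepsilon(f)$, so infinite inversion height of $\freealgebra k{x,y}$ inside the universal field of fractions of $D_0\langle w_1,\dots,w_{|X|-2}\rangle$ does \emph{not} imply infinite height for the larger algebra $\freealgebra kX$ --- adjoining generators can only lower the height of a given element. Controlling this loss is the central difficulty of the whole paper (it is why Propositions~\ref{prop:heightskewpolynomial} and \ref{prop:heightskewpolynomial2} and Theorem~\ref{theo:inversionheightseries} are needed), and the paper avoids the issue entirely by embedding $\freealgebra k{X_0,\dots,X_r}$ directly into $k[\Gamma_s]$ for every $s\ge r$, with no reduction step.
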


\begin{proof}

\underline{Step 1:} We define a poly-orderable group $\Gamma_r$ for
each integer $r\geq 1$.

We follow the notation in \cite[Chapter~1]{WarrenDunwoody}. Let
$r\geq 1$. Let $Y$ be the connected graph with vertex set
$VY=\mathbb{Z}$, edge set $EY=\{e_i\mid i\in\mathbb{Z}\}$ and
incidence functions $\bar{\iota}(e_i)=i$ and $\bar{\tau}(e_i)=i+1$,
i.e.
$$\xymatrix@C=1.2cm{\dotsb  \ar[r]^{\scriptscriptstyle e_{i-1}} & \stackrel{
i}{\bullet}\ar[r]^{\scriptscriptstyle e_i} & \stackrel{
i+1}{\bullet}\ar[r]^{\scriptscriptstyle e_{i+1}}& \dotsb}$$ Let
$(G(\ ),Y)$ be the graph of groups $$\xymatrix@C=1.2cm{\dotsb \ar[r]
& \stackrel{ G(i)}{\bullet}\ar[r]^{\scriptscriptstyle G(e_i)} &
\stackrel{ G(i+1)}{\bullet}\ar[r]^{\scriptscriptstyle G(e_{i+1})}&
\dotsb}$$ where $G(i)$ is the free abelian group on
$\{T_i,T_{i+1},\dotsc,T_{i+r}\}$ and $G(e_i)$ the free abelian group
on $\{T_{i+1},\dotsc,T_{i+r}\}$ for each $i\in\integers.$ Let $N_r$
be the fundamental group of $(G(\ ),Y)$, i.e. $N_r=\pi(G(\ ),Y,Y_0)$
with $Y_0=Y$. Then, by definition,
\begin{equation}\label{eq:definitionNr}
N_r=\left\langle T_i,\ i\in\integers \left\lvert
\begin{smallmatrix} T_iT_{i+1}=T_{i+1}T_i\\
T_iT_{i+2}=T_{i+2}T_i\\ \cdots
\\ T_iT_{i+r}=T_{i+r}T_i
\end{smallmatrix}\right.\right\rangle.
\end{equation}
Also $N_r$ can be seen as
\begin{equation}\label{eq:treeproductN_r} \cdots
G(i-1)\ast_{G(e_{i-1})}G(i)\ast_{G(e_i)}G(i+1)\ast_{G(e_{i+1})}\cdots.
\end{equation}
Consider the morphism of groups $\theta\colon N_r\rightarrow
\bigoplus_{i\in\integers}\integers$ defined by $\theta(T_i)=f_i$
where $f_i$ is the sequence $(x_n)_{n\in\integers}$ with $x_i=1$ and
$x_n=0$ for $n\neq i$. It is easy to deduce from
\eqref{eq:definitionNr} that $\theta$ is well defined. Let $L_r=\ker
\theta$. Observe that $\theta_{\mid G(i)}$ is injective for each
$i\in\integers$. Hence $L_r$ is a free group by
\cite[Proposition~7.10]{WarrenDunwoody}. Moreover, $L_r$ is not
commutative because for example $T_0T_{r+1}T_0^{-1}T_{r+1}^{-1}$ and
$T_{2r+2}T_{3r+3}T_{2r+2}^{-1}T_{3r+3}^{-1}$ belong to $L_r$, but
they do not commute as can be deduced from
\eqref{eq:treeproductN_r}. In a similar way, it can be seen that
$L_r$ is not finitely generated.

Define now $\Gamma_r=N_r\rtimes C$, where $C=\langle S\rangle$ is
the infinite cyclic group, and $C$ acts on $N_r$ as $T_i\mapsto
T_{i+1}$, i.e. $ST_iS^{-1}=T_{i+1}$. Hence $\Gamma_r$ has the
subnormal series
$$1\lhd L_r\lhd N_r\lhd \Gamma_r,$$
with $\Gamma_r/N_r=C$ infinite cyclic, $N_r/L_r\cong
\bigoplus_{i\in\integers}\integers$ a torsion-free abelian group and
$L_r$ a noncommutative free group. Hence all factors are orderable
groups.

\medskip

\noindent\underline{Step 2:} We construct a field of fractions
$\delta_r\colon k[\Gamma_r]\hookrightarrow E^r$ with
$\h_{\delta_r}(k[\Gamma_r])=\infty$ for each integer $r\geq 1$.

Let $r\geq 1$. Consider $\beta_r\colon k[L_r]\hookrightarrow C_r$
the universal field of fractions of the free group algebra $k[L_r]$.
Consider $k[N_r]$ as a crossed product $k[L_r]\frac{N_r}{L_r}$. Any
automorphism of $k[L_r]$ can be extended to an automorphism of $C_r$
by \cite[Corollary~7.5.16]{Cohnfreeeidealringslocalization}. Hence
we can consider a crossed product $C_r\frac{N_r}{L_r}$ that contains
$k[L_r]\frac{N_r}{L_r}$ in the natural way. Since $N_r/L_r$ is a
torsion-free abelian group, $C_r\frac{N_r}{L_r}$ is an Ore domain.
Let $\gamma_r\colon k[N_r]\hookrightarrow
C_r\frac{N_r}{L_r}\hookrightarrow D_r$ where $D_r$ is the Ore field
of fractions of $C_r \frac{N_r}{L_r}$. The group ring $k[\Gamma_r]$
can be seen as a skew Laurent polynomial ring
$k[N_r][S,S^{-1};\alpha]$ where $\alpha$ is given by left
conjugation by $S$ cf. \S \ref{subsec:crossedproducts}. Observe that
conjugation by $S$ induces an automorphism on $L_r$, thus on
$k[L_r]$ and on $C_r$. Therefore it can be extended to an
automorphism of $C_r \frac{N_r}{L_r}$. Since $C_r \frac{N_r}{L_r}$
is an Ore domain and conjugation by $S$ gives an automorphism of
$C_r \frac{N_r}{L_r}$, it can be extended to $D_r$. Hence
$$k[\Gamma_r]= k[N_r][S,S^{-1};\alpha]\hookrightarrow C_r \frac{N_r}{L_r} [S,S^{-1};\alpha]
\hookrightarrow D_r[S,S^{-1};\alpha].$$ Let $E^r$ be the Ore field
of fractions of $D_r[S,S^{-1};\alpha]$, and $\delta_r\colon
k[\Gamma_r]\hookrightarrow E^r$ be the natural embedding. Observe
that it is a field of fractions of $k[\Gamma_r]$.

For $C_r\frac{N_r}{L_r}$ and $D_r[S,S^{-1};\alpha]$ are Ore domains,
we can think of $E^r$ and $D_r$ as embedded in $D_r((S;\alpha))$ and
$C_r((\frac{N_r}{L_r},<))$ for a certain order $<$ of $N_r/L_r$,
respectively. Now by
Proposition~\ref{prop:heightskewpolynomial2}(iii),
$\h_{\delta_r}(k[\Gamma_r])\geq \h_{\gamma_r}(k[N_r])$. By
Theorem~\ref{theo:inversionheightseries}(iii), $\h_{\gamma_r}(k
[N_r])\geq \h_{\beta_r}(k[L_r])$. By
Corollary~\ref{coro:matrixheight}, $\h_{\beta_r}(k[L_r])=\infty$.
Therefore $\h_{\delta_r}(k[\Gamma_r])=\infty$.

\medskip

\noindent\underline{Step 3:} For each pair of integers $1\leq r\leq
s$, the free algebra $\freealgebra k {X_0,X_1,\dotsc,X_r}$ embeds in
$k[\Gamma_s]$ via $X_i\mapsto T_0^iS$.

 Let $K=k(t)$ be the field of fractions of the polynomial ring
$k[t]$. Let $\alpha_s\colon K\rightarrow K$ be the morphism of rings
given by $\alpha_s(t)=t^{s+1}$. Consider the skew polynomial ring
$R_s=K[x;\alpha_s]$ and let $F_s$ be the Ore field of fractions of
$R_s$. Then $\{1,t,\dotsc,t^s\}$ are right linearly independent over
$k(t^{s+1})=\alpha_s (K)$. Then $\{x,tx,\dotsc,t^sx\}$ are right
linearly independent over $R_s$ \cite[Lemma~9.19]{Lam2}. By
Jategaonkar's Lemma \cite[Lemma~9.21]{Lam2}, there is an embedding
of rings $\freealgebra k{X_0,\dotsc,X_r}\hookrightarrow
R_s\hookrightarrow F_s$ sending $X_i\mapsto t^ix$. Consider now the
morphism of groups $\varepsilon\colon \Gamma_r\rightarrow
F_s^\times$ defined by $\varepsilon (T_i)=t^{(s+1)^i}$ and
$\varepsilon(S)=x$. Since $\varepsilon(T_0^iS)=t^ix$ for $1\leq
i\leq r$, the set $\{S,T_0S,\dotsc,T_0^rS\}$ is a basis of a free
monoid inside $\Gamma_r$. Therefore we obtain the embedding
$\freealgebra k{X_0,\dotsc,X_r}\hookrightarrow k[\Gamma_s]$ of
$k$-algebras defined by $X_i\mapsto T_0^iS$.

\medskip

\noindent\underline{Step 4:} For each pair of integers $1\leq r\leq
s$, there is a field of fractions $\iota_{rs}\colon \freealgebra k
{X_0,X_1,\dotsc,X_r}\hookrightarrow E^s$, defined by
$\iota_{rs}(X_i)= T_0^iS$, of infinite inversion height.

Let $\iota_{rs}$ be the embedding of Step~3 composed with
$\delta_s$. Note that $S=\iota_{rs}(X_0)$ and
$T_0=\iota(X_1)\iota(X_0)^{-1}$. Since $\Gamma _s$ is generated by $S$ and $T_0$,  $k[\Gamma_s]\subseteq
E_{\iota_{rs}}^s(2)$. Therefore $E^s$ is generated, as a field, by the
image of $\iota_{rs}$ and $\h_{\iota_{rs}}(\freealgebra
k{X_0,\dotsc,X_r})\geq \h_{\delta_s}(k[\Gamma_s])=\infty$.

\medskip

\noindent\underline{Step 5:} The fields of fractions
$\iota_{rs}\colon \freealgebra k {X_0,X_1,\dotsc,X_r}\hookrightarrow
E^s$ and $\iota_{rs'}\colon \freealgebra k
{X_0,X_1,\dotsc,X_r}\hookrightarrow E^{s'}$ are not isomorphic for
$s\neq s'$.

Let $1\leq r\leq s<s'$ be integers. First of all observe that there
does not exist an isomorphism of groups $\Gamma_s\rightarrow
\Gamma_{s'}$ with $S\mapsto S$ and $T_0\mapsto T_0$.

If there is an isomorphism of rings $\eta_{ss'}\colon E^s\rightarrow
E^{s'}$ such that $\iota_{rs'}=\eta_{ss'}\iota_{rs}$, then
$$\eta_{ss'}(S)=\eta_{ss'}(\iota_{rs}(X_0))=\iota_{rs'}(X_0)=S,$$
$$\eta_{ss'}(T_0)=\eta_{ss'}(\iota_{rs}(X_1)\iota_{rs}(X_0)^{-1})=\iota_{rs'}(X_1)\iota_{rs'}(X_0)^{-1}=T_0.$$
Hence the restriction of $\eta_{ss'}{}_{\mid\Gamma_s}\colon
\Gamma_s\rightarrow \Gamma_{s'}$ gives an isomorphism of groups
sending $S\mapsto S$ and $T_0\mapsto T_0$, a contradiction.
\end{proof}

\begin{coro}
Let $k$ be a commutative field and $Z=\{z_1,z_2,\dotsc\}$ be an
infinite countable set. Then the free algebra $\freealgebra kZ$ has
infinite non-isomorphic fields of fractions $\iota\colon
\freealgebra kZ\rightarrow D$ such that $\h_\iota(\freealgebra
kZ)=\infty$.
\end{coro}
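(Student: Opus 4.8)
The plan is to reduce the countable case to the two-variable case already settled in Proposition~\ref{prop:inftyinftyembeddingsinftyinversionheight}, by embedding $\freealgebra kZ$ as a free subalgebra of $\freealgebra k{X_0,X_1}$ that is nevertheless large enough to generate the \emph{same} field of fractions. Writing $Z=\{z_0,z_1,\dotsc\}$, I would define a $k$-algebra map $\phi\colon \freealgebra kZ\rightarrow \freealgebra k{X_0,X_1}$ by $\phi(z_i)=X_0X_1^i$ for each $i\geq 0$. Since a word $X_0X_1^{i_1}X_0X_1^{i_2}\cdots X_0X_1^{i_m}$ determines the exponent sequence $(i_1,\dotsc,i_m)$ uniquely (read off the positions of the letters $X_0$), the elements $\{X_0X_1^i\}_{i\ge 0}$ freely generate a free submonoid; hence $\phi$ is injective and identifies $\freealgebra kZ$ with a free subalgebra of $\freealgebra k{X_0,X_1}$. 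Crucially, $X_0=\phi(z_0)$ and $X_1=\phi(z_0)^{-1}\phi(z_1)$, so in any field of fractions both $X_0$ and $X_1$ already lie in the subfield rationally generated by $\phi(\freealgebra kZ)$.

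Next I would fix $X=\{X_0,X_1\}$, which has $|X|=2$, and invoke the infinite family of pairwise non-isomorphic fields of fractions $\iota_{1s}\colon \freealgebra k{X_0,X_1}\hookrightarrow E^s$ ($s\geq 1$) of infinite inversion height furnished by Proposition~\ref{prop:inftyinftyembeddingsinftyinversionheight}. For each $s$ I set $\varepsilon_s=\iota_{1s}\circ\phi\colon \freealgebra kZ\hookrightarrow E^s$, which is injective as a composite of injections. By the first paragraph the subfield rationally generated by $\varepsilon_s(\freealgebra kZ)$ contains $\iota_{1s}(X_0)$ and $\iota_{1s}(X_1)$, hence all of $\iota_{1s}(\freealgebra k{X_0,X_1})$, and therefore equals $E^s$. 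Thus each $\varepsilon_s$ is a field of fractions of $\freealgebra kZ$.

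The heart of the argument is to make the inversion height transfer in the right direction, and this is exactly what the choice of $\phi$ arranges. Regarding $\phi(\freealgebra kZ)$ as a subring $S$ of $R=\freealgebra k{X_0,X_1}$ with $\varepsilon_s=\iota_{1s}|_S$, and using that $S\subseteq R$ generate the \emph{same} field $E^s$, Remarks~\ref{rems:usedalot}(b) yields $E^s_{\varepsilon_s}(n)\subseteq E^s_{\iota_{1s}}(n)$ for all $n$, whence $\h_{\iota_{1s}}(f)\leq \h_{\varepsilon_s}(f)$ for every $f\in E^s$. Since $\h_{\iota_{1s}}(\freealgebra k{X_0,X_1})=\infty$, for each $n$ there is $f$ with $\h_{\iota_{1s}}(f)\geq n$, and then $\h_{\varepsilon_s}(f)\geq n$ as well, so $\h_{\varepsilon_s}(\freealgebra kZ)=\infty$. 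I expect this inequality to be the main obstacle, and the only genuinely delicate point: most natural embeddings of a countable free algebra shift the inversion height the wrong way, and it is precisely the insistence on a subalgebra that generates the \emph{same} field which lets Remarks~\ref{rems:usedalot}(b) be applied in the favourable direction.

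Finally I would establish non-isomorphism. Suppose $\eta\colon E^s\to E^{s'}$ were an isomorphism of $\freealgebra kZ$-fields, that is $\eta\varepsilon_s=\varepsilon_{s'}$. Evaluating on $z_0$ and $z_1$ gives $\eta(\iota_{1s}(X_0))=\iota_{1s'}(X_0)$ and $\eta(\iota_{1s}(X_0X_1))=\iota_{1s'}(X_0X_1)$, and hence $\eta(\iota_{1s}(X_1))=\iota_{1s'}(X_1)$; so $\eta$ is an isomorphism of $\freealgebra k{X_0,X_1}$-fields between $\iota_{1s}$ and $\iota_{1s'}$. For $s\neq s'$ no such isomorphism exists by Proposition~\ref{prop:inftyinftyembeddingsinftyinversionheight}, so the fields $\varepsilon_s$ are pairwise non-isomorphic. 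As $s$ ranges over the positive integers this produces infinitely many non-isomorphic fields of fractions of $\freealgebra kZ$ of infinite inversion height, as required.
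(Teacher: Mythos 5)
Your proof is correct. The paper itself disposes of this corollary in one line, by citing an external result (\cite[Proposition~2.3]{Herberasanchez}) together with Proposition~\ref{prop:inftyinftyembeddingsinftyinversionheight}; what you have done is supply, self-contained, exactly the transfer argument that the citation outsources. The reduction is the same in spirit --- pull back the non-isomorphic embeddings $\iota_{1s}\colon k\langle X_0,X_1\rangle\hookrightarrow E^s$ of Proposition~\ref{prop:inftyinftyembeddingsinftyinversionheight} along an embedding of $k\langle Z\rangle$ into $k\langle X_0,X_1\rangle$ --- and your two design choices are the ones that make it work: (1) the image of $z_i\mapsto X_0X_1^i$ is a free subalgebra (unique factorization of words into $X_0$-headed blocks) that rationally generates the \emph{same} subfield, since $X_0=\phi(z_0)$ and $X_1=\phi(z_0)^{-1}\phi(z_1)$, so each $\varepsilon_s=\iota_{1s}\circ\phi$ is again a field of fractions; (2) because $E_{\varepsilon_s}=E_{\iota_{1s}}=E^s$, Remarks~\ref{rems:usedalot}(b) gives $\h_{\iota_{1s}}(f)\leq\h_{\varepsilon_s}(f)$ for every $f\in E^s$, which is the favourable direction and forces $\h_{\varepsilon_s}(k\langle Z\rangle)=\infty$ from $\h_{\iota_{1s}}(k\langle X_0,X_1\rangle)=\infty$. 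The non-isomorphism step, recovering $X_0$ and $X_1$ from $\varepsilon_s(z_0)$ and $\varepsilon_s(z_1)$ so that any $k\langle Z\rangle$-isomorphism would be a $k\langle X_0,X_1\rangle$-isomorphism, correctly reduces to Step~5 of Proposition~\ref{prop:inftyinftyembeddingsinftyinversionheight}. The only gain of the paper's formulation is generality (the cited proposition presumably packages this correspondence once and for all); your argument buys transparency at no cost in rigour.
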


\begin{proof}
Follows from \cite[Proposition~2.3]{Herberasanchez} and
Proposition~\ref{prop:inftyinftyembeddingsinftyinversionheight}.
\end{proof}


\section{Crossed products of a ring by a universal enveloping
algebra}\label{sec:crossedproducts}

Throughout this section, $k$ will denote a commutative field.

Let $L$ be a Lie $k$-algebra. We will denote by $U(L)$ its
\emph{universal enveloping algebra}.
Suppose that $R$ is a $k$-algebra, and let $\Der_k(R)$ denote the
set of $k$-linear derivations of $R$. A $k$-algebra $S$ containing
$R$ is called \emph{crossed product} of $R$ by $U(L)$ (and written
$R*U(L)$) provided that there is a $k$-linear embedding $^-\colon
L\rightarrow S$, $x\mapsto \bar{x}$, such that:
\begin{enumerate}[(i)]
\item $S$ has the additive structure of $R\otimes_k U(L)$.

\item There exist a $k$-linear  map (called \emph{action}) $\delta\colon L\rightarrow
\Der_k(R)$, $x\mapsto \delta_x$, and a $k$-bilinear antisymmetric map (called
\emph{twisting}) $t\colon L\times L\rightarrow R$, $(x,y)\mapsto
t(x,y)$ such that the following two conditions hold:
\begin{eqnarray}
\bar{x}a=a\bar{x}+\delta_x(a)\quad \textrm{for all } x\in L \textrm{
and } a\in R, \label{def:crossedproduct1}\\
\bar{x}\bar{y}-\bar{y}\bar{x}=\overline{[x,y]}+t(x,y)\quad
\textrm{for all } x,y\in L. \label{def:crossedproduct2}
\end{eqnarray}
\end{enumerate}

Crossed products for Lie Algebras were introduced in
\cite[1.7.12]{McConnellRobson} and in \cite{Chin}.

\smallskip

Let $\mathcal{C}$ be a $k$-linear independent subset of $L$. Suppose
that we have defined a total order $<$ in $\mathcal{C}$. The
\emph{standard monomials} in $\mathcal{C}$ is the subset of $R*U(L)$
consisting on the monomials of the form
${\bar{x}_1}\bar{x}_2\dotsb\bar{x}_m$ with $m\geq0$,
$x_i\in\mathcal{C}$ and $x_1\leq x_2\leq \dotsb\leq x_m$ where we
understand that the identity element in $U(L)$ is the standard
monomial corresponding to $m=0$. A standard monomial that is the
product of $m$ elements of $\mathcal{C}$ has degree $-m$.

Let $\mathcal{B}=\{x_i\mid i\in
I\}$ be a totally ordered basis of $L$. The
Poincar\'e-Birkhoff-Witt (PBW) Theorem states that the standard monomials in $\mathcal{B}$
form a $k$-basis of $U(L)$. Thus (i) above is equivalent to the fact that $R*U(L)$ is a free left
$R$-module with basis the standard monomials in $\mathcal{B}$.

One of the most important properties of crossed products is the
following result  which is \cite[Lemma~1.1]{BergenPassman}.  We will need how the identification
\eqref{lem:propertycrossedproducts} is made, thus we sketch the
proof of \cite[Lemma~1.1]{BergenPassman}.

\begin{lem}\label{lem:importantpropertycrossedproduct}
If $H$ is an ideal of $L$, then
\begin{equation}\label{lem:propertycrossedproducts}
R*U(L)=(R*U(H))*U(L/H).
\end{equation}
\end{lem}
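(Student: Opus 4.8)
The plan is to exhibit explicitly the crossed-product data realizing $S:=R*U(L)$ as a crossed product of $T:=R*U(H)$ by $U(L/H)$. First I would fix a $k$-linear complement $V$ of $H$ in $L$, so that $L=H\oplus V$ and the projection $\pi\colon L\to L/H$ restricts to a $k$-linear isomorphism $V\xrightarrow{\sim}L/H$; write $s\colon L/H\to V\subseteq L$ for its inverse. Choosing a totally ordered basis $\mathcal{B}$ of $L$ whose initial segment $\mathcal{B}_H$ is a basis of $H$ (so that $\mathcal{B}\setminus\mathcal{B}_H$ is a basis of $V$), the PBW theorem lets me factor every standard monomial in $\mathcal{B}$ as a product of a standard monomial in $\mathcal{B}_H$ and a standard monomial in $\mathcal{B}\setminus\mathcal{B}_H$. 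Since $S$ is free as a left $R$-module on the standard monomials in $\mathcal{B}$, this factorization shows that $T$ is the $R$-subalgebra freely generated over $R$ by the monomials in $\mathcal{B}_H$, and that $S$ is free as a left $T$-module on the standard monomials in $\mathcal{B}\setminus\mathcal{B}_H$. As the latter monomials correspond, via PBW for $L/H$, to a $k$-basis of $U(L/H)$, this endows $S$ with the additive structure of $T\otimes_k U(L/H)$, which is requirement (i).

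Next I would define the remaining data. For the embedding $L/H\to S$ I set $\widetilde{\xi}=\overline{s(\xi)}$; this is $k$-linear. For $\xi\in L/H$ and $u\in T$ I define the action by $\delta'_\xi(u)=[\widetilde{\xi},u]=\widetilde{\xi}u-u\widetilde{\xi}$, the commutator inside $S$; being a commutator, $\delta'_\xi$ obeys the Leibniz rule and hence is a $k$-derivation of $S$. For the twisting, given $\xi,\eta\in L/H$ I write $[s(\xi),s(\eta)]=h_0+v_0$ with $h_0\in H$ and $v_0\in V$; since $\pi([s(\xi),s(\eta)])=[\xi,\eta]$ one has $v_0=s([\xi,\eta])$, and I set $t'(\xi,\eta)=\overline{h_0}+t(s(\xi),s(\eta))$. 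Bilinearity and antisymmetry of $t'$ then follow from those of the bracket, of $s$, and of $t$.

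The crucial point, and the only place where the hypothesis that $H$ is an ideal is used, is to check that $\delta'$ takes values in $\Der_k(T)$ and $t'$ in $T$, i.e. that $T$ is closed under these operations. For $a\in R$ one has $[\widetilde{\xi},a]=\delta_{s(\xi)}(a)\in R\subseteq T$ by \eqref{def:crossedproduct1}, and for $h\in H$, relation \eqref{def:crossedproduct2} gives $[\widetilde{\xi},\bar{h}]=\overline{[s(\xi),h]}+t(s(\xi),h)$; here $[s(\xi),h]\in H$ \emph{precisely because $H$ is an ideal}, so this element lies in $\overline{H}+R\subseteq T$. Since $T$ is generated as a $k$-algebra by $R\cup\overline{H}$ and $\delta'_\xi$ is a derivation, it follows that $\delta'_\xi(T)\subseteq T$. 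The same computation with $h$ replaced by $s(\eta)$, together with the identity $\overline{[s(\xi),s(\eta)]}=\overline{h_0}+\widetilde{[\xi,\eta]}$, verifies $\widetilde{\xi}\,\widetilde{\eta}-\widetilde{\eta}\,\widetilde{\xi}=\widetilde{[\xi,\eta]}+t'(\xi,\eta)$, which is \eqref{def:crossedproduct2} for $T*U(L/H)$, and simultaneously shows $t'(\xi,\eta)\in T$. With (i) and both relations established, $S=(R*U(H))*U(L/H)$, which is \eqref{lem:propertycrossedproducts}. The main obstacle is thus structural rather than computational: ensuring that the subalgebra $T$ is stable under conjugation by the elements $\overline{s(\xi)}$, for which the ideal hypothesis on $H$ is indispensable.
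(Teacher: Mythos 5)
Your proof is correct and follows essentially the same route as the paper's (which is itself a sketch of Bergen--Passman): choose a complement $W$ of $H$ in $L$ with a section $\sigma\colon L/H\to W$, order a basis of $L$ so that the basis of $H$ comes first, invoke PBW for the additive structure, and define the action by commutation with $\overline{\sigma(\xi)}$ and the twisting by $\tilde{x}\tilde{y}-\tilde{y}\tilde{x}-\widetilde{[x,y]}$. You merely fill in the verifications that the paper leaves as "not very difficult to see", correctly isolating where the ideal hypothesis enters.
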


\begin{proof}
 Set $T=R*U(H)$. Let $W$ be
a subspace of $L$ with $L=H\oplus W$ and let $\sigma\colon
L/H\rightarrow W$ be a $k$-vector space isomorphism. Let
$\mathcal{D}$ be an ordered basis for $L/H$ and let $\mathcal{C}$ be
one for $H$. Then $\mathcal{C}\cup \{\sigma(d)\mid
d\in\mathcal{D}\}$ is an ordered basis for $L$ with the elements of
$\mathcal{C}$ coming first. Then $R*U(L)$ has the additive structure
of $T*U(L/H)$ by the PBW-theorem. Let $\tilde{\phantom{a}}$ denote
the composition of $\sigma$ followed by $^-$. Then, for each $x\in
L/H$ and $t\in T$, we have that $\zeta_x(t)=t\tilde{x}-\tilde{x}t\in
T$. Thus we get a $k$-linear map $\zeta\colon L/H\rightarrow
\Der_k(T)$, $x\mapsto \zeta_x$. Also it is not very difficult to see
that, for each $x,y\in L/H$,
$s(x,y)=\tilde{x}\tilde{y}-\tilde{y}\tilde{x}-\widetilde{[x,y]}\in
T$. We thus define the $k$-linear map $s\colon L/H\times
L/H\rightarrow T$, $(x,y)\mapsto s(x,y)$.
\end{proof}

For a given embedding of rings $R\hookrightarrow D$, we will be
interested in extending the crossed product structure of $R*U(L)$ to
$D*U(L)$ in the natural way. In order to do that we need to be
precise on the conditions that $\delta$ and $t$ must satisfy. This
is explained in the next lemma which can be seen as a corollary of
\cite[Theorem 7.1.10]{Montgomery}

\begin{lem}\label{lem:conditionsforcrossedproduct}
Let $R$ be a $k$-algebra, and let $L$  be a Lie $k$-algebra. Suppose
that there exist a $k$-linear map $\delta\colon L\rightarrow
\Der_k(R)$, $x\mapsto \delta_x$, and a $k$-bilinear antisymmetric map
$t\colon L\times L\rightarrow R$, $(x,y)\mapsto t(x,y)$. They define
a crossed product $R*U(L)$ if and only if $\delta$ and $t$ satisfy
the following relations:
\begin{enumerate}[\rm(i)]
\item
$\delta_x(t(y,z))+\delta_y(t(z,x))+\delta_z(t(x,y))+t(x,[y,z])+t(y,[z,x])+t(z,[x,y])=0.$
\item $[\delta_x,\delta_y]=\delta_{[x,y]}+\partial_{t(x,y)}$ where
$\partial_{t(x,y)}$ denotes the $k$-derivation of $R$ defined by
$a\mapsto [t(x,y),a]=t(x,y)a-at(x,y)$ for all $a\in R$.
\end{enumerate}

Moreover, $R*U(L)$ can be constructed as the $k$-coproduct of $R$ with $T(L)$, the $k$-tensor algebra over $L$, modulo the two-sided ideal $\mathcal{I}$ generated by the set
\[\{xa-ax-\delta _x(a), \quad xy-yx-[x,y]-t(x,y)\mid \mbox{ for any $x,y\in L$ and $a\in R$}\}, \]
and it is free as a right and as a left $R$-module. More precisely, if
  $\mathcal{B}=\{e_j\mid j \in
J\}$ is a fixed ordered basis for $L$,  then the set $\mathcal{G}$
of standard monomials on $\mathcal{B}$ is a basis of $R*U(L)$ as a
right and as a left $R$-module;  and if, for any $m\ge 0$,
$\mathcal{G}_m\subseteq \mathcal{G}$ denotes the set of standard
monomials of degree at most $m$, then $\sum _{x\in \mathcal{G}_m}xR
= \sum _{x\in \mathcal{G}_m}Rx$. \qed
\end{lem}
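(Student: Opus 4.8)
The plan is to prove the two implications separately and then read off the module statements, putting the real work into the converse. The forward implication is a computation valid in any associative ring. Assuming $R*U(L)$ is a crossed product, I would expand $(\bar x\bar y-\bar y\bar x)a$ for $x,y\in L$, $a\in R$ in two ways: moving $a$ to the far left through $\bar x\bar y$ and through $\bar y\bar x$ by \eqref{def:crossedproduct1}, versus first replacing $\bar x\bar y-\bar y\bar x$ by $\overline{[x,y]}+t(x,y)$ via \eqref{def:crossedproduct2} and then moving $a$ past $\overline{[x,y]}$. Since the standard monomials form an $R$-basis, the degree $-1$ and degree $0$ homogeneous components must agree separately; the degree $-1$ parts $a\,\overline{[x,y]}$ match automatically, and equating the degree $0$ parts yields exactly $[\delta_x,\delta_y]=\delta_{[x,y]}+\partial_{t(x,y)}$, i.e.\ (ii). For (i) I would use that the additive commutator satisfies the Jacobi identity in any associative ring: expanding $[[\bar x,\bar y],\bar z]$ by \eqref{def:crossedproduct1} and \eqref{def:crossedproduct2} gives $\overline{[[x,y],z]}+t([x,y],z)-\delta_z(t(x,y))$, so the cyclic sum vanishes; its degree $-1$ part is the Jacobi identity of $L$, and its degree $0$ part is, after using the antisymmetry of $t$, precisely relation (i).

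For the converse, the substantive direction, I would realize $R*U(L)$ by the presentation in the statement and prove a relative PBW theorem by Bergman's Diamond Lemma. Fix a $k$-basis $\{a_\lambda\}$ of $R$ with $a_{\lambda_0}=1$ and the ordered basis $\mathcal{B}=\{e_j\}_{j\in J}$ of $L$, and present $A:=(R\coprod_k T(L))/\mathcal{I}$ as the free $k$-algebra on the alphabet $\{a_\lambda\}\sqcup\{e_j\}$ modulo the reductions: (R0) $a_\lambda a_\mu\to$ its value in $R$, together with the unit rules for $a_{\lambda_0}$; (R1) $e_j a_\lambda\to a_\lambda e_j+\delta_{e_j}(a_\lambda)$; and (R2) $e_je_i\to e_ie_j+\overline{[e_j,e_i]}+t(e_j,e_i)$ for $j>i$. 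I would first check termination, comparing monomials first by the number of $\mathcal{B}$-letters, then by the number of inversions among the $\mathcal{B}$-letters, then by how far the $\{a_\lambda\}$-letters stand to the right; the correction terms of (R1) and (R2) are strictly smaller in this order. The irreducible words are exactly $a_\lambda\,e_{j_1}\cdots e_{j_m}$ with $j_1\le\cdots\le j_m$, which by the PBW theorem are in bijection with a $k$-basis of $R\otimes_k U(L)$.

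It then remains to verify confluence of the overlap ambiguities. The ambiguity $a_\lambda a_\mu a_\nu$ resolves by associativity of $R$; the ambiguity $e_j a_\lambda a_\mu$ resolves precisely because $\delta_{e_j}$ is a derivation; the ambiguity $e_je_i a_\lambda$ (with $j>i$) resolves, after cancelling the common terms, exactly when $[\delta_{e_j},\delta_{e_i}]=\delta_{[e_j,e_i]}+\partial_{t(e_j,e_i)}$, i.e.\ (ii) on basis elements, hence (ii) by bilinearity; and the triple Lie-overlap $e_ke_je_i$ (with $k>j>i$) resolves when its degree $-1$ part obeys the Jacobi identity of $L$ and its degree $0$ part obeys (i) on basis elements, hence (i) by multilinearity. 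I expect this last, triple-overlap check to be the main obstacle: it is the only ambiguity that genuinely mixes the two hypotheses, and one must carefully separate the $\overline{[\,\cdot\,,\,\cdot\,]}$-valued (degree $-1$) contributions from the $R$-valued (degree $0$) ones and match the latter, term by term and up to the antisymmetry of $t$, with the six summands of (i). Granting (i), (ii), the Jacobi identity, and the axioms of $R$, the Diamond Lemma yields that the irreducible words form a $k$-basis of $A$; thus $A$ is free as a left $R$-module on the standard monomials $\mathcal{G}$, relations \eqref{def:crossedproduct1} and \eqref{def:crossedproduct2} hold by construction, and $A=R*U(L)$ is a crossed product.

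Finally, for the right-module statements and the identity $\sum_{x\in\mathcal{G}_m}xR=\sum_{x\in\mathcal{G}_m}Rx$, I would argue by degree. Commuting a scalar $a\in R$ from the right to the left of a standard monomial of length $p$ by repeated use of \eqref{def:crossedproduct1} yields $a$ times that monomial plus a combination of standard monomials of length $<p$, since each $\mathcal{B}$-factor $e_j$ that is consumed is replaced by $\delta_{e_j}(a)\in R$. An induction on $p$ then gives $xR\subseteq\sum_{x'\in\mathcal{G}_m}Rx'$ for every $x\in\mathcal{G}_m$, and symmetrically $Rx\subseteq\sum_{x'\in\mathcal{G}_m}x'R$, so the two filtered pieces coincide; this proves the displayed identity and, with the left-module basis, shows that $\mathcal{G}$ is also a right $R$-module basis. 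As the statement indicates, all of this can alternatively be deduced from the Hopf-algebraic crossed-product criterion \cite[Theorem~7.1.10]{Montgomery} applied to $H=U(L)$, whose twisted-module and cocycle conditions collapse to (i) and (ii) because $L$ consists of primitive elements; I prefer the Diamond Lemma route because it delivers the basis statements directly.
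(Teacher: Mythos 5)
Your proposal is correct, but it takes a genuinely different route from the paper: the paper offers no written proof of this lemma at all --- it is stated with a \verb|\qed| and justified only by the preceding remark that it ``can be seen as a corollary of [Montgomery, Theorem~7.1.10]'' on Hopf-algebra crossed products, specialized to $H=U(L)$. Your forward direction (expanding $(\bar x\bar y-\bar y\bar x)a$ two ways for (ii), and taking the cyclic sum of $[[\bar x,\bar y],\bar z]=\overline{[[x,y],z]}+t([x,y],z)-\delta_z(t(x,y))$ for (i), separating degree $-1$ and degree $0$ components via the left $R$-basis of standard monomials) checks out exactly. Your converse via Bergman's Diamond Lemma is the right self-contained alternative, and it has the advantage of delivering precisely the extra conclusions the lemma asserts and the paper needs later --- the presentation as $(R\amalg_k T(L))/\mathcal{I}$, the left \emph{and} right $R$-basis of standard monomials, and the identity $\sum_{x\in\mathcal{G}_m}xR=\sum_{x\in\mathcal{G}_m}Rx$ by the triangularity of $xa=ax+(\text{lower degree})$ --- whereas the Hopf-algebraic route gives the cocycle/twisted-module equivalence cleanly but leaves the PBW-type basis statements to be extracted separately. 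Two places in your sketch still carry real work: the termination order needs one more component (e.g.\ the number of $\{a_\lambda\}$-letters) so that the $a_\lambda a_\mu\to a_\nu$ reductions also strictly decrease, and the triple overlap $e_ke_je_i$ is lengthy because the bracket $\overline{[e_j,e_i]}$ must be expanded in the basis and the sign conventions of (R2) tracked for indices on either side of $k$; but your identification of its degree $-1$ obstruction with the Jacobi identity of $L$ and its degree $0$ obstruction with condition (i) (using antisymmetry of $t$, and given that the smaller overlaps already resolve via (ii) and the derivation property) is the correct outcome, so neither point is a gap in the approach.
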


\begin{rem}\label{rem:extensioncrossedproduct} Let $f\colon R\hookrightarrow
D$ be an  extension of $k$-algebras, and let $L$ be a Lie
$k$-algebra such that there exists a crossed product $R*U(L)$. To
extend the crossed product structure to a crossed product $D*U(L)$
in such a way there is a ring  inclusion $\tilde{f}\colon
R*U(L)\hookrightarrow D*U(L)$ extending $f$ and such that
$\tilde{f}(\overline{x})=\overline{x}$, for any $x\in L$, one has:
\begin{enumerate}[(1)]
\item to make sure that the standard monomials are left
$D$-independent;
\item to extend the action $\delta _R$ to a $k$-linear map
$\delta_D\colon L\to \mathrm{Der}_k(D)$ in such a way that, for any
$r\in R$, $\delta _R(x)(r)=\delta _D(x)(f(r))$;
\item to make sure that
condition (ii) in  Lemma \ref{lem:conditionsforcrossedproduct} is satisfied.
\end{enumerate}
Notice that the twisting must be the same for both crossed
products, so that it is not necessary to verify condition (i)
in Lemma~\ref{lem:conditionsforcrossedproduct}.

Usually, we will be working with ring embeddings such that the
derivations over $R$ extend in a unique way to $D$ (as in Lemma
\ref{lem:extensionofderivations}), so that   condition (2)
above will be automatically satisfied. Hence, only conditions (1) and (3) above need to be verified. \qed
\end{rem}

The existence of a PBW-basis for $R*U(L)$,  asserted in  Lemma
\ref{lem:conditionsforcrossedproduct}, gives a structure of filtered
ring to $R*U(L)$ by setting, for any $m\ge 0$, $\mathcal{F}_m$ to be
the $R$-subbimodule of $R*U(L)$ generated by the monomials of
degree at most $m$.  By the definition of crossed product and Lemma
\ref{lem:conditionsforcrossedproduct}, the associated graded ring is
a polynomial ring over $R$ in the commutative variables given by the
basis of the Lie algebra $L$. For further quoting we summarize this
in the next Lemma.

\begin{lem}\label{lem:graded ring}
Let $R$ be a $k$-algebra, and let $L$ be a Lie $k$-algebra.
Suppose that there exists a crossed product $R*U(L)$. Fix $\mathcal{B}$ to be a basis of $L$,
then $\mathrm{gr} (R*U(L))\cong R[\mathcal{B}]$, that is, a polynomial algebra over $R$ in the
commuting variables $\mathcal{B}$. \qed
\end{lem}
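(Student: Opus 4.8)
The plan is to compute the associated graded ring $\mathrm{gr}(R*U(L))$ directly from the filtration $\{\mathcal{F}_m\}_{m\ge 0}$, where $\mathcal{F}_m$ is the $R$-subbimodule generated by standard monomials of degree at most $m$. By Lemma~\ref{lem:conditionsforcrossedproduct}, the set $\mathcal{G}$ of standard monomials on the fixed basis $\mathcal{B}$ is simultaneously a left and right $R$-basis of $R*U(L)$, and moreover $\sum_{x\in\mathcal{G}_m}xR=\sum_{x\in\mathcal{G}_m}Rx=\mathcal{F}_m$; this two-sided freeness is precisely what makes each graded piece $\mathcal{F}_m/\mathcal{F}_{m-1}$ a well-behaved $R$-bimodule, freely generated (on either side) by the images of the standard monomials of degree exactly $m$.

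First I would verify that the filtration is exhaustive and compatible with multiplication, i.e. $\mathcal{F}_p\cdot\mathcal{F}_q\subseteq\mathcal{F}_{p+q}$, so that $\mathrm{gr}(R*U(L))=\bigoplus_{m\ge 0}\mathcal{F}_m/\mathcal{F}_{m-1}$ is genuinely a graded ring. This follows because a product of a standard monomial of degree $p$ with one of degree $q$, after rewriting into standard form using the relations \eqref{def:crossedproduct1} and \eqref{def:crossedproduct2}, yields a standard monomial of degree $p+q$ plus terms of strictly lower degree. The key point is that both defining relations are \emph{degree-lowering} in their correction terms: in $\bar{x}a=a\bar{x}+\delta_x(a)$ the term $\delta_x(a)\in R$ has degree $0$ (one lower than $\bar{x}a$), and in $\bar{x}\bar{y}-\bar{y}\bar{x}=\overline{[x,y]}+t(x,y)$ the right-hand side has degree $-1$, again strictly lower than the degree $-2$ of $\bar{x}\bar{y}$.

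Next I would show that in $\mathrm{gr}(R*U(L))$ the basis elements of $L$ become central and commute with one another. Commutativity of the $\overline{e_j}$ among themselves is immediate from \eqref{def:crossedproduct2}: modulo $\mathcal{F}_{-2}$ (lower filtration), the commutator $\bar{x}\bar{y}-\bar{y}\bar{x}$ vanishes in the graded piece because $\overline{[x,y]}+t(x,y)$ lands in strictly lower degree. Centrality over $R$ follows identically from \eqref{def:crossedproduct1}, since $\delta_x(a)$ lies in lower degree. Thus the images of $\mathcal{B}$ in $\mathrm{gr}(R*U(L))$ generate a commutative $R$-subalgebra with $R$ central, and the PBW-freeness guarantees there are no further relations—the standard monomials of each degree remain $R$-independent in the graded object.

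The main obstacle, and the step requiring the most care, is establishing that the natural surjection $R[\mathcal{B}]\twoheadrightarrow\mathrm{gr}(R*U(L))$, sending a commutative variable $e_j$ to the class of $\overline{e_j}$, is actually an isomorphism rather than merely a surjection. Surjectivity is clear from the preceding steps. For injectivity I would use the PBW-basis statement of Lemma~\ref{lem:conditionsforcrossedproduct}: a monomial in $R[\mathcal{B}]$ of total degree $m$ maps to the class of the corresponding standard monomial, and since the standard monomials of degree exactly $m$ form an $R$-basis of $\mathcal{F}_m/\mathcal{F}_{m-1}$, distinct commutative monomials map to $R$-linearly independent classes. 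Hence the kernel is trivial and $\mathrm{gr}(R*U(L))\cong R[\mathcal{B}]$ as claimed. I would take care to phrase the argument so that it is valid whether one views everything as left $R$-modules or right $R$-modules, invoking the symmetric conclusion $\sum_{x\in\mathcal{G}_m}xR=\sum_{x\in\mathcal{G}_m}Rx$ to legitimize treating $R$ as central in the graded ring.
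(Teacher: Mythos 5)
Your argument is correct and is essentially the same as the paper's: the paper states this lemma with no written proof beyond the preceding paragraph, which sketches exactly your reasoning (filter by number of standard-monomial factors, observe that the correction terms in \eqref{def:crossedproduct1} and \eqref{def:crossedproduct2} drop to strictly lower filtration degree, and use the two-sided PBW-freeness from Lemma~\ref{lem:conditionsforcrossedproduct} to see that the graded pieces are free on the standard monomials of exact degree $m$). Your write-up just supplies the details the paper leaves implicit; the only cosmetic point is that the paper's degree convention assigns degree $-m$ to a product of $m$ basis elements, so ``lower degree'' should consistently mean ``fewer factors'' throughout.
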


In the foregoing lemma, if $R$ is a field, then  $\mathrm{gr} (R*U(L))$ is an Ore domain, which implies
that $R*U(L)$ embeds in a field with some good properties.  This is  expressed more procisely in the next
proposition.

\begin{prop}\label{prop:canonicalfieldoffractions}
Let $L$ be a Lie $k$-algebra and $K$ be a field with $k$ as a central subfield.
For each crossed product $K*U(L)$, there is a canonically
constructed field of fractions $$K*U(L)\hookrightarrow
\mathfrak{D}(K*U(L)).$$ Suppose that $N$ is a subalgebra of $L$.
The following properties are satisfied:
\begin{enumerate}[\rm (i)]
\item The following diagram is
commutative $$\xymatrix{ K*U(N)\ar@{^{(}->}[r] \ar@{^{(}->}[d] &
\mathfrak{D}(K*U(N))\ar@{^{(}->}[d]\\
K*U(L) \ar@{^{(}->}[r] & \mathfrak{D}(K*U(L))}$$
\item If $\mathcal{B}_N$ is a basis of $N$ and $\mathcal{C}$
is a set of elements of $L\setminus N$ such that $\mathcal{B}_N\cup \mathcal{C}$  is
a basis of $L$, then the
standard monomials in $\mathcal{C}$
 are linearly independent over
$\mathfrak{D}(K*U(N))$.
\item If  $N$ is an ideal of $L$, then the subring of $\mathfrak{D}(K*U(L))$ generated
by $K*U(L)$ and $\mathfrak{D}(K*U(N))$ is a crossed product $\mathfrak{D}(K*U(N))*U(L/N)$ extending
 $(K*U(N))*U(L/N)$ in the natural way.
\end{enumerate}
\end{prop}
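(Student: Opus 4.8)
The plan is to construct $\mathfrak{D}(K*U(L))$ as an Ore field of fractions and then derive (i)--(iii) from the universal property of Ore localization, the PBW-basis, and the uniqueness of extensions of derivations. For the construction, fix a basis $\mathcal{B}$ of $L$; by Lemma~\ref{lem:graded ring} the associated graded ring of the PBW-filtration is $\mathrm{gr}(K*U(L))\cong K[\mathcal{B}]$, a polynomial ring over the field $K$ in the central commuting variables $\mathcal{B}$, which is a (left and right) Ore domain. As recalled just before the statement, it follows that $K*U(L)$ is itself an Ore domain, and I let $\mathfrak{D}(K*U(L))$ be its Ore field of fractions; since the Ore field of fractions is unique up to a unique isomorphism, this makes the construction canonical. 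The same applies to $K*U(N)$. For (i), the sub-crossed-product $K*U(N)\subseteq K*U(L)$ is an Ore domain all of whose nonzero elements are invertible in the field $\mathfrak{D}(K*U(L))$; by the universal property of Ore localization the inclusion $K*U(N)\hookrightarrow\mathfrak{D}(K*U(L))$ factors uniquely through $\mathfrak{D}(K*U(N))$, giving a field homomorphism $\mathfrak{D}(K*U(N))\to\mathfrak{D}(K*U(L))$, which is automatically injective and makes the square commute.

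For (ii), I would order $\mathcal{B}_N\cup\mathcal{C}$ so that the elements of $\mathcal{B}_N$ come first. Then every standard monomial in $\mathcal{B}_N\cup\mathcal{C}$ is a standard monomial in $\mathcal{B}_N$ times a standard monomial in $\mathcal{C}$, so by the PBW-basis of Lemma~\ref{lem:conditionsforcrossedproduct} the ring $K*U(L)$ is a free left $K*U(N)$-module on the standard monomials in $\mathcal{C}$. If $\sum_i q_i w_i=0$ in $\mathfrak{D}(K*U(L))$ with $q_i\in\mathfrak{D}(K*U(N))$ and $w_i$ distinct standard monomials in $\mathcal{C}$, I choose a common left denominator $s\in K*U(N)\setminus\{0\}$ and write $q_i=s^{-1}r_i$ with $r_i\in K*U(N)$ (possible since $K*U(N)$ is left Ore); multiplying on the left by $s$ gives $\sum_i r_i w_i=0$ in $K*U(L)$, whence all $r_i=0$ by freeness, and so all $q_i=0$.

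For (iii), since $N$ is an ideal, Lemma~\ref{lem:importantpropertycrossedproduct} gives $K*U(L)=(K*U(N))*U(L/N)$, and I would extend its base ring from $K*U(N)$ to $\mathfrak{D}(K*U(N))$ via Remark~\ref{rem:extensioncrossedproduct}. Condition (1) there is precisely (ii); condition (2) holds because each action derivation $\delta_c$ of $K*U(N)$ extends uniquely to a derivation $\delta_c^{D}$ of the Ore field $\mathfrak{D}(K*U(N))$ (uniqueness by Lemma~\ref{lem:uniqueextensionderivation}); and condition (3) holds because both sides of Lemma~\ref{lem:conditionsforcrossedproduct}(ii) are derivations of $\mathfrak{D}(K*U(N))$ agreeing on $K*U(N)$, hence equal by uniqueness. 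This produces an abstract crossed product $\mathfrak{D}(K*U(N))*U(L/N)$ extending $(K*U(N))*U(L/N)$. To realize it as the subring $B$ of $\mathfrak{D}(K*U(L))$ generated by $K*U(L)$ and $\mathfrak{D}(K*U(N))$, I would check that the inner derivation $a\mapsto\bar{c}a-a\bar{c}$ of $\mathfrak{D}(K*U(L))$ maps $\mathfrak{D}(K*U(N))$ into itself: for $q=s^{-1}r$ with $s,r\in K*U(N)$ the formula of Lemma~\ref{lem:uniqueextensionderivation} gives $\bar{c}q-q\bar{c}=-s^{-1}\delta_c(s)s^{-1}r+s^{-1}\delta_c(r)\in\mathfrak{D}(K*U(N))$, and this restriction, being a derivation extending $\delta_c$, equals $\delta_c^{D}$ by uniqueness. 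Hence the $\mathfrak{D}(K*U(N))$-linear map sending the standard monomials in $\mathcal{C}$ to their images in $\mathfrak{D}(K*U(L))$ respects both the action and the twisting relations, so it is a ring homomorphism from the abstract crossed product onto $B$; it is injective by (ii), and surjective because the commutation relations show $B=\sum_w\mathfrak{D}(K*U(N))\,w$ over the standard monomials $w$ in $\mathcal{C}$.

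The main obstacle is this last identification in (iii): one must verify that conjugation by $\bar{c}$ inside the large field $\mathfrak{D}(K*U(L))$ genuinely implements, on the subfield $\mathfrak{D}(K*U(N))$, the unique extension $\delta_c^{D}$ of the action --- this is what lets the abstractly extended crossed product be recognized as a concrete subring. The remaining verifications are routine applications of the universal property of Ore localization and of the PBW-basis.
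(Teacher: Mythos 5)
Your proof collapses at the very first step. From Lemma~\ref{lem:graded ring} you get that $\gr(K*U(L))\cong K[\mathcal{B}]$ is a (commutative, hence Ore) domain, and you then assert that ``it follows that $K*U(L)$ is itself an Ore domain.'' That implication is false, and the standard counterexample is precisely the central case of this proposition: for $H$ the free Lie algebra on a set $X$ with $|X|\ge 2$, the crossed product $k*U(H)$ is the free algebra $\freealgebra kX$ (Lemma~\ref{lem:crossedproductfir}), whose associated graded ring for the PBW-filtration is a polynomial ring, yet $\freealgebra kX$ is not an Ore domain and has no Ore field of fractions. The object $\mathfrak{D}(K*U(L))$ is not an Ore localization; its existence is the content of Cohn's embedding theorem for filtered rings with Ore associated graded ring (\cite[Theorem~2.6.5]{Cohnskew}) or of Lichtman's valuation-theoretic construction, which is what the paper invokes. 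The word ``canonical'' refers to that construction, not to a universal property of Ore localization.

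Because of this, every subsequent step that writes an element of $\mathfrak{D}(K*U(N))$ as a left fraction $s^{-1}r$ with $s,r\in K*U(N)$ is unjustified: $N$ may itself be a free Lie algebra of rank $\ge 2$, so $K*U(N)$ need not be Ore either. This breaks your proof of (i) (no universal property of Ore localization to factor through), your linear-independence argument in (ii) (no common left denominator), and the computation in (iii) showing that the inner derivation by $\bar c$ maps $\mathfrak{D}(K*U(N))$ into itself. The parts of your argument that survive are the PBW bookkeeping (ordering $\mathcal{B}_N\cup\mathcal{C}$ with $\mathcal{B}_N$ first, so that $K*U(L)$ is free over $K*U(N)$ on the standard monomials in $\mathcal{C}$) and the use of Lemmas~\ref{lem:uniqueextensionderivation} and \ref{lem:conditionsforcrossedproduct} to extend the action and verify the crossed-product relations over a larger field once one knows derivations extend there; but the heart of the matter --- compatibility of the Cohn--Lichtman embeddings for $N\subseteq L$ and the linear independence of the monomials in $\mathcal{C}$ over $\mathfrak{D}(K*U(N))$ --- requires working with the actual filtration/valuation construction, which is why the paper defers (i) and (ii) to \cite[Proposition~2]{Lichtmanuniversalfields} and (iii) to \cite[Section~2.3]{Lichtmanuniversalfields} rather than arguing via fractions.
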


\begin{proof}
By Lemma~\ref{lem:graded ring}, $\gr(K*U(L))$ is an Ore domain. Now
\cite[Theorem~2.6.5]{Cohnskew} or \cite{Lichtmanvaluationmethods}
imply the existence of the construction $K*U(L)\hookrightarrow
\mathfrak{D}(K*U(L))$.

Conditions (i) and (ii) can be proved in exactly the same way as for the embedding  $U(L)\hookrightarrow \mathfrak{D}(U(L))$,
see  \cite[Proposition~2]{Lichtmanuniversalfields}. Condition (iii) follows as in \cite[Section~2.3]{Lichtmanuniversalfields}.
\end{proof}

Now we turn our attention to crossed products where the underlying Lie algebra is free.

\begin{lem}\label{lem:crossedproductfir}
Let $R$ be a $k$-algebra. Let $H$
be the free Lie algebra on a set $X$. If $R*U(H)$ is a crossed
product then, for each $x\in X$, there exists a $k$-derivation
$\partial _x\colon R\rightarrow R$ such that $R*U(H)\cong
\coprod_{x\in X} R[x;\partial_x]$ the ring coproduct over $R$.

In particular, if $R=K$ is a field then $K*U(H)$ is a fir.
\end{lem}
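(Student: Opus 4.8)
The plan is to produce an explicit $R$-ring isomorphism between the coproduct and the crossed product, and then invoke Cohn's coproduct theorem for the fir statement. For each $x\in X\subseteq H$ set $\partial_x:=\delta_x\in\Der_k(R)$, where $\delta$ is the action of the crossed product; since $\delta$ is $k$-linear this is the required $k$-derivation. By \eqref{def:crossedproduct1} the relation $\bar x a=a\bar x+\partial_x(a)$ holds in $R*U(H)$ for all $a\in R$, so the assignment $x\mapsto\bar x$ defines an $R$-ring homomorphism $R[x;\partial_x]\to R*U(H)$ for each $x$, and by the universal property of the coproduct these assemble into an $R$-ring homomorphism
$$\Phi\colon \coprod_{x\in X}R[x;\partial_x]\longrightarrow R*U(H).$$
I would first check surjectivity: rewriting \eqref{def:crossedproduct2} as $\overline{[x,y]}=\bar x\bar y-\bar y\bar x-t(x,y)$ and iterating the bracket shows that $\bar h$ lies in the image of $\Phi$ for every $h$ in the $k$-span of iterated brackets of $X$, that is, for every $h\in H$. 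Since the standard monomials in a basis of $H$ generate $R*U(H)$ as a left $R$-module (Lemma~\ref{lem:conditionsforcrossedproduct}), $\Phi$ is onto.

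The heart of the argument is injectivity, and here the choice of filtration is decisive. I would filter $R*U(H)$ by total $X$-degree: let $\mathcal{G}_m$ be the left $R$-span of those standard monomials $\bar e_{i_1}\cdots\bar e_{i_p}$ whose basis factors satisfy $\sum_j\deg_X(e_{i_j})\le m$. Under this filtration the term $\delta_x(a)$ in \eqref{def:crossedproduct1} and the term $t(x,y)$ in \eqref{def:crossedproduct2} are of strictly lower degree, so in $\gr(R*U(H))$ the subring $R=\mathcal{G}_0$ becomes central while the symbols of the $\bar x$ still satisfy the enveloping-algebra relations of $H$; comparing left $R$-bases gives $\gr(R*U(H))\cong R\otimes_k U(H)$, the untwisted crossed product. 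I would \emph{emphasize} that the coarser PBW filtration of Lemma~\ref{lem:graded ring}, whose associated graded is the commutative polynomial ring $R[\mathcal{B}]$, is useless here: it collapses $\bar x\bar y$ and $\bar y\bar x$ to the same symbol, so the induced graded map is not injective. Filtering by $X$-degree is exactly what retains the word structure, and this recognition is the main obstacle of the proof, not any single computation.

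With this $\gr$ in hand the symbol of a monomial $\bar x_{i_1}\cdots\bar x_{i_m}$ is $1\otimes x_{i_1}\cdots x_{i_m}\in R\otimes_k U(H)$, and as the word $w=x_{i_1}\cdots x_{i_m}$ ranges over the free monoid $X^\ast$ these range over $1\otimes(\text{basis of }U(H)=\freealgebra kX)$, a free left $R$-basis of $\gr(R*U(H))$. By the standard lifting of a homogeneous basis from the associated graded to a filtered ring, the elements $\{\bar x_{i_1}\cdots\bar x_{i_m}\mid w\in X^\ast\}$ form a free left $R$-basis of $R*U(H)$. Since $X^\ast$ is a free left $R$-basis of $\coprod_{x\in X}R[x;\partial_x]$ (coproduct normal form) and $\Phi$ carries $w\mapsto\bar x_{i_1}\cdots\bar x_{i_m}$, the map $\Phi$ sends a basis bijectively onto a basis; being already a ring homomorphism, it is therefore an isomorphism.

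For the last sentence, when $R=K$ is a field each factor $K[x;\partial_x]$ is a skew polynomial ring over a field, hence a principal (left and right) ideal domain and in particular a fir, and it is free as a left and right $K$-module. Cohn's coproduct theorem \cite{Cohnfreeeidealringslocalization} states that the ring coproduct of a family of firs amalgamating a common subfield is again a fir, so $K*U(H)\cong\coprod_{x\in X}K[x;\partial_x]$ is a fir.
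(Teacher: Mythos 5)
Your proof is correct, and for the decisive step it takes a genuinely different route from the paper's. Both arguments set $\partial_x=\delta_x$, assemble the $R$-ring map $\Phi\colon\coprod_{x\in X}R[x;\partial_x]\to R*U(H)$ from the universal property of the coproduct, and finish by matching the word basis $X^\ast$ of the coproduct (Bergman's normal form, which the paper cites explicitly and you invoke as known) against a word basis of $R*U(H)$; the whole content is therefore in showing that the products $\bar x_{i_1}\dotsb\bar x_{i_m}$ form a free left $R$-basis of $R*U(H)$. The paper gets this by exploiting the freeness of $H$ directly: the unique Lie algebra homomorphism $H\to R*U(H)$ with $x\mapsto\bar x$ satisfies $\tilde z=\bar z+b_z$ with $b_z\in R$ (induction on the length of Lie words), which exhibits $R*U(H)$ as a crossed product with \emph{trivial twisting} and lets one transport the word basis of $U(H)=\freealgebra kX$ through a triangular change of basis. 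You instead kill the twisting by passing to the associated graded ring of the $X$-degree filtration, identify $\gr(R*U(H))$ with $R\otimes_k U(H)$, and lift the word basis; your observation that the coarser length filtration of Lemma~\ref{lem:graded ring} is unusable because it identifies the symbols of $\bar x\bar y$ and $\bar y\bar x$ is exactly right, and it is the same obstruction that the paper's untwisting map is designed to circumvent. The two devices are close cousins — each replaces $R*U(H)$ by an untwisted model in which $k$-bases of $U(H)$ become left $R$-bases — but yours is the more mechanical filtered/graded argument, while the paper's yields the additional structural fact that the twisting of $R*U(H)$ is trivializable via the derivations $\partial_z=\delta_z+[b_z,\,\cdot\,]$ for arbitrary $z\in H$. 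Two small points to tighten, neither of which affects the argument: you must fix a \emph{homogeneous} basis of $H$ (e.g.\ a Hall basis) so that $\deg_X$ is defined on standard monomials and so that $\overline{[x,y]}$ contributes in degree $\deg_X x+\deg_X y$ when you check that the $\mathcal{G}_m$ are multiplicative; and the phrase ``$R=\mathcal{G}_0$ becomes central'' should say that the symbols of the $\bar x$ centralize $R$ in the graded ring — $R$ itself need not be commutative. The fir conclusion is handled identically in both proofs via Cohn's coproduct theorem.
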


\begin{proof}
Consider the Lie $k$-algebra structure of $R*U(H)$ where the Lie
product is given by $[a,b]=ab-ba$ for all $a,b\in R*U(H)$. Consider
the morphism of Lie $k$-algebras $\tilde{\phantom{s}}\colon
H\rightarrow K*U(H)$ which sends each $x\in X$ to $\bar{x}$. Thus
$\tilde{z}\tilde{w}-\tilde{w}\tilde{z}=\widetilde{[z,w]}$ for all
$z,w\in H$.

By induction on the length of the Lie words on $X$ and then
extending by linearity to $H$, it is not difficult to see that for
each $z\in H$,
\begin{equation}\label{eq:crossedproductcoproduct}
\tilde{z}=\bar{z}+b_z \textrm{ for some } b_z\in R.
\end{equation}
It is known that $U(H)$ is $\freealgebra kX$, the free $k$-algebra
on the set $X$. Thus
 $R*U(H)$ is a free
left $R$-module with basis the free monoid on the set $\{\bar{x}\mid
x\in X\}$. By \eqref{eq:crossedproductcoproduct}, it follows that
$R*U(H)$ is a free left $R$-module with basis the free monoid on the
set $\{\tilde{x}\mid x\in X\}$. Thus $R*U(H)$ has the same additive
structure as $R\otimes_k k\langle X\rangle=R\otimes_k U(H)$.

Also from \eqref{eq:crossedproductcoproduct}, it follows that
\begin{equation}\label{eq:crossedproductcoproduct2}
\tilde{z}a=a\tilde{z}+\partial_{z}(a)\quad \textrm{for each } a\in R
\textrm{ and } z\in H,
\end{equation}
where $\partial_{z}\in \Der_k(R)$ and is given by $a\mapsto
\delta_z(a)+[b_z,a]$. Thus we have just proved that $R*U(H)$ can be
thought as a crossed product with trivial twisting.

From \eqref{eq:crossedproductcoproduct2}, we deduce that, for each
$x\in X$, there exists a morphism of $R$-rings $$\varphi_x\colon
R[x;\partial_x]\rightarrow R*U(H)$$ which sends
$x\mapsto \tilde{x}$. Consider now the unique morphism of $R$-rings
$$\varphi\colon \coprod_{x\in X}R[x;\partial_x]\rightarrow R*U(H)$$
extending all $\varphi_x$. Proceeding as in \cite[Section
4]{bergmancoproducts} it is possible to prove that  the free monoid
on $X$ is a right and left $R$-basis of $\coprod_{x\in
X}R[x;\partial_x]$. Thus $\varphi$ is an isomorphism.

The statement when $R$ is a field follows from \cite[\S
6]{Cohnweakalgorithm}.
\end{proof}

Hence for a free Lie algebra $H$ and crossed product $K*U(H)$,
Lemma~\ref{lem:crossedproductfir} implies the existence of the
universal field of fractions of $K*U(H)$ and
Proposition~\ref{prop:canonicalfieldoffractions} the existence of
$K*U(H)\hookrightarrow\mathfrak{D}(K*U(H))$. We will show in the
next section that both fields of fractions are in fact the same.
Next two results will be useful in proving this
assertion, for their proof it is important to keep in mind the results quoted in the
section~\ref{localitzacio}.

 The statement of the next lemma is a slight
generalization of \cite[Lemma~1]{LewinLewin} while the proof remains
essentially the same. In Proposition
\ref{prop:generalizationProp3.1Lichtman}, it will be helpful in
recognizing isomorphic fields of fractions.

\begin{lem}\label{lem:subringsofspecializations} Let $R$ be a ring.
Let $F$ and $L$ be epic $R$-fields, and $\rho$ an $R$-specialization
from $F$ to $L$. Suppose that $S$ is a subring of $F$ contained in
the domain of $\rho$. Denote by $F_S$ and $L_{\rho(S)}$ the
subfields of $F$ and $L$ generated by $S$ and $\rho(S)$,
respectively, and consider their induced structure of $S$-fields. If
$L_{\rho (S)}$ is an $S$-field with a minimal prime matrix ideal,
then $F_S$ is an $S$-field of fractions contained in the domain of
$\rho$, and so $\rho$ maps $F_S$ isomorphically onto $L_{\rho (S)}.$
\end{lem}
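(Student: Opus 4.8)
The plan is to translate the statement into a comparison of invertibility of matrices over $S$, and then to play the ``easy'' inclusion of prime matrix ideals, which comes from the local structure of the domain of $\rho$, against the minimality hypothesis.

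First I would record the setup. Since $S\subseteq F$ and $F$ is a field, the inclusion $S\hookrightarrow F_S$ is automatically a field of fractions, and both $F_S$ and $L_{\rho(S)}$ are epic $S$-fields; write $\mathcal{P}_{F_S}$ and $\mathcal{P}_{L_{\rho(S)}}$ for their associated prime matrix ideals over $S$. Let $F_0$ denote the domain of $\rho$, a local subring of $F$ whose residue map modulo its maximal ideal is $\rho\colon F_0\twoheadrightarrow L$; by hypothesis $S\subseteq F_0$. Because $F_S$ is a subfield of $F$, a matrix over $S$ is invertible over $F_S$ if and only if it is invertible over $F$, so $\mathcal{P}_{F_S}$ consists exactly of the matrices $A$ over $S$ that are not invertible over $F$, while $\mathcal{P}_{L_{\rho(S)}}$ consists of those $A$ with $\rho(A)$ not invertible over $L$.

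Next I would establish $\mathcal{P}_{F_S}\subseteq\mathcal{P}_{L_{\rho(S)}}$ using only that $F_0$ is local. For $A\in M_n(S)\subseteq M_n(F_0)$, the standard fact that a square matrix over a local ring is invertible if and only if its image over the residue field is invertible gives: $A$ is invertible over $F_0$ precisely when $\rho(A)$ is invertible over $L$. If $A$ is not invertible over $F$ it is not invertible over $F_0$, hence $\rho(A)$ is not invertible over $L$; this is exactly $\mathcal{P}_{F_S}\subseteq\mathcal{P}_{L_{\rho(S)}}$ (equivalently, it produces the $S$-specialization $F_S\to L_{\rho(S)}$). The hypothesis that $\mathcal{P}_{L_{\rho(S)}}$ is a minimal prime matrix ideal, combined with the correspondence between epic $S$-fields and prime matrix ideals stated above, then forces $\mathcal{P}_{F_S}=\mathcal{P}_{L_{\rho(S)}}$. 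With this equality in hand I would run the converse implication explicitly: given $A\in M_n(S)$ invertible over $F$, i.e. $A\notin\mathcal{P}_{F_S}=\mathcal{P}_{L_{\rho(S)}}$, the matrix $\rho(A)$ is invertible over $L$, so by the local-ring fact $A$ is already invertible over $F_0$, whence $A^{-1}\in M_n(F_0)$. Since $F_S$ is the rational closure of $S$ in $F$, every element of $F_S$ is an entry of such an inverse $A^{-1}$ (cf. \cite[Theorem~7.12]{Cohnfreeeidealringslocalization}), and therefore lies in $F_0$; this proves $F_S\subseteq F_0$. Finally $\rho|_{F_S}$ is a ring homomorphism out of a field, hence injective; its image is a field containing $\rho(S)$, so it contains $L_{\rho(S)}$, while $\rho(A^{-1})=\rho(A)^{-1}$ has all entries in $L_{\rho(S)}$, so the image is contained in $L_{\rho(S)}$. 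Hence $\rho$ restricts to an isomorphism $F_S\xrightarrow{\ \sim\ }L_{\rho(S)}$.

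The main obstacle is precisely the converse invertibility implication in the last step: for a general specialization a matrix over $S$ may become invertible over $F$ without its $\rho$-image being invertible over $L$, since the local-ring argument only yields the direction ``$\rho(A)$ invertible $\Rightarrow A$ invertible''. This is the single place where minimality of $\mathcal{P}_{L_{\rho(S)}}$ is indispensable: it upgrades the automatic inclusion $\mathcal{P}_{F_S}\subseteq\mathcal{P}_{L_{\rho(S)}}$ to an equality, which is exactly what traps the entries of each $A^{-1}$ inside the domain $F_0$ and lets $\rho$ itself realize the isomorphism rather than merely an abstract one.
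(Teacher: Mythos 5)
Your proof is correct and follows essentially the same strategy as the paper's: both rest on the local-ring fact that invertibility of a matrix over the domain $F_0$ of $\rho$ is detected by invertibility of its image over $L$, use the minimality hypothesis to force the two prime matrix ideals over $S$ to coincide, and then invoke the description of $F_S$ as the rational closure of $S$ in $F$ to trap $F_S$ inside $F_0$. The only difference is presentational: the paper constructs the universal localization $S_\Sigma$ at the matrices inverted over $L_{\rho(S)}$ and compares the two induced $S$-ring maps $f$ and $g=\rho\circ f$ to exhibit the specialization $F_S\to L_{\rho(S)}$ explicitly, whereas you phrase the same comparison directly as the inclusion $\mathcal{P}_{F_S}\subseteq\mathcal{P}_{L_{\rho(S)}}$ and appeal to the correspondence theorem quoted in \S\ref{localitzacio}.
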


\begin{proof}
Let $F_0$ be the domain of $\rho$. Then $F$ and $L$ are
$F_0$-fields, via the inclusion $F_0\hookrightarrow F$ and via
$\rho\colon F_0\rightarrow L$, hence $\rho$ is an
$F_0$-specialization. Let $\Sigma$ be the set of matrices over $S$
that become invertible over $L_{\rho(S)}$. Recall that  there exists
a unique $S$-ring homomorphism $g\colon S_\Sigma\to L_{\rho(S)}$,
and this morphism happens to be onto (cf.
section~\ref{localitzacio}).

Each matrix of $\Sigma$ is invertible over $F_0$ because it is
invertible over its residue class field $F_0/\ker\rho\cong L$, thus
it is also invertible over $F$. The matrices of $\Sigma$ are also
invertible over $F_S$ because when considered as endomorphisms of
finite dimensional vector spaces over $F_S$ they are injective. By
the universal property of the localization, there exists a unique
morphism of $S$-rings $f\colon S_\Sigma \to F$ such
$f(S_\Sigma)\subseteq F_0\cap F_S$. Therefore we may consider the
morphism of $S$ rings $\rho \circ f\colon S_\Sigma \to L_{\rho(S)}$;
the uniqueness of $g$ implies that $g=\rho \circ f$. Since $g$ is
onto, we can deduce that $f$ induces  an onto $S$-morphism from a
subring of $F_S$ to $L_{\rho(S)}$. Therefore such $S$-morphism is a
specialization from
 $F_S$ to $L_{\rho(S)}$ and, by the
minimality of the prime matrix ideal of $L_{\rho(S)}$, it  must be an isomorphism between $F_S$ and $L_{\rho(S)}$.  Therefore, the image of $f$ is
exactly $F_S$, i.e. $F_S$ is contained in $F_0$.
\end{proof}

The following proposition is a generalization of
\cite[Lemma~3.1]{Lichtmanuniversalfields} to crossed products.

\begin{prop}\label{prop:generalizationProp3.1Lichtman}
Let $K$ be a field with $k$ as a central subfield. Let $H$ be a Lie
$k$-algebra and let $N$ be an ideal of $H$. Consider a crossed
product $K*U(H)$. Suppose that the following two conditions are
satisfied:
\begin{enumerate}[\rm (1)]
\item $K*U(H)$ has a universal field of fractions $K*U(H)\hookrightarrow E$.
\item $R=K*U(N)$ has a  prime matrix ideal $\mathcal{P}$ whose
localization $R_{\mathcal P}$ is a field of fractions of $R$.
\end{enumerate}
Then $K*U(H)=R*U(H/N)$ can be extended to a crossed product
structure $R_\mathcal{P}*U(H/N)$, the embedding
$K*U(H)\hookrightarrow E$ can be extended to
$R_\mathcal{P}*U(H/N)\hookrightarrow E$ and this embedding is the
universal field of fractions of $R_\mathcal{P}*U(H/N)$.
\end{prop}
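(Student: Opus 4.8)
The plan is to work entirely inside the fixed universal field of fractions $\iota\colon K*U(H)\hookrightarrow E$ and to establish the three assertions in turn: the extension of the crossed product, the extension of the embedding, and universality. Throughout I use the identification $K*U(H)=R*U(H/N)$ of Lemma~\ref{lem:importantpropertycrossedproduct}, with its action $\zeta\colon H/N\to\Der_k(R)$ and twisting $s$, and I fix a basis $\mathcal{B}_N$ of $N$ together with a complementary set $\mathcal{C}$ so that $\mathcal{B}_N\cup\mathcal{C}$ is a basis of $H$; thus the standard monomials in $\mathcal{C}$ form a left $R$-basis of $K*U(H)$.

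First I would build the crossed product $R_\mathcal{P}*U(H/N)$ by running through the checklist of Remark~\ref{rem:extensioncrossedproduct} for the extension $R\hookrightarrow R_\mathcal{P}$. The action is the delicate datum: since $R_\mathcal{P}$ is the localization of $R$ at the upper multiplicative set of square matrices over $R$ invertible in $R_\mathcal{P}$ (the complement of $\mathcal{P}$), Lemma~\ref{lem:extensionofderivations} extends each $\zeta_x$ uniquely to a $k$-derivation of $R_\mathcal{P}$, and uniqueness keeps $x\mapsto\zeta_x$ a $k$-linear map $H/N\to\Der_k(R_\mathcal{P})$. This settles points (2) and (3) of the remark: the twisting $s$ is unchanged, so condition (i) of Lemma~\ref{lem:conditionsforcrossedproduct} is inherited verbatim, while in condition (ii) both sides are derivations of $R_\mathcal{P}$ restricting to equal derivations on $R$, hence equal by the uniqueness clause of Lemma~\ref{lem:extensionofderivations}. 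Point (1), the left $R_\mathcal{P}$-independence of the standard monomials in $\mathcal{C}$, I would read off inside $E$ once $R_\mathcal{P}$ is identified with the subfield of $E$ generated by $R$ (next paragraph), exactly as in Proposition~\ref{prop:canonicalfieldoffractions}(ii).

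The heart of the argument is to realize $R_\mathcal{P}$ inside $E$. The subfield $E_R$ of $E$ rationally generated by $R$ is a field of fractions of $R$; I claim its associated prime matrix ideal is $\mathcal{P}$, so that $R_\mathcal{P}\cong E_R$ as $R$-fields. Granting this, the matrices inverted in $R_\mathcal{P}$ are inverted in $E$, so $R\hookrightarrow E$ extends to $R_\mathcal{P}\hookrightarrow E$, and together with $\overline{x}\mapsto\overline{x}$ on the $U(H/N)$-part this yields an embedding $R_\mathcal{P}*U(H/N)\hookrightarrow E$ extending $\iota$; it is a field of fractions because its image contains $K*U(H)$, which rationally generates $E$. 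For universality, let $A=R_\mathcal{P}*U(H/N)$ and let $A\hookrightarrow E'$ be any epic $A$-field. Since $R_\mathcal{P}$ is rationally generated by $R$ and the standard monomials in $\mathcal{C}$ lie in $K*U(H)$, the restriction $K*U(H)\hookrightarrow E'$ is again a field of fractions of $K*U(H)$; by universality of $E$ there is a specialization $\rho\colon E\to E'$. Applying Lemma~\ref{lem:subringsofspecializations} with $S=R$ then shows that $\rho$ is defined on all of $E_R\cong R_\mathcal{P}$ and maps it isomorphically onto the subfield of $E'$ generated by $R$; hence $\rho$ is an $A$-specialization, and $E$ is the universal field of fractions of $A$ by the correspondence between specializations and prime matrix ideals recalled in \S\ref{localitzacio}.

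The main obstacle is the identification $R_\mathcal{P}\cong E_R$ together with the application of Lemma~\ref{lem:subringsofspecializations}: the lemma requires the subfield of $E'$ generated by $R$ to have a \emph{minimal} prime matrix ideal, which amounts to knowing that $\mathcal{P}$ is the least prime matrix ideal of $R$ compatible with $E$. I would secure this by showing that $E_R$ is itself a copy of $R_\mathcal{P}$ carrying the minimal prime matrix ideal among the epic $R$-fields in play — in the situations of interest $R=K*U(N)$ has a genuine universal field of fractions realized as the subfield of $E$ generated by $R$, mirroring Proposition~\ref{prop:canonicalfieldoffractions}(i),(ii) — so that $\mathcal{P}$ is forced to coincide with the prime matrix ideal of $E_R$ and the minimality hypothesis of Lemma~\ref{lem:subringsofspecializations} is met. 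Once this is in place, the embedding and universality claims follow formally, and the standard-monomial independence of the first paragraph is inherited from $E$.
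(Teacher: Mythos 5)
Your outline matches the paper's strategy at the beginning (extend the derivations to $R_{\mathcal P}$ via Lemma~\ref{lem:extensionofderivations}, check condition (ii) of Lemma~\ref{lem:conditionsforcrossedproduct} by uniqueness of extended derivations) and at the end (the universality argument via Lemma~\ref{lem:subringsofspecializations} is essentially the paper's). But there is a genuine gap in the middle, and you have located it yourself without closing it: the identification of $R_{\mathcal P}$ with the subfield $E_R$ of $E$ generated by $R$. Hypothesis (2) gives that $\mathcal P$ is a \emph{minimal} prime matrix ideal, but minimal prime matrix ideals need not be unique (this is the whole point of Section~\ref{sec:other}), so a priori $E_R$ could be the epic $R$-field attached to a different prime matrix ideal, with no specialization in either direction relating it to $R_{\mathcal P}$. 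Your proposed resolution --- ``$E_R$ is itself a copy of $R_{\mathcal P}$ carrying the minimal prime matrix ideal among the epic $R$-fields in play, mirroring Proposition~\ref{prop:canonicalfieldoffractions}(i),(ii)'' --- assumes exactly what must be proved, and the retreat to ``the situations of interest'' abandons the proposition as stated. The same gap infects your verification of point (1) of Remark~\ref{rem:extensioncrossedproduct}: the left $R_{\mathcal P}$-independence of the standard monomials in $\mathcal C$ inside $E$ cannot be ``read off'' from $E_R\cong R_{\mathcal P}$ alone.

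The missing idea is an auxiliary epic $K*U(H)$-field that \emph{visibly} contains $R_{\mathcal P}$. The paper first builds the crossed product $R_{\mathcal P}*U(H/N)$ abstractly (Lemma~\ref{lem:conditionsforcrossedproduct} supplies the PBW basis for free, so no independence inside $E$ is needed at this stage), then embeds it into its canonical field of fractions $D=\mathfrak{D}(R_{\mathcal P}*U(H/N))$ via Proposition~\ref{prop:canonicalfieldoffractions}. Since $D$ is then a field of fractions of $K*U(H)$, universality of $E$ yields a $K*U(H)$-specialization $\rho\colon E\to D$; because the subfield of $D$ generated by $R$ is precisely $R_{\mathcal P}$ and $\mathcal P$ is minimal, Lemma~\ref{lem:subringsofspecializations} applies and shows $\rho$ restricts to an isomorphism $E_R\cong R_{\mathcal P}$. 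The independence of the standard monomials over $E_R$ in $E$ then follows by pushing a putative dependence relation through $\rho$ into $D$, where PBW independence holds by construction. Without constructing $D$ there is no specialization to compare $E_R$ with $R_{\mathcal P}$, and the argument does not go through.
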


\begin{proof}
First note that since $R_\mathcal{P}$ is a field of fractions,
$\mathcal{P}$ is a minimal prime matrix ideal (cf. \S \ref{localitzacio}).

We view $K*U(H)$ as $R*U(H/N)$. By
Lemma~\ref{lem:extensionofderivations}, for each $x\in H/N$, the
$k$-derivation $\delta_x$ of $R$ can be extended to $R_\mathcal{P}$.
We denote this  extension again by $\delta_x$.

We want to construct a crossed product $R_\mathcal{P}*U(H/N)$. For
that we see that the conditions of Lemma
\ref{lem:conditionsforcrossedproduct} are satisfied. The first one
is clearly satisfied because it is an equality in $R$. For the
second one, we have to verify the equality of two $k$-derivations of
$R_\mathcal{P}$.  Since this equality holds in $R$,  it also holds
over $R_\mathcal{P}$ because of
Lemma~\ref{lem:uniqueextensionderivation}.

Let $\mathcal{B}$ be a basis of $H/N$. By
Proposition~\ref{prop:canonicalfieldoffractions},
$R_\mathcal{P}*U(H/N)$ has a field of fractions
$R_\mathcal{P}*U(H/N)\hookrightarrow
D=\mathfrak{D}(R_\mathcal{P}*U(H/N))$. Clearly the restriction
$K*U(H)\hookrightarrow D$ is a field of fractions of $K*U(H)$. Thus
there exists a $K*U(H)$-specialization $\rho$ from $E$ to $D$. By
Lemma~\ref{lem:subringsofspecializations}, $\rho$ gives by
restriction an isomorphism between the subfield $E_N$ of $E$
generated by $R$ and $R_\mathcal{P}$. Moreover, the standard
monomials on $\mathcal{B}$ are linearly independent over $E_N$ in
$E$, because their images via $\rho$ are linearly independent over
$R_\mathcal{P}$ in $D$. Thus the subring of $E$ generated by $E_N$
and $\{\bar{x}\mid x\in H/N\}$ is a crossed product isomorphic to
$R_\mathcal{P}*U(H/N)$, because $\rho^{-1}\circ \delta$ and
$\rho^{-1}\circ \tau$ induce and action and twisting, respectively,
for this subring provided $\delta$ and $\tau$ are the action and the
twisting of $R_\mathcal{P}*U(H/N)$. Thus
$R_\mathcal{P}*U(H/N)\hookrightarrow E$ is a field of fractions of
$R_\mathcal{P}*U(H/N)$. To prove that it is the universal field of
fractions, observe that any $(R_\mathcal{P}*U(H/N))$-field
 is a $(K*U(H))$-field that contains the field
$R_\mathcal{P}$. By Lemma~\ref{lem:subringsofspecializations}, there
exists a $(K*U(H))$-specialization from $E$ that contains
$R_\mathcal{P}$, and thus, arguing as above, it also contains
$R_\mathcal{P}*U(H/N)$. Hence, such specialization  is also an
$(R_\mathcal{P}*U(H/N))$-specialization.
\end{proof}

Next corollary is relatively easy but it gives an idea of how \emph{weak} is the structure of crossed product.

\begin{coro}\label{coro:universalfieldforanyLiealgebra}
For each field $K$ with $k$ as a central subfield and each Lie
$k$-algebra $L$, there exists a field $D$ that contains $K$ and a
crossed product $D*U(L)$ that has a universal field of fractions.
\end{coro}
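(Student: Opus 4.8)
The plan is to realize the arbitrary Lie algebra $L$ as a quotient $H/N$ of a \emph{free} Lie algebra $H$ by an ideal $N$, and then to read off the statement from Proposition~\ref{prop:generalizationProp3.1Lichtman} applied to the sub-crossed product $K*U(N)\subseteq K*U(H)$. First I would fix a presentation $L\cong H/N$, where $H$ is the free Lie $k$-algebra on any generating set of $L$ and $N$ is the kernel of the canonical surjection $H\twoheadrightarrow L$; such a presentation exists for every Lie algebra. The decisive structural input is that subalgebras of free Lie algebras are again free (Shirshov--Witt), so the ideal $N$ is itself a free Lie $k$-algebra.

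Next I would choose any crossed product $K*U(H)$, for instance the trivial one $K\otimes_k U(H)$ with zero action and zero twisting, which is legitimate since $k$ is central in $K$. Because $H$ is free and $K$ is a field, Lemma~\ref{lem:crossedproductfir} shows that $K*U(H)$ is a fir, hence a Sylvester domain, and therefore admits a universal field of fractions $K*U(H)\hookrightarrow E$; this is exactly hypothesis~(1) of Proposition~\ref{prop:generalizationProp3.1Lichtman}. Writing $R=K*U(N)$ for the naturally embedded sub-crossed product (a crossed product of $K$ by $U(N)$), the same lemma applies because $N$ is free, so $R$ is also a fir. Consequently its universal field of fractions exists and equals the localization $R_{\mathcal{P}}$ at the prime matrix ideal $\mathcal{P}$ of non-full matrices, whose localization is a field of fractions of $R$; this furnishes hypothesis~(2).

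With both hypotheses verified, Proposition~\ref{prop:generalizationProp3.1Lichtman}, applied to the ideal $N\lhd H$ together with the identification $K*U(H)=R*U(H/N)$, yields that $K*U(H)$ extends to a crossed product $R_{\mathcal{P}}*U(H/N)$ and that the embedding $K*U(H)\hookrightarrow E$ extends to a \emph{universal} field of fractions $R_{\mathcal{P}}*U(H/N)\hookrightarrow E$. Setting $D=R_{\mathcal{P}}$ and recalling $H/N\cong L$, this is precisely a crossed product $D*U(L)$ possessing a universal field of fractions, and $D$ contains $K$, as required.

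The only genuinely delicate point I anticipate is the appeal to the freeness of $N$: one must ensure that the Shirshov--Witt theorem is invoked for the (possibly infinitely generated) ideal $N$ of $H$, so that Lemma~\ref{lem:crossedproductfir} can legitimately be used to make both $K*U(H)$ and $K*U(N)$ firs. Once that is secured, the remainder is a direct bookkeeping application of the machinery already established, with no further computation needed.
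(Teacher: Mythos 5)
Your argument is correct and follows essentially the same route as the paper's own proof: present $L$ as $H/N$ with $H$ free, use the Shirshov--Witt theorem and Lemma~\ref{lem:crossedproductfir} to see that both $K*U(H)$ and $R=K*U(N)$ are firs with universal fields of fractions, and then apply Proposition~\ref{prop:generalizationProp3.1Lichtman} to obtain the universal field of fractions of $R_{\mathcal{P}}*U(L)$. The only cosmetic difference is that you fix the trivial crossed product $K\otimes_k U(H)$, whereas the paper works with an arbitrary one; for this existence statement either choice is fine.
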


\begin{proof}
Let $X$ be a set of generators of $L$. Let $H$ be the free Lie
$k$-algebra on $X$. Consider the morphism of Lie algebras
$H\rightarrow L$ that is the identity on $X$, and let $N$ be the
kernel of this morphism. Note that $L\cong H/N$.

Consider a crossed product $K*U(H)$. Set $R=K*U(N)$. Then
$K*U(H)=R*U(L)$. By Lemma~\ref{lem:crossedproductfir}, $K*U(H)$ has
a universal field of fractions $E$. Since $N$ is also a free Lie
$k$-algebra, $R$ is a fir by Lemma~\ref{lem:crossedproductfir}. Thus
$R$ has a universal field of fractions and it is of the form
$R_\mathcal{P}$ where $\mathcal{P}$ is the prime matrix ideal
consisting of the nonfull matrices over $R$. By
Proposition~\ref{prop:generalizationProp3.1Lichtman}, there is a
crossed product $R_\mathcal{P}*U(L)$ and
$R_\mathcal{P}*U(L)\hookrightarrow E$ is its universal field of
fractions.
\end{proof}

Let $G$ be a group, and fix an isomorphism $G\cong H/N$ where $H$ is
a free group and $N$ is a normal subgroup (hence, it is a free
group) of $H$. Consider an ordering of $H$. For any field $K$
consider the group algebra $K[H]$. It was proved in
\cite[Proposition 2.5]{SanchezfreegroupalgebraMNseries}, that the
crossed product structure $K[H]=K[N]*H/N$ can be extended to
$K((N))*H/N$ and this is a subring of the Malcev-Neumann series
field $K((H))$. This result combined with the fact that the
universal field of fractions of $K[H]$ can be seen as a subring of
$K((H))$, allows us to prove a result analogous to
Corollary~\ref{coro:universalfieldforanyLiealgebra}  for the case of
groups. That is, for any field $K$ and any group $G$ there is a
field $D$ containing $K$ and a crossed product $D*G$ that has a
universal field of fractions.


\section{A field of fractions of a crossed product of a residually nilpotent Lie
algebra.}\label{sec:afieldoffractions}

Throughout this section, $k$ will denote a commutative field.

\medskip

In this section we present a ring of series introduced by A. I.
Lichtman in \cite{Lichtmanuniversalfields}. This ring of series
$K((H))$ is constructed from a crossed product $K*U(H)$ of a field
$K$ by $U(H)$ where $H$ is a residually nilpotent Lie algebra
satisfying the  Q-condition (see
Section~\ref{subsec:residuallynilpotentcase}). It will play the role
of the Malcev-Neumann series ring $K((G))$ constructed from a
crossed product $KG$ of a field $K$ by an ordered group $G$.

We will give a detailed exposition of the construction of the ring
of series for some reasons. First, we expect to clarify  and
generalize in some aspects the one given in
\cite{Lichtmanuniversalfields}. Secondly, in
Theorem~\ref{theo:LewintheoremforLiealgebras}  we prove that, for a free Lie algebra $H$, this power series ring contains the universal field of fractions of $K*U(H)$, this is an extension of   \cite[Theorem~1]{Lichtmanuniversalfields}. Having in mind the analogy between free groups and free Lie algebras, this result   can be seen as a counterpart of
Lewin's Theorem \cite{Lewin}. Moreover, as an application, we will  produce further examples of
elements with arbitrary inversion height.

The construction is divided in two parts. In
Section~\ref{subsec:nilpotentcase} we construct a ring of series for
a crossed product of a ring $R$ by $U(L)$ where $L$ is a nilpotent
Lie algebra. In Section~\ref{subsec:residuallynilpotentcase}, we
give the general construction using the preceding case. The main
idea of such construction is presented in the following argument,
which is a generalization of
\cite[Section~4]{Lichtmanuniversalfields}.

\medskip

Let $L$ be a Lie $k$-algebra, $R$ a $k$-algebra and $R*U(L)$ a crossed product.

Suppose that the \emph{center} of $L$,
$\mathcal{Z}(L)=\{x\in L\mid [L,x]=0\}$, is not zero. Fix  a nonzero element $u\in\mathcal{Z}(L)$.
The $k$-subspace $N=ku$ is an ideal of $L$ and $[L,N]=0$. Note that $R*U(N)$, the $k$-subalgebra of $R*U(L)$
generated by $R$ and $\bar{u}$, is a skew polynomial ring $R[\bar{u};\delta_u]$.

Let $x\in L$, then
\begin{equation}\label{eq:extensionderivation}
\bar{x}\bar{u}-\bar{u}\bar{x}=\overline{[x,u]}+t(x,u)=t(x,u)\in R\\
\end{equation}
Thus the restriction of the inner derivation of $R*U(L)$ determined
by $\bar{x}$ induces a $k$-derivation
$R[\bar{u};\delta_u]\rightarrow R[\bar{u};\delta_u]$, $f\mapsto
\bar{x}f-f\bar{x}$. Notice that it extends $\delta_x\colon
R\rightarrow R$, thus we will denote the extension again by
$\delta_x\colon R[\bar{u};\delta_u]\rightarrow R[\bar{u};\delta_u]$.

Introduce the new variable
$z=(\bar{u})^{-1}$, and let $R((z;\delta_{u}))$ be the power series
ring described in Section~\ref{sec:1.1}. Observe that if we want to extend $\delta_x$ to $R((z;\delta_u))$,
we have to define $\delta_x(z)=-z\delta_x(\bar{u})z$, and therefore
\begin{equation}\label{eq:imageofz}
 \delta_x(z)=\sum_{i=1}^\infty (-1)^i\delta_x^i(\bar{u})z^{i+1}.
\end{equation}

\begin{lem}\label{lem:extensionderivation}
For each $x\in L$, the derivation $\delta_x\colon R[\bar{u};\delta_u]\rightarrow R[\bar{u};\delta_u]$,
$f\mapsto \bar{x}f-f\bar{x}$ can be extended to a derivation
$$R((z;\delta_u))\rightarrow R((z;\delta_u)),\quad
\sum_{n}a_nz^n\mapsto \sum_n\delta_x(a_nz^n)=\sum_n\left(\delta_x(a_n)z^n+a_n\delta_x(z^n)\right),$$
where $\delta_x(z)$ is defined as in \eqref{eq:imageofz}.
\end{lem}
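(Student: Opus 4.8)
The plan is to take the coefficientwise formula displayed in the statement as the \emph{definition} of a map $D\colon R((z;\delta_u))\to R((z;\delta_u))$ and to prove that $D$ is a well-defined derivation extending $\delta_x$. Put $t:=\delta_x(\bar u)=t(x,u)\in R$, which indeed lies in $R$ by \eqref{eq:extensionderivation}, so that $\delta_x(z)=-z\,t\,z$ as in \eqref{eq:imageofz}. The one piece of structural input, beyond formal manipulation with series, is the compatibility of the two actions: since $[x,u]=0$, condition (ii) of Lemma~\ref{lem:conditionsforcrossedproduct} gives on $R$ the identity $[\delta_x,\delta_u]=\partial_t$, where $\partial_t(a)=ta-at$. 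This is exactly the relation that will reconcile $D$ with the twisted commutation of the series ring.

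First I would fix the bookkeeping. Rewriting the commutation rule \eqref{eq:commutationruleseries} (with $\alpha=\mathrm{id}$) in left--coefficient form gives $za=\sum_{k\ge 0}(-1)^k\delta_u^k(a)z^{k+1}$ for $a\in R$. For the $z$--adic order one notes that $\delta_x(a)\in R$ has order $\ge 0$ while $\delta_x(z)=-ztz$ has order $\ge 2$, so that $D(z^n)=\sum_{i+j=n-1}z^i\delta_x(z)z^j$ has order $\ge n+1$ for $n\ge 1$; for $n<0$ one uses $D(z^{-1})=-z^{-1}\delta_x(z)z^{-1}$ (cf.\ Lemma~\ref{lem:uniqueextensionderivation}) and gets the same bound, while $D(1)=0$. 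Hence in $D\big(\sum_n a_nz^n\big)=\sum_n\big(\delta_x(a_n)z^n+a_nD(z^n)\big)$ the summand indexed by $n$ has order $\ge n$; as the exponents are bounded below, every coefficient of $D(f)$ is a finite sum, so $D$ is well defined, and $D$ never lowers order, hence is continuous for the $z$--adic topology.

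For the derivation property I would use the criterion recalled before Lemma~\ref{lem:extensionofderivations}: $D$ is a derivation precisely when the defect $B(f,g):=D(fg)-D(f)g-fD(g)$ vanishes identically. Since $B$ is biadditive and, being built from the continuous maps $D$ and multiplication, is continuous in each variable, and since $R\cup\{z,\bar u\}$ generates a subring containing all finite Laurent sums $\sum_{|n|\le M}a_nz^n$ and hence $z$--adically dense, a standard induction on word length reduces the claim to $B(\gamma,h)=0$ for $\gamma\in R\cup\{z,\bar u\}$ and all $h$. Three of these are easy. For $\gamma,h\in R$ it is the hypothesis that $\delta_x$ is a $k$--derivation of $R$. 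The identity $D(fz)=D(f)z+fD(z)$ holds for every $f$ directly from the formula together with $D(z^{n+1})=D(z^n)z+z^nD(z)$, because right multiplication by $z$ only shifts exponents; iterating this handles right multiplication by each $z^m$, and the case $\gamma=\bar u$ then follows formally from $z\bar u=1$. Left multiplication by elements of $R$, reduced to monomials $h=bz^m$, likewise follows from these together with the $R$--derivation property.

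The hard part will be the single remaining identity $D(za)=D(z)a+zD(a)$ for $a\in R$, which also yields the last case $\gamma=z$. Here one must expand both sides as honest series: on the left $D$ is applied to $za=\sum_{k\ge0}(-1)^k\delta_u^k(a)z^{k+1}$, on the right one expands $-ztz\,a+z\,\delta_x(a)$, and the two are matched coefficient by coefficient. This is exactly where the powers $\delta_u^k$ must be commuted past $\delta_x$, and the resulting discrepancies $\delta_x\delta_u^k-\delta_u^k\delta_x$ are absorbed using $[\delta_x,\delta_u]=\partial_t$ from Lemma~\ref{lem:conditionsforcrossedproduct}(ii); this is the crux of the computation. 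Once it is in place, $D$ is a derivation, and it extends $\delta_x$ because $D|_R=\delta_x$ and $D(\bar u)=D(z^{-1})=-z^{-1}\delta_x(z)z^{-1}=t=\delta_x(\bar u)$, so $D$ and $\delta_x$ agree on the generators $R$ and $\bar u$ of $R[\bar u;\delta_u]$.
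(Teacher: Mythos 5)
Your strategy is sound and, in substance, parallel to the paper's: both arguments rest on the same two pillars, namely (a) the criterion that $\delta_x$ is a derivation iff $f\mapsto\bigl(\begin{smallmatrix} f & \delta_x(f)\\ 0 & f\end{smallmatrix}\bigr)$ is a ring homomorphism (your defect $B(f,g)$ is exactly the off-diagonal obstruction to this), (b) the identity $[\delta_x,\delta_u]=\partial_{t(x,u)}$ from Lemma~\ref{lem:conditionsforcrossedproduct}(ii), which is the only structural input needed to reconcile $\delta_x$ with the commutation rule of the series ring; and both pass from a dense subring to the full series ring by a continuity/completion argument. The packaging differs: the paper first introduces the finitely presented ring $S=R\langle z: za=az-z\delta_u(a)z\rangle$, checks that the single defining relation is killed by the matrix homomorphism, and only then completes to $R[[z;\delta_u]]$ and localizes; you instead define $D$ coefficientwise on the completed ring from the start and argue by density. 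Your order estimates and the reduction of the Leibniz rule to generators are correct.

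The genuine deficiency is that the step you yourself call ``the crux'' --- the verification of $D(za)=D(z)a+zD(a)$ for $a\in R$ --- is announced but not carried out, and this is precisely the computation that occupies most of the paper's proof; without it the lemma is not proved, since everything else in your argument is formal bookkeeping. Moreover, your formulation makes this step strictly harder than it needs to be: because your $D$ is defined on canonical series forms, you must apply it to the full expansion $za=\sum_{k\ge0}(-1)^k\delta_u^k(a)z^{k+1}$ and absorb the discrepancies $\delta_x\delta_u^k-\delta_u^k\delta_x$ for every $k$ (an induction on $k$ using $[\delta_x,\delta_u]=\partial_{t(x,u)}$ and the Leibniz rule for $\partial$, which you should at least set up). The paper's detour through $S$ is exactly the device that avoids this: there one only has to check compatibility with the \emph{finite} relation $za=az-z\delta_u(a)z$, after substituting $\delta_x(z)=-z\delta_x(\bar u)z$, which reduces after cancellation to the single identity $[\delta_x,\delta_u](a)=[\delta_x(\bar u),a]$; the infinite expansion is then handled once and for all by the completion $\hat S\cong R[[z;\delta_u]]$. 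I would either adopt that reduction or write out the inductive commutation of $\delta_x$ past $\delta_u^k$ explicitly.
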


\begin{proof}
Set $S=R\langle z\ ;\ za=az-z\delta_u(a)z,\ a\in R   \rangle$. So
that, the $k$-algebra  $S$ is isomorphic to $R\coprod _k k[z]$
modulo the two-sided ideal generated by $\{za=az-z\delta_u(a)z,\
a\in R \}$. Let  $\varepsilon_1\colon k[z]\to S$ and let
$\varepsilon _2 \colon R\to S$ be the induced $k$-algebra
homomorphism. By the universal property of the coproduct, there is a
ring homomorphism $\varphi\colon S\to R[[z;\delta_u]]$ such that,
for any $n\ge 1$, $\varphi (\varepsilon _1(z^n))=z^n$ and, for any
$a\in R$, $\varphi (\varepsilon _2(a))=a$. Therefore $\varepsilon
_1$ and $\varepsilon _2$ are injective homomorphisms. To ease the
notation, we just identify $R$ and $k[z]$ with their image in $S$
without making any reference to the embeddings $\varepsilon _1$ and
$\varepsilon _2$.

The existence of $\varphi$ also implies that the  powers of $z$ are right and left $R$ independent in $S$.
In addition,  by \cite[Proposition~4.1]{bergmancoproducts}, $S$ is generated by the powers of $z$. Since, by the definition of $S$, $zR\subseteq Sz$, for any $n\ge 1$ the ideal $Sz^n$ is two-sided.

Fix $s\in S$ and $n\ge 0$,   then $s=a_0+a_1z+\cdots +a_nz^n+r_n$ where $a_i\in R$ for $i\in \{0,\dots ,n\}$ and $r_{n}\in Sz^{n+1}$. Let $\pi _n\colon R[[z;\delta _u]]\to R[[z;\delta _u]]/R[[z;\delta _u]]z^n$ denote the canonical projection. Since $\pi _n\circ \varphi (s)=a_0+a_1z+\cdots +a_nz^n$, $a_0, \dots ,a_n$ are uniquely determined. This implies that $\pi _n\circ \varphi$ induces an isomorphism $S/Sz^n\cong R[[z;\delta _u]]/R[[z;\delta _u]]z^n$. Since $\varprojlim R[[z;\delta _u]]/R[[z;\delta _u]]z^n\cong R[[z;\delta _u]]$ we conclude that the completion of $S$ with respect to the topology induced by the two-sided ideals $\{Sz^n\}_{n\ge 0}$ is isomorphic to  $R[[z;\delta _u]]$ and that the isomorphism $\hat \varphi \colon \hat S\to R[[z;\delta _u]]$ is defined by $\hat\varphi (s)=\sum _{i=0}^\infty a_iz^i$. Finally, since $\bigcap _{n\ge 1}Sz^n=\{0\}$, $S$ embeds in $R[[z;\delta _u]]$ and this embedding sends $z\mapsto z$, $a\mapsto a$ for all $a\in R$ and
$za\mapsto za=\sum_{i\geq 1}(-1)^{i-1}\delta_u^{i-1}(a)z^i$.

Now we are ready to prove that $\delta_x$  extends to $R((z;\delta_u))$.
As a first step, we claim that $\delta_x$ can be extended to $S$ by setting $$\delta_x(z)=-z\delta_x(\bar{u})z.$$
To prove the claim we must show  that there is a morphism of $k$-algebras $\Phi\colon S\rightarrow \mathbb{T}_2(S)$,
$f\mapsto \bigl( \begin{smallmatrix} f & \delta_x(f) \\ 0 & f  \end{smallmatrix}\bigr)$, where
$\mathbb{T}_2(S)$ is the ring of $2\times 2$ upper triangular matrices over $S$.

There is a morphism of $k$-algebras $\Phi _1\colon k[z]\to \mathbb{T}_2(S)$ given by $\Phi _1(p(z))= \bigl( \begin{smallmatrix} p(z) & \delta_x(p(z)) \\ 0 & p(z) \end{smallmatrix}\bigr)$ for any $p(z)\in k[z]$.  There is also a morphism of  $k$-algebras $\Phi _2\colon R\to \mathbb{T}_2(S)$ given by $\Phi _2(a)= \bigl( \begin{smallmatrix} a & \delta_x(a) \\ 0 & a \end{smallmatrix}\bigr)$ for any $a\in R$. By the universal property of the coproduct, there is a unique algebra homomorphism $\Phi _3\colon R\coprod k[z]\to \mathbb{T}_2(S)$ such that $\Phi _3(z)=\Phi _1(z)$ and such that, for any $a\in R$, $\Phi _3(a)=\Phi _2(a)$.

We show that, for any $a\in R$, $za-az+z\delta_u(a)z\in \mathrm{Ker}\, \Phi _3$. This is equivalent to the matrix equality
$$\left(\begin{smallmatrix} z & \delta_x(z) \\ 0 & z \end{smallmatrix} \right) \left(\begin{smallmatrix} a & \delta_x(a) \\ 0 & a  \end{smallmatrix} \right)=
\left(\begin{smallmatrix} a & \delta_x(a) \\ 0 & a  \end{smallmatrix} \right)\left(\begin{smallmatrix} z & \delta_x(z) \\ 0 & z   \end{smallmatrix} \right)  -
 \left(\begin{smallmatrix} z & \delta_x(z) \\ 0 & z \end{smallmatrix} \right)\left(\begin{smallmatrix} \delta_u(a) & \delta_x(\delta_u(a))  \\ 0 & \delta_u(a)  \end{smallmatrix}
\right)\left(\begin{smallmatrix} z & \delta_x(z) \\ 0 & z \end{smallmatrix} \right), $$
which  yields
$$\left(\begin{smallmatrix}  za &  z\delta_x(a) + \delta_x(z)a \\ 0 & za \end{smallmatrix} \right)=
\left(\begin{smallmatrix} az &  a\delta_x(z)+\delta_x(a)z \\ 0 &  az \end{smallmatrix} \right) -
\left(\begin{smallmatrix}  z\delta_u(a)z &   z\delta_u(a)\delta_x(z)+ z\delta_x(\delta_u(a))z + \delta_x(z)\delta_u(a)z  \\  0 & z\delta_u(a)z \end{smallmatrix} \right)$$

Hence $za-az+z\delta_u(a)z\in \mathrm{Ker}\, \Phi _3$ if and only if the  equality
$$z\delta_x(a) + \delta_x(z)a \stackrel{(*)}{=}a\delta_x(z)+\delta_x(a)z + z\delta_u(a)\delta_x(z)+ z\delta_x(\delta_u(a))z + \delta_x(z)\delta_u(a)z $$
holds.

After substituting  $\delta_x(z)$ by $-z\delta_x(\bar{u})z$, the right hand side of the equality $(*)$ equals to
$$-az\delta_x(\bar{u})z+\delta_x(a)z+z\delta_u(a)z\delta_x(\bar{u})z-z\delta_x(\delta_u(a))z + z\delta_x(\bar{u})z\delta_u(a)z.$$
Now, the left hand side of $(*)$ is
\begin{eqnarray*}
\scriptstyle{ z\delta_x(a)+\delta_x(z)a } & \scriptstyle{=} & \scriptstyle{\delta_x(a)z-z\delta_u(\delta_x(a))z-z\delta_x(\bar{u})za} \\
& \scriptstyle{=} & \scriptstyle{\delta_x(a)z-z\delta_u(\delta_x(a))z-z\delta_x(\bar{u})az+z\delta_x(\bar{u})z\delta_u(a)z} \\
& \scriptstyle{=} & \scriptstyle{\delta_x(a)z-z\delta_u(\delta_x(a))z - z[\delta_x(\bar{u}),a]z - za\delta_x(\bar{u})z + z\delta_x(\bar{u})z\delta_u(a)z} \\
& \scriptstyle{=} & \scriptstyle{\delta_x(a)z-z\delta_u(\delta_x(a))z - z[\delta_x(\bar{u}),a]z -az\delta_x(\bar{u})z +z\delta_u(a)z\delta_x(\bar{u})z + z\delta_x(\bar{u})z\delta_u(a)z.}
\end{eqnarray*}
After eliminating equal terms on both sides of $(*)$, we see that it holds if and only if
$$-z\delta_u(\delta_x(a))z-z[\delta_x(\bar{u}),a]z = -z\delta_x(\delta_u(a))z.$$
Equivalently, $$-z([\delta_x,\delta_u](a)-[\delta_x(\bar{u}),a]    )z=0.$$
This last equality holds because by \eqref{eq:extensionderivation}
and Lemma~\ref{lem:conditionsforcrossedproduct}(ii),
$$[\delta_x,\delta_u](a)=[t(x,u),a]=[\delta_x(\bar{u}),a],\ \textrm{ for all } a\in R.$$
Therefore, $\Phi_3$ induces the map $\Phi \colon S \to \mathbb{T}_2(S)$ which must be a morphism of $k$-algebras. This finishes the proof of the claim.

\medskip

The embedding $S\hookrightarrow R[[z;\delta_u]]$   induces a morphism $\mathbb{T}_2(S)\rightarrow
\mathbb{T}_2(R[[z;\delta_u]])$. The
completion of $\mathbb{T}_2(S)$ with respect to the ideals
$\{\mathbb{T}_2(S)Z^n\}_{n\geq1}$, where
$Z=\bigl(\begin{smallmatrix}  z & 0 \\ 0 &
z\end{smallmatrix}\bigr)$, is $\mathbb{T}_2(R)[[Z;\Delta_u]]$,
where the derivation $\Delta_u$ is given by
$\Delta_u\bigl(\begin{smallmatrix}  a & b \\ 0 &
c\end{smallmatrix}\bigr)= \bigl(\begin{smallmatrix}  \delta_u(a) &
\delta_u(b) \\ 0 & \delta_u(c)\end{smallmatrix}\bigr)$ for all
$(\begin{smallmatrix}  a & b \\ 0 & c\end{smallmatrix}\bigr)\in
\mathbb{T}_2(R)$. Recall that $\mathbb{T}_2(R)[[Z;\Delta_u]]$ is
canonically isomorphic to $\mathbb{T}_2(R[[z;\delta_u]])$. We will
use this identification in what follows.

Note that the morphism $$\varphi _S\colon S\stackrel{\Phi}\rightarrow
\mathbb{T}_2(S)\rightarrow \mathbb{T}_2(R[[z;\delta_u]]), \quad
f\mapsto \bigl(\begin{smallmatrix}  f & \delta_x(f) \\ 0 & f
\end{smallmatrix}\bigr) \mapsto \bigl(\begin{smallmatrix}  f &
\delta_x(f) \\ 0 & f \end{smallmatrix}\bigr)
$$
satisfies that $\varphi _S(Sz^n)\subseteq \mathbb{T}_2(R)Z^n$ and thus induces morphisms
$\varphi_n\colon \frac{S}{Sz^n}\rightarrow \frac{\mathbb{T}_2(R)}{Z^n}$ such that for all $n\geq m$ the
diagram
$$\xymatrix{ \frac{S}{Sz^n} \ar[r] ^{\varphi _n}\ar[d]  &
\frac{\mathbb{T}_2(R)}{\mathbb{T}_2(R)Z^n} \ar[d]\\
   \frac{S}{Sz^m} \ar[r]^{\varphi _m}       &
 \frac{\mathbb{T}_2(R)}{\mathbb{T}_2(R)Z^m}}$$
 is commutative.
Therefore there exists a morphism of $k$-algebras
$$R[[z;\delta_u]]\cong \hat S\rightarrow \mathbb{T}_2(R[[z;\delta_u]]),
\quad \sum_{i\geq 0} a_iz^i\mapsto \left(\begin{array}{cc}
\sum_{i\geq 0}a_iz^i & \sum_{i\geq 0}\delta_x(a_iz^i) \\ 0 &
\sum_{i\geq 0}a_iz^i\end{array}\right),$$ where $\delta_x$ is the
composition  $S\stackrel{\delta_x}{\rightarrow} S\hookrightarrow
R[[z;\delta_u]]$. Thus the derivation $\delta_x\colon S\rightarrow
S$ extends to $\delta_x\colon R[[z;\delta_u]]\rightarrow R[[z;\delta_u]]$ as
$$\delta_x\left(\sum_{i\geq 0} a_iz^i\right)=\sum_{i\geq 0} \delta_x(a_iz^i).$$
Since $R((z;\delta_u))$ is the left Ore localization of
$R[[z;\delta_u]]$ at the set $\{1,z,\dotsc,z^n,\dotsc\}$, $\delta_x$
also extends to a derivation  of $R((z;\delta_u))$ in a unique way
(cf. Lemma~\ref{lem:extensionofderivations}).

Since $\bar{u}=z^{-1}$, the
equality $za=az-z\delta_u(a)z$ implies that $\bar{u}a=a\bar{u}+\delta_u(a)$ for each $a\in R$; hence $R[\bar{u};\delta _u]\hookrightarrow R((z;\delta_u))$. Also, as  $\delta _x(z^{-1})=-z^{-1}\delta _x(z)z^{-1}$, $\delta _x(z^{-1}) =\delta _x(\bar{u})$. So that the derivation $\delta _x$ has the  properties claimed in the statement.\end{proof}

\begin{coro}\label{coro:constructingringofseries}
 There exists a crossed product structure $R((z;\delta_u))*U\left( \frac{L}{N}\right)$
such that $$R*U(L)=R[\bar{u};\delta_u]*U\left( \frac{L}{N}\right)\hookrightarrow R((z;\delta_u))*U\left(\frac{L}{N}\right).$$
\end{coro}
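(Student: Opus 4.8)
The plan is to realize $R((z;\delta_u))*U(L/N)$ as an extension, along the ring inclusion $R[\bar u;\delta_u]\hookrightarrow R((z;\delta_u))$, of the crossed product $R*U(L)$. First I would apply Lemma~\ref{lem:importantpropertycrossedproduct} with $H=N$ to identify $R*U(L)=R[\bar u;\delta_u]*U(L/N)$; in this presentation the action sends $x\in L/N$ to the derivation $\delta_x\colon f\mapsto \bar x f-f\bar x$ of $T=R[\bar u;\delta_u]$, and the twisting $t$ takes its values in $T$. Since $T$ embeds into $R((z;\delta_u))$, the same twisting is available over the larger ring, so by Remark~\ref{rem:extensioncrossedproduct} it is enough to produce a compatible extension of the action to $\Der_k(R((z;\delta_u)))$ and to check that condition~(ii) of Lemma~\ref{lem:conditionsforcrossedproduct} survives; the cocycle condition~(i) need not be rechecked, as the twisting is unchanged and its defining identity already holds inside $T$.

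The extension of the action is exactly the content of the preceding Lemma~\ref{lem:extensionderivation}: for each $x\in L/N$ it provides a derivation of $R((z;\delta_u))$ extending $\delta_x$, acting term-by-term on series and determined by its restriction to $R$ together with the value $\delta_x(z)$ from \eqref{eq:imageofz}. I would record that, since any such term-by-term derivation is pinned down by its values on $R$ and on $z$, this extension is unique of its kind; consequently $x\mapsto\delta_x$ stays $k$-linear as a map $L/N\to\Der_k(R((z;\delta_u)))$, because $a\delta_x+b\delta_y$ and $\delta_{ax+by}$ are then two such extensions of the same derivation of $T$ and hence equal.

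The heart of the matter is condition~(ii), that is, $[\delta_x,\delta_y]=\delta_{[x,y]}+\partial_{t(x,y)}$ as derivations of $R((z;\delta_u))$, and I would prove it by the same uniqueness principle. Both sides act term-by-term on series: the left side as a commutator of two term-by-term derivations, and $\partial_{t(x,y)}$ because $t(x,y)\in T$ and left and right multiplication by a fixed element respect the $z$-adic filtration. Moreover the two sides agree on $R$ and on $\bar u=z^{-1}$, since the identity already holds in the crossed product $T*U(L/N)$; as such a derivation is determined by its values on $R$ and $z$ (note that $\delta(z)=-z\,\delta(\bar u)\,z$ is forced by $\delta(\bar u)$, cf. Lemma~\ref{lem:uniqueextensionderivation}), the two derivations coincide on all of $R((z;\delta_u))$. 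This uniqueness step is the main obstacle; once it is cleared, conditions~(i) and~(ii) of Lemma~\ref{lem:conditionsforcrossedproduct} hold over $R((z;\delta_u))$, so that lemma yields the crossed product $R((z;\delta_u))*U(L/N)$ with the standard monomials on a basis of $L/N$ forming a left $R((z;\delta_u))$-basis. In particular those monomials are left $R((z;\delta_u))$-independent, so condition~(1) of Remark~\ref{rem:extensioncrossedproduct} comes for free; and since this basis restricts to the PBW basis of $T*U(L/N)$ while $T\hookrightarrow R((z;\delta_u))$ is injective, the resulting map $R*U(L)=R[\bar u;\delta_u]*U(L/N)\hookrightarrow R((z;\delta_u))*U(L/N)$ is injective, which is the asserted embedding.
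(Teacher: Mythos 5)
Your proposal is correct and follows essentially the same route as the paper: extend each $\delta_x$ via Lemma~\ref{lem:extensionderivation}, invoke Remark~\ref{rem:extensioncrossedproduct} to reduce everything to condition~(ii) of Lemma~\ref{lem:conditionsforcrossedproduct}, and verify that condition by observing that both sides agree on $R[\bar u;\delta_u]$ (hence on $z$ by Lemma~\ref{lem:uniqueextensionderivation}) and act continuously on series. The only cosmetic difference is that you package the final step as uniqueness of term-by-term derivations determined by their values on $R$ and $z$, whereas the paper carries out the equivalent truncation argument ($f=f_1+f_2$ with $f_2$ of arbitrarily high order) explicitly, relying on \eqref{eq:uniquenessofextension}.
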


\begin{proof}
By the proof of
Lemma~\ref{lem:importantpropertycrossedproduct}, we know that, for
each $w\in L/N$, there exists $x\in L$ such that the $k$-derivation
$\delta_w\colon R[\bar{u};\delta_u]\rightarrow R[\bar{u};\delta_u]$ given by the definition of
$R[\bar{u};\delta_u]*U(L/N)$ coincides with
$\delta_x\colon R[\bar{u};\delta_u]\rightarrow R[\bar{u};\delta_u]$. We  extend it to a $k$-derivation of
$R((z;\delta_{u}))$ as in Lemma~\ref{lem:extensionderivation},
we denote the extension also by $\delta_w$.
This gives a map $\xi \colon L/N\to \mathrm{Der} _k(R((z;\delta_{u})))$
which is $k$-linear by the proof of Lemma~\ref{lem:importantpropertycrossedproduct}.
By Remarks~\ref{rem:extensioncrossedproduct}, to obtain our result
it only remains to prove that condition (ii) in
Lemma~\ref{lem:conditionsforcrossedproduct} is satisfied.

By \eqref{eq:extensionderivation} and  Lemma~\ref{lem:extensionderivation},
\begin{equation} \label{eq:uniquenessofextension}
\delta_w(f)=\sum_{i\geq l }a_i'z^i \quad \textrm{ for any }
f=\sum_{i\geq l} a_iz^i\in R((z;\delta_{u})).
\end{equation}
That is, if the coefficients of degree smaller than $l$ of $f$ are
zero, then the coefficients of degree smaller than $l$ of
$\delta_w(f)$ are zero.

 Let $f=\sum_{i\geq l} a_iz^i \in
R[[z;\delta_{u}]]$ and $w_1,w_2\in L/N$, we want to prove that
$$[\delta_{w_1},\delta_{w_2}](f)=\delta_{[w_1,w_2]}(f)-[f,t(w_1,w_2)].$$
Fix $p>l$ and set  $f=f_1+ f_2$ where $f_1=\sum_{i\geq l}^pa_iz^i$
and $f_2=\sum_{i\geq p+1}a_i z^i$. The derivations $[\delta_{w_1},
\delta_{w_2}]$ and $\delta_{[w_1,w_2]}-\partial_{t(w_1,w_2)}$
coincide on $R[\bar{u};\delta_u]$, hence on $z$ by
Lemma~\ref{lem:uniqueextensionderivation}, and therefore on $f_1$.
Thus
$$[\delta_{w_1}, \delta_{w_2}](f)=[\delta_{w_1}, \delta_{w_2}](f_1)+[\delta_{w_1}, \delta_{w_2}](f_2)=
\delta_{[w_1,w_2]}(f_1)-\partial_{t(w_1,w_2)}(f_1) + \sum_{i\geq
p+1}c_iz^i.$$ Since $p$ was arbitrary, both derivations coincide on
$f$ and  we obtain our result.
\end{proof}

\subsection{The case of hypercentral Lie
algebras}\label{subsec:nilpotentcase}

A Lie $k$-algebra $L$ is \emph{hypercentral} if there exist an
ordinal $\nu$ and a chain of ideals $\{L_\mu\}_{\mu\leq\nu}$ of $L$
that satisfy the following conditions:
\begin{enumerate}[(i)]
\item $L_0=0,\ L_\nu=L$.
\item $L_\mu\subset L_{\mu+1}$ for all $0\leq \mu<\nu$.
\item $L_{\mu'}=\bigcup_{\mu<\mu'}L_\mu$ for all limit ordinals $\mu'\leq
\nu$.
\item $[L,L_{\mu+1}]\subseteq L_\mu$ for all $\mu<\nu$, or equivalently, $L_{\mu+1}/L_\mu$ is contained in the center of $L/L_{\mu}$.
\end{enumerate}
We will say that $\{L_\mu\}_{\mu\leq\nu}$ is an \emph{hypercentral
series} of $L$.

For our purposes, the most important example of hypercentral Lie
algebra  is that of a nilpotent Lie algebra. Indeed, if $L$ is a
nilpotent Lie $k$-algebra,  it is enough to choose $L_0=0$,
$L_1=\mathcal{Z}(L)$, and for $i\geq 1$,
$L_{i+1}/L_i=\mathcal{Z}(L/L_i)$. It is not difficult to prove
that any hypercentral Lie algebra is locally nilpotent.

\medskip

Fix a hypercentral Lie $k$-algebra $L$ together with a
hypercentral series $\{L_\mu\}_{\mu\leq\nu}$ of $L$. Let $R$ be a
 $k$-algebra  and consider a crossed product
$R*U(L)$.

For each $0\leq \mu<\nu$, we pick in $L_{\mu+1}$ a set of elements
$\mathcal{B}_\mu$ which gives a basis of $L_{\mu+1}/L_{\mu}$, and we
endow $\mathcal{B}_\mu$ with a well-ordered set structure.  Set
$\mathcal{B}=\bigcup_{\mu<\nu} \mathcal{B}_\mu$. Observe that
$\mathcal{B}$ is a basis of $L$. Then we order $\mathcal{B}$
extending the ordering in each $\mathcal{B}_\mu$ in the following
way: given $u_1\in \mathcal{B}_{\mu_1}$ and $u_2\in
\mathcal{B}_{\mu_2}$ we set
\begin{equation}\label{eq:orderingbasis}
u_1<u_2\quad \textrm{iff}\quad  \left\{\begin{array}{l} \mu_1<\mu_2,
\textrm{ or }
\\ \mu_1=\mu_2  \textrm{ and  } u_1 \textrm{ is smaller than } u_2
 \textrm{ in  } \mathcal{B}_{\mu_1}. \end{array}\right.
\end{equation}
Then $(\mathcal{B},<)$ is a well-ordered set. Thus, we can suppose
that there exists an ordinal $\varepsilon$ such that $\mathcal{B}=\{u_\gamma\}_{0\leq \gamma<\varepsilon}$ and $u_{\gamma _1}\le u_{\gamma _2}$ if and only if $\gamma _1\le \gamma_2$.

For each $0\leq\beta\leq\varepsilon$, set $N_\beta$ as the
$k$-subspace of $L$ generated by $\{u_\gamma\mid\gamma<\beta\}$. By
convention, $N_0=0$. Observe that $N_\beta$ is an ideal of $L$,
hence a Lie subalgebra of $L$, and that $$[L,u_\beta]\subset
N_\beta.$$

By transfinite induction, we construct a  ring of series
$R((N_\beta))$  and  a crossed product $R((N_\beta))*U(L/N_\beta)$,
for each $\beta\leq \varepsilon$, such that the following properties
are satisfied for $\gamma<\beta\leq\varepsilon$

\begin{enumerate}[(a)]
\item $R((N_\gamma))\hookrightarrow R((N_\beta))$,
\item $R*U(L)\hookrightarrow
R((N_\gamma))*U(L/N_\gamma)\hookrightarrow
R((N_\beta))*U(L/N_\beta)$ extending the embedding of (a) in the
natural way.
\end{enumerate}

We define $R((N_0))=R$. Let $0<\beta$ be an ordinal and suppose that
we have defined $R((N_\gamma))$ for all $\gamma<\beta$ such that
conditions (a) and (b) are satisfied. Suppose first that $\beta$ is
not a limit ordinal, thus $\beta=\gamma+1$ for some ordinal
$\gamma$. Set $R_\gamma=R((N_\gamma))$ and
$T_\gamma=R_\gamma[\bar{u}_\gamma;\delta_{u_\gamma}]$. Introduce a
new variable $z_\gamma=(\bar{u}_\gamma)^{-1}$. Define
$R((N_{\beta}))=R_\gamma ((z_\gamma;\delta_{u_\gamma}))$. By
Corollary~\ref{coro:constructingringofseries}, there exists a
crossed product structure $R((N_\beta))*U(L/N_{\beta})$ such that
$$R*U(L)\hookrightarrow R((N_\gamma))*U(L/N_\gamma)
\hookrightarrow R((N_{\beta}))*U(L/N_{\beta}).$$ Thus conditions (a)
and (b) are satisfied.

Suppose now that $\beta$ is a limit ordinal.  Define
$R((N_\beta))=\bigcup_{\gamma<\beta}R((N_\gamma))$. Set
$$M=\varinjlim_{\gamma<\beta} R((N_\gamma))*U(L/N_\gamma).$$
We want to prove that $M$ has a natural crossed product structure of
$R((N_\beta))$ by $U(L/N_\beta)$. We show that $M$ satisfies
conditions (i) and (ii) in the definition of a crossed product. For
that it is helpful to have in mind the proof of
Lemma~\ref{lem:importantpropertycrossedproduct}.

Consider
$\mathcal{S}=\{u_\alpha\}_{\beta\leq\alpha<\varepsilon}\subseteq L$.
Let $\mathcal{M}$ be the set of standard monomials on $\mathcal{S}$.
Abusing notation, we may suppose that $\mathcal{M}\subseteq
R((N_\gamma))*U(L/N_\gamma)$ for all $\gamma<\beta$, and  the
embedding $R((N_{\gamma_1}))*U(L/N_{\gamma_1})\hookrightarrow
R((N_{\gamma_2}))*U(L/N_{\gamma_2})$ can be seen as the identity on
$\mathcal{M}$ for $\gamma_1<\gamma_2<\beta$.

Let $f\in M$. There exists $\gamma<\beta$ such that $f$ is a finite
sum of the form $\sum_{m\in\mathcal{M}}f_mm$ with $f_m\in
R((N_\gamma))$ for all $m\in\mathcal{M}$. Moreover, given
$f_{m_1},\dotsc, f_{m_n}\in R((N_\beta))$, there exists
$\gamma<\beta$ such that $f_{m_1},\dotsc,f_{m_n}\in R((N_\gamma))$,
and $\sum_{i=1}^n f_{m_i}m_i=0$ implies that
$f_{m_1}=\dotsb=f_{m_n}=0$. Hence $M$ has the additive structure of
$R((N_\beta))\otimes_k U(L/N_\beta)$

For each $\gamma<\beta$, we identify the subspace of $L$ generated
by $\mathcal{S}$ with a subspace of $L/N_\gamma$ in the natural way.
Let $x\in L$ be any $k$-linear combination of elements in
$\mathcal{S}$. For each $\gamma<\beta$, the crossed product
structure $R((N_\gamma))*U(L/N_\gamma)$ defines a derivation
$\delta_{x,\gamma}\colon R((N_\gamma))\rightarrow R((N_\gamma))$.
Moreover, if $\gamma_1<\gamma_2<\beta$, since
$R((N_{\gamma_1}))\subseteq R((N_{\gamma_2}))$ and
$R((N_{\gamma_1})*U(L/N_{\gamma_1}))\hookrightarrow
R((N_{\gamma_2}))*U(L/N_{\gamma_2})$ we have that
$\delta_{x,\gamma_1}$ equals $\delta_{x,\gamma_2}$ on
$R((N_{\gamma_1}))$. Set $\delta_x\colon R((N_\beta))\rightarrow
R((N_\beta))$ in the natural way, that is, for each $f\in
R((N_\beta))$ there exists $\gamma<\beta$ such that $f\in
R((N_\gamma))$, then we set $\delta_x(f)=\delta_{x,\gamma}(f)$. Then
$\delta\colon L/N_\beta\rightarrow \Der_k(R((N_\beta)))$, $x\mapsto
\delta_x$, is defined, where we are identifying the subspace of $L$
generated by $\mathcal{S}$ and $L/N_\beta$ in the natural way. Let
$x,y\in L/N_\beta$ and $f\in R((N_\beta))$. The equality
$\bar{x}f=f\bar{x}+\delta_x(f)$ holds because $f\in R((N_\gamma))$
for some $\gamma<\beta$. Let $t\colon L\times L\rightarrow
R((N_\beta))$ be given by the crossed product structure of
$(R*U(N_\beta))*U(L/N_\beta)\cong R*U(L)$. Hence, in particular
$t(x,y)\in R*U(N_\beta).$ Then
$\bar{x}\bar{y}-\bar{y}\bar{x}=\overline{[x,y]}+t(x,y)$. Thus
conditions \eqref{def:crossedproduct1} and
\eqref{def:crossedproduct2} are satisfied. Therefore
$M=R((N_\beta))*U(L/N_\beta)$ and $R*U(L)\hookrightarrow
R((N_\gamma))*U(L/N_\gamma)\hookrightarrow
R((N_\beta))*U(L/N_\beta)$ for $\gamma<\beta$.

We then define $R((L))=R((N_\varepsilon))$.

\begin{rems}\label{rem:constructionringofseries}
\begin{enumerate}[(a)]
\item The ring $R((L))$ depends on the order $<$   in
\eqref{eq:orderingbasis} of the basis
$\{u_\gamma\}_{0\leq\gamma<\varepsilon}$ of $L$ obtained from the
hypercentral series $\{L_\mu\}_{\mu\leq\nu}$ of $L$. The same
hypercentral series $\{L_\mu\}_{\mu\leq\nu}$ can give rise to
different rings of series $R((L))$ because $R((L))$ depends on the
basis $\mathcal{B}_\mu$ and the different well-ordered set
structures that each $\mathcal{B}_\mu$ can be given. Also, different
hypercentral series can give rise to the same ring of series
$R((L))$ if we choose the same basis $\{u_\gamma\}_{0\leq
\gamma<\varepsilon}$ and the same order $<$ obtained as in
\eqref{eq:orderingbasis}.

\item By construction, if $\beta<\varepsilon$, then
\begin{equation}\label{eq:ringofseries}
R((L))=R((N_\beta))((L/N_\beta))
\end{equation}
where we are identifying the ordered set $\{u_\alpha\}_{\beta\leq
\alpha<\varepsilon}$ with an ordered basis of the hypercentral Lie
algebra $L/N_\beta$  in the natural way.

\item If $R$ is a domain then $R((L))$ is also a domain.

\item If $L'$ is a subalgebra of $L$ with a basis $\mathcal{B}'\subseteq
\mathcal{B}$, where we understand that the order of $\mathcal{B}'$
is inherited from the one in $\mathcal{B}$, then
$R((L'))\hookrightarrow R((L))$ in the natural way. Indeed, if we
define $N_{\beta}'=\{u_\gamma\mid u_\gamma\in \mathcal{B}',\
\gamma<\beta\}$, then $R((N_{\beta}'))\subseteq R((N_{\beta}))$ in
the natural way for each $0\leq\beta<\varepsilon$. \qed
\end{enumerate}
\end{rems}

Now we want to define the so called \emph{least element map}
$\ell\colon R((L))\rightarrow R$. Let $f\in R((L))$. Let $\beta_1$
be the least ordinal such that $f\in R((N_{\beta_1}))$.
Note that $\beta_1$ is not a limit ordinal. If $\beta_1=0$, i.e.
$f\in R$, we define $\ell(f)=f$. Suppose $\beta_1\neq 0$. Thus there
exists an ordinal $\gamma_1$ such that $\beta_1=\gamma_1+1$. By
construction,
$R((N_{\beta_1}))=R((N_{\gamma_1}))((z_{\gamma_1};\delta_{u_{\gamma_1}}))$.
Hence $f$ is a series in $z_{\gamma_1}$ with coefficients in
$R((N_{\gamma_1}))$. Let $f_1\in R((N_{\gamma_1}))$ be the
coefficient of the least element in $\supp f$ as a series in
$z_{\gamma_1}$. Let $\beta_2$ be the least ordinal such that $f_1\in
R((N_{\beta_2}))$. If $\beta_2=0$, we define $\ell(f)=f_1$. If
$\beta_2\neq0$, there exists an ordinal $\gamma_2$ such that
$\beta_2=\gamma_2+1$. By construction,
$R((N_{\beta_2}))=R((N_{\gamma_2}))((z_{\gamma_2};\delta_{u_{\gamma_2}}))$.
Thus $f_1$ is a series in $z_{\gamma_2}$ with coefficients in
$R((N_{\gamma_2}))$. Let $f_2\in R((N_{\gamma_2}))$ be the
coefficient of the least element in $\supp f_1$ as a series in
$z_{\gamma_2}$. Let $\beta_3$ be the least ordinal such that $f_2\in
R((N_{\beta_3}))$. If $\beta_3=0$, we define $\ell(f)=f_2$. If
$\beta_2\neq0$, there exists an ordinal $\gamma_3$, \dots

Continuing in this way we obtain a descending chain of nonlimit
ordinals
$$\beta_1=\gamma_1+1>\beta_2=\gamma_2+1>\beta_3=\gamma_3+1>\dotsb$$
Note that if $\beta_r\neq0$, then $\beta_r=\gamma_r+1$ and $f_r\in
R((N_{\gamma_r}))((z_{\gamma_r};\delta_{u_{\gamma_r}}))$ and
$\beta_{r+1}$ is defined. Hence, since the set of ordinals
$\{\beta\mid 0\leq\beta<\varepsilon\}$ is a well ordered set, there
exists a natural $n$ such that $\beta_n=0$. We define
$\ell(f)=f_{n-1}$. We say that $\ell(f)$ is the \emph{least element}
of $f$.

We collect some properties of the least element map in the following
Lemma.

\begin{lem}\label{lem:propertiesleastelementmap}
Let $\ell\colon R((L))\rightarrow R$ be the least element map. Let
$f,g\in R((L))$. The following hold true:
\begin{enumerate}[\rm(i)]
\item $\ell(f)=f$ if, and only if, $f\in R$.
\item $\ell(f)=0$ if, and only if, $f=0$.
\item Let $L'$ be a subalgebra of $L$ with a basis
$\mathcal{B}'\subseteq \mathcal{B}$, where we understand that the
order of $\mathcal{B}'$ is inherited from the one in $\mathcal{B}$.
If $f\in R((L'))$, then the least element of $f$ viewed as an
element of $R((L'))$ coincides with $\ell(f)$.

\item  If $R$ is a domain, then $\ell(fg)=\ell(f)\ell(g)$.
\item If $\ell(f)$ is invertible in $R$, then $f$ is invertible in
$R((L))$. If $R$ is a domain, the converse is true.
\end{enumerate}
\end{lem}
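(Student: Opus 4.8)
The plan is to establish the five assertions in order, running each argument by transfinite induction along the well-ordered basis $\{u_\gamma\}_{0\le\gamma<\varepsilon}$ underlying the construction of $R((L))$, and reducing at each successor stage to the skew power series ring $R((N_\gamma))((z_\gamma;\delta_{u_\gamma}))$ of Section~\ref{sec:1.1}. Statements (i) and (ii) are immediate from the definition of $\ell$. For (i): the codomain of $\ell$ is $R$, so $\ell(f)=f$ forces $f\in R$, and conversely $\ell$ fixes $R$ pointwise by definition. For (ii): if $f\neq0$, then at every stage of the peeling process the extracted coefficient is, by definition, the coefficient of an element of a nonempty support, hence nonzero; thus each intermediate $f_r$ is nonzero and in particular $\ell(f)=f_{n-1}\neq0$. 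The reverse implication is trivial since $\ell(0)=0$.

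For (iii) I would induct on the outermost variable of $f$. Writing $\beta_1(f)=\gamma+1$ for the least ordinal with $f\in R((N_{\beta_1}))$, the key point is that, since $f\in R((L'))$ only involves the variables $z_\mu$ with $u_\mu\in\mathcal{B}'$ (Remark~\ref{rem:constructionringofseries}(d) provides the compatible inclusion $R((L'))\hookrightarrow R((L))$ via $R((N'_\beta))\subseteq R((N_\beta))$), the largest basis element genuinely occurring in $f$ already lies in $\mathcal{B}'$. Hence the outermost variable is the same whether $f$ is viewed in $R((L'))$ or in $R((L))$, the extracted coefficient $f_1$ of the least power agrees, and $f_1$ again lies in $R((L'))$. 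Applying the induction hypothesis to $f_1$ completes the step; the base case $f\in R$ is trivial.

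The main work, and the main obstacle, is (iv), for which the domain hypothesis is essential. I would reduce, by induction on the larger of the two outermost indices of $f$ and $g$, to a single skew power series variable $z=z_\gamma$ over $T=R((N_\gamma))$, which is a domain by Remark~\ref{rem:constructionringofseries}(c). Writing $f=az^m+(\text{terms of order}>m)$ and $g=bz^n+(\text{terms of order}>n)$ with $a,b\in T$ nonzero, the crux is that the commutation rule $za=az-z\delta_u(a)z$ raises the $z$-order, so that $z^m b=bz^m+(\text{higher order})$ and therefore
\[
fg = ab\,z^{m+n}+(\text{terms of order}>m+n).
\]
Since $T$ is a domain, $ab\neq0$, so the outermost variable of $fg$ is again $z_\gamma$ and the coefficient of its lowest power is exactly $ab$. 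Thus $\ell(fg)=\ell(ab)$, while $\ell(f)=\ell(a)$ and $\ell(g)=\ell(b)$; the induction hypothesis yields $\ell(ab)=\ell(a)\ell(b)$, proving $\ell(fg)=\ell(f)\ell(g)$. The case where $f$ and $g$ have different outermost variables is identical, one of the two leading coefficients then being the whole element.

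Finally, for (v) I would again induct on the outermost variable for the first implication: if $f=az^m+(\text{higher order})$ with $a\in T=R((N_\gamma))$ the coefficient of the lowest power, then $\ell(f)=\ell(a)$ invertible forces, by the induction hypothesis, that $a$ is a unit of $T$; hence $az^m$ is a unit of $T((z;\delta_u))$ with inverse $z^{-m}a^{-1}$, and setting $g=az^m-f$ the series $\sum_{s\ge0}\bigl(g(az^m)^{-1}\bigr)^s$ converges because $g(az^m)^{-1}$ has strictly positive $z$-order, exactly as in the Malcev--Neumann inversion of Section~\ref{subsec:crossedproducts}. Then $f^{-1}=(az^m)^{-1}\sum_{s\ge0}\bigl(g(az^m)^{-1}\bigr)^s$, so $f$ is invertible. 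The converse, assuming $R$ a domain, is immediate from (iv): from $ff^{-1}=f^{-1}f=1$ and $\ell(1)=1$ (by (i)) we obtain $\ell(f)\ell(f^{-1})=\ell(f^{-1})\ell(f)=1$, so $\ell(f)$ is a unit of $R$.
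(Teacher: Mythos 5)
Your proposal is correct and follows essentially the same route as the paper's proof: reduce at each successor stage to a single skew power-series variable over the domain $R((N_\gamma))$, observe that leading coefficients multiply to give (iv), and use the descent along the chain of outermost variables together with the geometric-series inversion for (v), with the converse coming from (i) and (iv). You merely spell out details (the order computation $fg=ab\,z^{m+n}+\cdots$ and the explicit inverse series) that the paper leaves implicit.
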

\begin{proof}
(i) and (ii) follow easily from the construction.

(iii) follows by construction,  defining $N_{\beta}'$ as in
Remarks~\ref{rem:constructionringofseries}(d) and by induction on
$\beta$.

We prove (iv) by induction on the least ordinal $\beta$ such that
$f,g\in R((N_\beta))$. Observe that $\beta$ is not a limit ordinal.
If $\beta=0$, the result is clear by (i). Suppose that $\beta>0$ and
the result is true for $\gamma<\beta$. As $\beta=\gamma+1$,
 $f,g\in
R((N_\beta))=R((N_\gamma))((z_\gamma;\delta_{u_\gamma}))$. If both
$f,g\in R((N_\gamma))((z_\gamma;\delta_{u_\gamma}))\setminus
R((N_\gamma))$. Then $f_1g_1=(fg)_1\in R((N_\gamma))$ because of the
way  series in one indeterminate are multiplied and the fact that
$R((N_\gamma))$ is a domain (since $R$ is). Now observe that, by
construction, $\ell(f_1)=\ell(f)$, $\ell(g_1)=\ell(g)$ and
$\ell(fg)=\ell((fg)_1)$. Thus applying the induction hypothesis
$$\ell(fg)=\ell((fg)_1)=\ell(f_1g_1)=\ell(f_1)\ell(g_1)=\ell(f)\ell(g).$$
If $f\in R((N_\gamma))((z_\gamma;\delta_{u_\gamma}))\setminus
R((N_\gamma))$ but $g\in R((N_\gamma))$, then $(fg)_1=f_1g\in
R((N_\gamma))$. Using the induction hypothesis,
$$\ell(fg)=\ell((fg)_1)=\ell(f_1g)=\ell(f_1)\ell(g)=\ell(f)\ell(g).$$
The remaining case is done analogously.

(v) Suppose that $\ell(f)$ is invertible. Set $f_0=f$. By definition
of $\ell(f)$, there exists a descending chain of nonlimit ordinals
$$\beta_1=\gamma_1+1>\beta_2=\gamma_2+1>\dotsb >\beta_{n-1}=\gamma_{n-1}+1>\beta_n=0$$
such that $f_{i-1}\in
R((N_{\gamma_i}))((z_{\gamma_i};\delta_{u_{\gamma_i}}))$ and $f_i$
is the coefficient of the least element in $\supp f_{i-1}$ as a
series in $z_{\gamma_i}$, and $\ell(f)=f_{n-1}$. The fact that
$f_{n-1}$ is invertible in $R\subset R((N_{\gamma_{n-1}}))$ implies
that $f_{n-2}$ is invertible in
$R((N_{\gamma_{n-1}}))((z_{\gamma_{n-1}};\delta_{u_{\gamma_{n-1}}}))\subset
R((N_{\gamma_{n-2}}))$. Hence $f_{n-3}$ is invertible in
$R((N_{\gamma_{n-2}}))((z_{\gamma_{n-2}};\delta_{u_{\gamma_{n-2}}}))$
\dots  Continuing in this way, we get that $f_1\in
R((N_{\gamma_2}))((z_{\gamma_2};\delta_{u_{\gamma_2}}))\subseteq
R((N_{\gamma_1}))$ is invertible, and therefore $f=f_0 \in
R((N_{\gamma_1}))((z_{\gamma_1};\delta_{u_{\gamma_1}}))$ is
invertible.

Suppose now that $R$ is a domain and that  $f$ is invertible.
Applying (i) and (iv), we get
$\ell(f^{-1})\ell(f)=\ell(f^{-1}f)=1=\ell(1)=
\ell(ff^{-1})=\ell(f)\ell(f^{-1})$.
\end{proof}

As a first outcome, we obtain a slight generalization of
\cite[Section~5]{Lichtmanuniversalfields}.

\begin{coro}\label{coro:fieldoffractionsnilpotentlie}
Let $L$ be an hypercentral Lie $k$-algebra.  Let $K$ be a field with
$k$ as a central subfield. Any crossed product $K*U(L)$ is an Ore
domain and $K((L))$ is a field that contains the Ore field of
fractions of $K*U(L)$.
\end{coro}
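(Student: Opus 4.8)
The plan is to prove the statement in three stages: first that $K((L))$ is a field, then that $K*U(L)$ is an Ore domain, and finally that the Ore field of fractions embeds into $K((L))$ by the universal property of Ore localization. For the first stage I would note that $K((L))$ is a domain by Remarks~\ref{rem:constructionringofseries}(c), since $K$ is in particular a domain. To upgrade this to a field, take $f\in K((L))$ nonzero: by Lemma~\ref{lem:propertiesleastelementmap}(ii) the least element $\ell(f)$ is a nonzero element of $K$, hence invertible in $K$, and then Lemma~\ref{lem:propertiesleastelementmap}(v) gives that $f$ itself is invertible in $K((L))$. Thus every nonzero element is invertible and $K((L))$ is a field. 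Taking $\beta=\varepsilon$ in the construction gives $N_\varepsilon=L$, so $L/N_\varepsilon=0$ and the embedding $K*U(L)\hookrightarrow K((N_\varepsilon))*U(L/N_\varepsilon)=K((L))$ exhibits $K*U(L)$ as a subring of the field $K((L))$; in particular $K*U(L)$ is a domain.

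For the Ore property I would argue by transfinite induction on $\beta\le\varepsilon$ that each sub-crossed-product $K*U(N_\beta)$ is a left Ore domain. The base case $K*U(N_0)=K$ is a field, hence Ore. At a successor $\beta=\gamma+1$, the element $u_\gamma$ is central modulo $N_\gamma$ (because $[L,u_\gamma]\subset N_\gamma$), so Lemma~\ref{lem:importantpropertycrossedproduct} gives $K*U(N_{\gamma+1})=(K*U(N_\gamma))*U(N_{\gamma+1}/N_\gamma)$ with $N_{\gamma+1}/N_\gamma$ one-dimensional abelian; concretely this is the skew polynomial ring $(K*U(N_\gamma))[\bar u_\gamma;\delta_{u_\gamma}]$, cf. \S\ref{sec:1.1}. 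Since a skew polynomial ring over a left Ore domain is again a left Ore domain (\S\ref{sec:1.1}), the inductive step follows. At a limit ordinal $\beta$, every standard monomial involves only finitely many basis elements, so $K*U(N_\beta)=\bigcup_{\gamma<\beta}K*U(N_\gamma)$ is a directed union of left Ore domains; any two nonzero elements lie in a common $K*U(N_\gamma)$, where a common left multiple exists, so the union is again left Ore. Taking $\beta=\varepsilon$ yields that $K*U(L)=K*U(N_\varepsilon)$ is a left Ore domain, the right Ore condition being entirely symmetric. I expect this inductive stage to be the main point: the routine but essential verifications are that each successor step is literally a skew polynomial extension of the previous crossed product and that the directed-union argument correctly survives the passage to limit ordinals.

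For the last stage, since $K*U(L)$ is an Ore domain contained in the field $K((L))$, every nonzero element of $K*U(L)$ is invertible in $K((L))$. By the universal property of the Ore localization, the inclusion $K*U(L)\hookrightarrow K((L))$ factors through the Ore field of fractions of $K*U(L)$, producing a ring homomorphism from this field of fractions into $K((L))$; being a homomorphism out of a field it is injective. Hence $K((L))$ contains the Ore field of fractions of $K*U(L)$, which completes the proof.
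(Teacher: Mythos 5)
Your proposal is correct, and two of its three stages (the invertibility of nonzero series via the least element map, and the final appeal to the universal property of Ore localization) coincide with the paper's proof. Where you genuinely diverge is in establishing the Ore condition. The paper disposes of this in one line: a hypercentral Lie algebra is locally nilpotent, so every finite subset of $K*U(L)$ lies in some $K*U(L')$ with $L'$ finitely generated nilpotent, hence finite-dimensional; such a subring is an iterated skew polynomial ring $K[x_1;\delta_1]\dotsb[x_n;\delta_n]$ in finitely many variables, which is (Noetherian, hence) Ore, and the directed union of Ore domains is Ore. You instead run a transfinite induction along the chain of ideals $N_\beta$ underlying the construction of $K((L))$, using that each successor step $K*U(N_{\gamma+1})=(K*U(N_\gamma))[\bar u_\gamma;\delta_{u_\gamma}]$ is a skew polynomial extension (valid, since $N_\gamma$ is an ideal of $L$ and $N_{\gamma+1}/N_\gamma$ is one-dimensional with necessarily trivial twisting, as the paper itself notes just before Lemma~\ref{lem:extensionderivation}) and handling limit ordinals by the same directed-union argument. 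Both routes are sound; the paper's is shorter and independent of the chosen ordered basis, while yours has the mild advantage of staying entirely inside the machinery already set up for $K((L))$ and of making the directed-union step, which the paper leaves implicit, explicit. Your inductive approach also silently reproves that $K*U(L)$ is a domain, which the paper gets for free from Lemma~\ref{lem:graded ring}.
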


\begin{proof}
Any hypercentral Lie $k$-algebra is locally nilpotent. Thus $K*U(L)$ is locally an iterated skew
polynomial ring $K[x_1;\delta_1]\dotsb[x_n;\delta_n]$, which is an Ore domain. We have already seen that
$K*U(L)\hookrightarrow K((L))$. Now $K((L))$ is a field by Lemma~\ref{lem:propertiesleastelementmap}(v). By the universal property of the Ore localization, the Ore field of fractions of $K*U(L)$ is contained in $K((L))$.
\end{proof}

\subsection{The residually nilpotent
case}\label{subsec:residuallynilpotentcase}

Let $H$ be a Lie $k$-algebra. We say that $H$ is \emph{residually
nilpotent} if $H$ has a descending sequence of ideals
\begin{equation}\label{eq:residuallynilpotent}
 H=H_1\supseteq H_2\supseteq \dotsb\supseteq H_i\supseteq H_{i+1} \supseteq \dotsb
\end{equation}
with $[H,H_i]\subseteq H_{i+1}$ for all $i$, and such that
$\bigcap_{i\geq 1}^\infty H_i=0$. In this event, we call
$\{H_i\}_{i\geq 1}$ an \emph{RN-series} of $H$. The RN-series
$\{H_i\}_{i\geq 1}$ satisfies the \emph{Q-condition} if, for each
$i$, there exists a set of elements $\mathcal{C}_i$ of $H_i$ which
gives a basis of $H_i/H_{i+1}$ such that
$\mathcal{C}=\bigcup_{i=1}^\infty \mathcal{C}_i$ is a basis of $H$.
We also say that $\mathcal{C}$ is a \emph{Q-basis} of $H$.

Given a Q-basis $\mathcal{C}$, a \emph{canonical ordering} of
$\mathcal{C}$ is an ordering $<$ of $\mathcal{C}$ obtained as we are
going to see next. First we give an (arbitrary) well ordered set
structure to $\mathcal{C}_i$ for each $i\geq 1$.  Then we order
$\mathcal{C}$ extending the order in each $\mathcal{C}_i$ in the
following way: given $u_1\in\mathcal{C}_{i_1}$ and
$u_2\in\mathcal{C}_{i_2}$ we set
\begin{equation}\label{eq:orderingbasis2}
u_1<u_2\quad \textrm{iff}\quad  \left\{\begin{array}{l} i_1>i_2,
\textrm{ or }
\\ i_1=i_2  \textrm{ and  } u_{i_1} \textrm{ is smaller than }
u_{i_2}
 \textrm{ in  } \mathcal{C}_{i_1}. \end{array}\right.
\end{equation}
Notice that there may exist infinite canonical orderings of
$\mathcal{C}$.

We remark that $(\mathcal{C},<)$ need not be a well-ordered set, but
$\bigcup_{i=1}^m\mathcal{C}_i$ can be seen as a well ordered basis
of $H/H_{m+1}$ for any $m$ under the obvious identification.

Not all residually nilpotent Lie $k$-algebras have a
$Q$-basis. Important examples of residually nilpotent Lie
$k$-algebras with Q-basis are the following.

\begin{exs} \label{ex:qbasis}
\begin{enumerate}[(1)]
\item Suppose that $H$ is a nilpotent Lie $k$-algebra. Let  $H=H_1\supseteq \dotsb \supseteq
H_{n+1}=0$ be an RN-series. If $\mathcal{C}_i$ is a set of elements
of $H_i$ which gives a basis of $H_i/H_{i+1}$, then clearly
$\mathcal{C}=\bigcup_{i=1}^n\mathcal{C}_i$ is a Q-basis of $H$.

\item Suppose that $H$ is a \emph{graded} Lie $k$-algebra, that is,
there exists a sequence $\{N_i\}_{i\geq 1}$ of subspaces of $H$ such
that $H=\bigoplus_{i=1}^\infty N_i$ and $[N_i,N_j]\subseteq N_{i+j}$
for all $i,j\geq 1$. If we now define $H_i=\bigoplus_{j\geq i} N_j$,
and $\mathcal{C}_i$ as any basis of $N_i$ for each $i\geq 1$, then
it is easy to see that $\{H_i\}_{i=1}^\infty$ is an RN-series and
$\mathcal{C}=\bigcup_{i=1}^\infty\mathcal{C}_i$ a Q-basis of $H$.

Examples of these algebras are the Lie algebras arising
from torsion-free nilpotent and residually torsion-free nilpotent
groups using the lower central series (of the groups),  and the
graded Lie algebras that appear in
\cite[Examples~A,B,C,D]{ShalevZelmanov}. \qed
\end{enumerate}
\end{exs}

Fix a residually nilpotent Lie $k$-algebra $H$ with an RN-series
$\{H_i\}_{i=1}^\infty$ that has a Q-basis
$\mathcal{C}=\bigcup_{i=1}^\infty\mathcal{C}_i$ and a canonical
ordering of $\mathcal{C}$.

Note that for each $n>m\geq 1$, $H/H_m$ and $H_m/H_n$ are nilpotent {and hence hypercentral}
Lie $k$-algebras. Moreover $$H_m/H_m=0<H_{m-1}/H_m<\dotsb <H_2/H_m<H/H_m$$ is a chain of ideals of
$H/H_m$ with $[H/H_m,H_p/H_m]\subseteq H_{p+1}/H_m$, and $$H_n/H_n=0< H_{n-1}/H_n<\dotsb< H_m/H_n$$
is a chain of ideals of $H_m/H_n$ with $[H_m/H_n,H_{m+p}/H_n]\subseteq H_{m+p+1}/H_m$.

For $1\leq i\leq m-1$, let $\mathcal{B}_{m,i}$ be the basis of
$\frac{H_i/H_m}{H_{i+1}/H_m}\cong H_i/H_{i+1}$ obtained via the natural identification with $\mathcal{C}_i$.
Set $\mathcal{B}_m=\bigcup_{i=1}^{m-1}\mathcal{B}_{m,i}$ be the basis of $H/H_m$ with the well order
inherited from $\bigcup_{i=1}^{m-1}\mathcal{C}_i$.

Let $R$ be a $k$-algebra and consider a crossed product $R*U(H)$.

For each $m\geq 1$, set $R_m=R*U(H_m)$.  Then, with each basis
$\mathcal{B}_m$ fixed, we can construct the embedding
$R_m*U(H/H_m)\hookrightarrow R_m((H/H_m))$. If $n>m$, since
$R_m=R_n*U(H_m/H_n)$ and $R_n((H/H_n))=R_n((H_m/H_n))((H/H_m))$, we
obtain the commutativity of the following diagram

\begin{equation}\label{eq:commutativediagramseries1}
 \xymatrix{ R_m*U(H/H_m)\ar@{^{(}->}[r] \ar@{=}[d]  &
R_m((H/H_m))=R_m*U(H_m/H_n)((H/H_m))\ar@{^{(}->}[d]\\
R_n*U(H/H_n) \ar@{^{(}->}[r] &
R_n((H/H_n))=R_n((H_m/H_n))((H/H_m))}
\end{equation}

It allows us to define $$R((H))=\varinjlim_m R_m((H/H_m)).$$

For each $n>m\geq 1$, let $\ell_m\colon R_m((H/H_m))\rightarrow R_m$
be the least element map, and let $t_m\colon R_m\rightarrow R_{m+1}$
be the least element map of $R_{m+1}((H_m/H_{m+1}))$ restricted to
$R_m$ (or equivalently, the restriction of $\ell_{m+1}$ to $R_m$ by
Lemma~\ref{lem:propertiesleastelementmap}(iii)). The commutativity
of the diagram
\begin{equation}\label{eq:commutativediagramseries2}
 \xymatrix{ R_m((H/H_m))\ar[r]^{\ell_m} \ar@{^{(}->}[d]  &
R_m\ar[d]^{t_m}\\
R_{m+1}((H/H_{m+1}))\ar[r]^{\qquad \ell_{m+1}} &
R_{m+1}}
\end{equation}
follows from \eqref{eq:commutativediagramseries1}.

Note that, because of Lemma~\ref{lem:propertiesleastelementmap}(i),
each $t_m$ is the identity on $R_{m+1}\subseteq R_m$, and hence on
$R$.

We claim that if $f\in R*U(L)$, there exists $m\geq  1$ such that
$\ell_m(f)\in R$. Indeed, we may express $f=\sum_{i=1}^n a_im_i$
where each $a_i\in R$ and each $m_i$ is a standard monomial in the
set $\mathcal{C}$. Thus there exists $m\geq 1$ such that $f$ is an
$R$-linear combination of the standard monomials in
$\bigcup_{i=1}^{m-1}\mathcal{C}_i$. Now, by definition of
$\ell_m\colon R_m((H/H_m))\rightarrow R_m$, it follows that
$\ell_m(f)\in R$, and the claim is proved.

Let now $f\in R((H))$. There exists $m\geq 1$ such that $f\in
R_m((H/H_m))$. By the claim and the commutativity of
\eqref{eq:commutativediagramseries2}, there exists $m_0$ such that
$\ell_{m_0}(f)\in R$. The commutativity of
\eqref{eq:commutativediagramseries2} and the fact that $\ell_l$  is
the identity on $R$ for each $l$   implies that
$\ell_n(f)=\ell_{m_0}(f)$ for all $n\geq m_0$. Thus we have a well
defined map $\ell\colon R((H))\rightarrow R$ where for each $f\in
R((H))$, $\ell(f)=\ell_{m_0}(f)$ where $m_0$ is any natural such
that $\ell_{m_0}(f)\in R$. The map $\ell\colon R((H))\rightarrow R$
is called the \emph{least element map} of $R((H))$, and $\ell(f)$
the \emph{least element} of $f\in R((H))$.

\begin{lem}\label{lem:propertiesleastelementmapresidually}
The least element map $\ell\colon R((H))\rightarrow R$ satisfies the
properties (i)-(v) in Lemma~\ref{lem:propertiesleastelementmap}.
\end{lem}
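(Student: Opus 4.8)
The plan is to obtain each of the five assertions for $\ell$ from the corresponding assertion in Lemma~\ref{lem:propertiesleastelementmap} for the finite-level maps $\ell_m\colon R_m((H/H_m))\rightarrow R_m$, which is legitimate because each $H/H_m$ is nilpotent, hence hypercentral. The bridge is the stabilization established just before the statement: for every $f\in R((H))$ there is an $m_0$ with $\ell_{m_0}(f)\in R$, and then $\ell(f)=\ell_n(f)$ for all $n\ge m_0$; moreover $R\subseteq R_m$ for every $m$. Hence, given finitely many elements at once, one may pass to a single level $m$ so large that all the relevant values $\ell_m(\,\cdot\,)$ already lie in $R$ and agree with the corresponding values of $\ell$. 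Properties (i) and (ii) are then immediate: for (i), if $f\in R$ then $f\in R\subseteq R_m\subseteq R_m((H/H_m))$, so Lemma~\ref{lem:propertiesleastelementmap}(i) gives $\ell_m(f)=f\in R$, whence $\ell(f)=f$, while the reverse implication holds simply because $\ell$ takes values in $R$; for (ii), choosing $m_0$ with $\ell(f)=\ell_{m_0}(f)$, Lemma~\ref{lem:propertiesleastelementmap}(ii) for $\ell_{m_0}$ gives $\ell(f)=0$ if and only if $f=0$.

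For (iv) and (v) I first note that, by Lemma~\ref{lem:graded ring}, $\gr(R_m)=\gr(R*U(H_m))$ is a polynomial ring over $R$ and therefore a domain whenever $R$ is, so each $R_m$ is a domain in that case. For (iv), given $f,g$ I pick $m$ large enough that $\ell(f)=\ell_m(f)$, $\ell(g)=\ell_m(g)$ and $\ell(fg)=\ell_m(fg)$ simultaneously, and apply Lemma~\ref{lem:propertiesleastelementmap}(iv) over the domain $R_m$ to get $\ell(fg)=\ell_m(f)\ell_m(g)=\ell(f)\ell(g)$. For (v), if $\ell(f)$ is invertible in $R$ it is a fortiori invertible in $R_m\supseteq R$, so Lemma~\ref{lem:propertiesleastelementmap}(v) for $\ell_m$ makes $f$ invertible in $R_m((H/H_m))\subseteq R((H))$; when $R$ is a domain the converse follows formally from (iv) and (i), since $\ell(f)\ell(f^{-1})=\ell(ff^{-1})=\ell(1)=1=\ell(f^{-1})\ell(f)$.

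The substantial point is (iii), and I expect it to be the main obstacle. Given a subalgebra $L'$ of $H$ with basis $\mathcal{B}'\subseteq\mathcal{C}$ and inherited order, I would first install on $L'$ the induced data $L'_i=L'\cap H_i$ and $\mathcal{C}'_i=\mathcal{C}_i\cap\mathcal{B}'$, checking that this is a $Q$-basis with canonical ordering inherited from $\mathcal{C}$, so that $R((L'))=\varinjlim_m\,(R*U(L'_m))((L'/L'_m))$ embeds compatibly into $R((H))$. Fixing $f\in R((L'))$ and a level $m$ at which both least elements have stabilized into $R$, the real task is to identify the least element of $f$ computed over the coefficient ring $R*U(L'_m)$, say $\ell'_m(f)$, with $\ell_m(f)$, which is computed over the larger ring $R*U(H_m)$. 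I would split this into two observations: first, since $L'/L'_m\hookrightarrow H/H_m$ is an inclusion of nilpotent Lie algebras with compatible ordered bases, Lemma~\ref{lem:propertiesleastelementmap}(iii) (the hypercentral case, applied over the base $R*U(H_m)$) identifies $\ell_m(f)$ with the least element of $f$ computed in $(R*U(H_m))((L'/L'_m))$; second, the least-element extraction is intrinsic to the successive series expansions in the basis variables of $L'/L'_m$, so, since the coefficients of $f$ already lie in $R*U(L'_m)$, enlarging the coefficient ring from $R*U(L'_m)$ to $R*U(H_m)$ leaves the extracted least element unchanged.

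Combining the two observations with the stabilization then yields $\ell(f)=\ell_m(f)=\ell'_m(f)$, which is precisely the least element of $f$ in $R((L'))$, as required. The delicate part is making the second observation rigorous, namely verifying that the limit construction of the ring of series is functorial in the coefficient ring with respect to the least-element map; I would do this by the same transfinite induction on the ordinal $\beta$ used to define $\ell$ at the hypercentral stage, tracking that at each successor step the coefficient of the least power of $z_\gamma$ is computed by a formula that does not involve the ambient coefficient ring.
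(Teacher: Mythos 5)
Your proof is correct and follows essentially the same route as the paper: each property is reduced to the hypercentral-case Lemma~\ref{lem:propertiesleastelementmap} applied at a level $m$ where the relevant least elements have stabilized into $R$ (using that each $R_m=R*U(H_m)$ is a domain when $R$ is), with (iii) handled via the induced RN-series $H'_m=H'\cap H_m$ and Q-basis $\mathcal{C}_i\cap\mathcal{B}'$. The only difference is that you spell out, and correctly flag as the delicate point, the invariance of the least-element extraction under enlargement of the coefficient ring from $R*U(H'_m)$ to $R*U(H_m)$ -- a step the paper's one-line argument for (iii) passes over in silence -- and your proposed transfinite induction on $\beta$ is the right way to justify it.
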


\begin{proof}
(i) and (ii) are clear from the construction.

(iii) Define $H_{m}'=H_m\cap H'$ and $R_{m}'=R*U(H_{m}')$. Then
$R((H'))=\varinjlim_m R_{m}'((H'/H_{m}'))$ and the result follows
from Lemma~\ref{lem:propertiesleastelementmap}(iii).

(iv) Let $f,g\in R((H))$. There exists $m\geq 1$ such that $f,g\in
R_m((H/H_m))$ and $\ell_m(f), \ell_m(g)\in R$. Now apply
Lemma~\ref{lem:propertiesleastelementmap}(iv).

(v) if $\ell(f)$ is invertible, then $\ell_m(f)$ is invertible for
some $m$ such that $f\in R_m((H/H_m))$. By
Lemma~\ref{lem:propertiesleastelementmap}(v), $f$ is invertible in
$R_m((H/H_m))$, and therefore in $R((H))$.  If $R$ is a domain, then
so is $R_m$ for all $m$. Now apply
Lemma~\ref{lem:propertiesleastelementmap}(v).
\end{proof}

From all this, we obtain the extension of \cite[Theorem~2]{Lichtmanuniversalfields}
to crossed products $K*U(H)$. More precisely, it follows from
Lemma~\ref{lem:propertiesleastelementmapresidually}(v),

\begin{coro}
Let $H$ be a residually nilpotent Lie $k$-algebra $H$ with an
RN-series $\{H_i\}_{i=1}^\infty$ that has a Q-basis
$\mathcal{C}=\bigcup_{i=1}^\infty\mathcal{C}_i$. Let $K$ be a field
with $k$ as a central subfield. For any crossed product $K*U(H)$ and
canonical ordering, the ring of series $K((H))$ is a field that
contains $K*U(H)$. \qed
\end{coro}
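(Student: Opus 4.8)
The plan is to read off the statement directly from the properties of the least element map assembled in Lemma~\ref{lem:propertiesleastelementmapresidually}, together with the construction of $K((H))$ as the direct limit $\varinjlim_m K_m((H/H_m))$, where $K_m=K*U(H_m)$. All the genuine work has already been done in building that map and establishing the compatibility diagrams, so the proof here is essentially a two-step verification: first that $K*U(H)$ sits inside $K((H))$, and then that $K((H))$ is a field.

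First I would record the embedding $K*U(H)\hookrightarrow K((H))$. Since each $H_m$ is an ideal of $H$, Lemma~\ref{lem:importantpropertycrossedproduct} gives the identification $K*U(H)=(K*U(H_m))*U(H/H_m)=K_m*U(H/H_m)$, and the hypercentral construction supplies the embedding $K_m*U(H/H_m)\hookrightarrow K_m((H/H_m))$ for every $m\ge 1$. The commutativity of diagram~\eqref{eq:commutativediagramseries1} shows that these embeddings are compatible with the transition maps used to form the direct limit, so they induce a single embedding $K*U(H)\hookrightarrow \varinjlim_m K_m((H/H_m))=K((H))$. Hence $K((H))$ is a nonzero ring containing $K*U(H)$ (in particular it contains $K$, so $K((H))\ne 0$).

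Next I would show that every nonzero element of $K((H))$ is invertible. Let $f\in K((H))\setminus\{0\}$. By Lemma~\ref{lem:propertiesleastelementmapresidually}(ii) its least element $\ell(f)$ is nonzero, and by construction $\ell(f)\in K$. Since $K$ is a field, $\ell(f)$ is invertible in $K$, and then Lemma~\ref{lem:propertiesleastelementmapresidually}(v) forces $f$ to be invertible in $K((H))$. As $f$ was an arbitrary nonzero element, $K((H))$ is a field, which together with the previous paragraph proves the corollary.

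I do not expect a real obstacle at this level: the delicate points lie entirely upstream, in the proof of Lemma~\ref{lem:propertiesleastelementmapresidually}, which itself reduces—through the finite stages $K_m((H/H_m))$ and the compatibilities \eqref{eq:commutativediagramseries1} and \eqref{eq:commutativediagramseries2}—to the hypercentral case handled in Lemma~\ref{lem:propertiesleastelementmap}. The only place deserving a moment of care is confirming that the stagewise embeddings really descend to an embedding on the direct limit, and this is exactly what the commutativity in \eqref{eq:commutativediagramseries1} guarantees; everything else is a formal application of parts (ii) and (v) of the least element map.
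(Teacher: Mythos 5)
Your proposal is correct and follows exactly the route the paper intends: the containment $K*U(H)\hookrightarrow K((H))$ is built into the direct-limit construction via the compatibility diagram \eqref{eq:commutativediagramseries1}, and invertibility of every nonzero element is read off from parts (ii) and (v) of Lemma~\ref{lem:propertiesleastelementmapresidually}, since the least element of a nonzero series is a nonzero element of the field $K$. The paper states the corollary with no more justification than ``it follows from Lemma~\ref{lem:propertiesleastelementmapresidually}(v)'', so your write-up is simply a slightly more explicit version of the same argument.
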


The subfield of $K((H))$ generated by $K*U(H)$ will be denoted by
$K(H)$.

\subsection{Main results}

The next result gives a condition that ensures when two fields of
fractions of a crossed product are isomorphic. It is the
generalization of \cite[Section~6,
Corollary]{Lichtmanuniversalfields} to crossed products. Although
weaker, it should be seen as a similar result to
\cite[Theorem]{Hughes}.

\begin{theo}\label{theo:Hughestheoremforliealgebras}
 Let $H$ be a
residually nilpotent Lie $k$-algebra with an RN-series
$\{H_i\}_{i=1}^\infty$ that has a Q-basis
$\mathcal{C}=\bigcup_{i=1}^\infty \mathcal{C}_i$. Let $K$ be a field
with $k$ as a central subfield. Consider a crossed product $K*U(H)$
and suppose that it has a field of fractions $K*U(H)\hookrightarrow
D$. For each $m\geq 1$, denote by $D_m$ the subfield of $D$
generated by $K*U(H_m)$. Assume that, for each $m\geq 1$, the
standard monomials in  $\bigcup_{i=1}^{m-1} \mathcal{C}_i$ are
linearly independent over $D_m$. Then $K(H)$ and $D$ are isomorphic
fields of fractions of $K*U(H)$.
\end{theo}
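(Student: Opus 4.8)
The plan is to show that $D$ and $K(H)$ have the same associated prime matrix ideal over $R=K*U(H)$, and then to invoke the characterization of specializations in terms of prime matrix ideals recalled in Section~\ref{localitzacio} to conclude that they are isomorphic as fields of fractions of $R$. Since $K(H)$ is by definition the subfield of $K((H))$ generated by $R$, it is the rational closure of $R$ in $K((H))$, so its prime matrix ideal $\mathcal{P}_{K(H)}$ consists exactly of those square matrices over $R$ that fail to be invertible in $K((H))$. Thus the whole problem reduces to proving that a square matrix over $R$ is invertible over $D$ if and only if it is invertible over $K((H))$; equivalently, that there exist $R$-specializations in both directions between $D$ and $K(H)$.

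The engine is the filtration $R=R_m*U(H/H_m)$, where $R_m=K*U(H_m)$ and, since $[H,H_i]\subseteq H_{i+1}$, each quotient $H/H_m$ is finite-dimensional nilpotent, hence hypercentral. Let $D_m$ and $\mathcal{K}_m$ denote the subfields of $D$ and of $K((H))$ generated by $R_m$. The hypothesis of the theorem is precisely the Hughes-freeness condition that the standard monomials in $\bigcup_{i<m}\mathcal{C}_i$ be linearly independent over $D_m$; the same independence holds over $\mathcal{K}_m$ for free, as it is built into the power series structure $K((H))=R_m((H/H_m))$. Using Corollary~\ref{coro:fieldoffractionsnilpotentlie} (so that $D_m*U(H/H_m)$ and $\mathcal{K}_m*U(H/H_m)$ are Ore domains) together with the uniqueness of the Ore field of fractions, I would identify $D$ with the Ore field of fractions of $D_m*U(H/H_m)$ and $K(H)$ with that of $\mathcal{K}_m*U(H/H_m)$. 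Consequently, if the restricted fields $D_m$ and $\mathcal{K}_m$ are isomorphic as fields of fractions of $R_m$, compatibly with the actions and twistings, then Proposition~\ref{prop:generalizationProp3.1Lichtman} and Lemma~\ref{lem:subringsofspecializations} upgrade this to an $R$-isomorphism $D\cong K(H)$.

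Since $H_m$ is again residually nilpotent with the inherited $Q$-basis $\bigcup_{i\geq m}\mathcal{C}_i$, and $D_m$, $\mathcal{K}_m$ satisfy the same Hughes-free hypothesis with respect to it, the previous paragraph reduces the statement for $H$ to the statement for each $H_m$. This recursion is self-similar and does not terminate on its own; the device that closes it is the least element map $\ell\colon K((H))\to K$ of Lemma~\ref{lem:propertiesleastelementmapresidually}, which is multiplicative and detects invertibility. I would use it to produce the specialization directly: every element of $R$, hence every entry of a given matrix, is supported on finitely many basis elements, so it lies in $R_m((H/H_m))$ for some finite $m$, and $\ell$ reads off its leading coefficient in $K$ after finitely many steps. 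The hypothesis $\bigcap_m H_m=0$ guarantees that this leading-coefficient data stabilizes at a finite level, which provides the base of the induction and turns the tower of compatible specializations into a direct limit whose colimit is the desired isomorphism.

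The main obstacle I anticipate is exactly this closing step: controlling how the least element map interacts with matrix inversion and with the Ore passage at each level, so as to guarantee that the specialization built level by level is injective, i.e.\ that no square matrix over $R$ singular over $K((H))$ becomes invertible over $D$, and conversely. This is the valuation-theoretic heart of the argument, in the spirit of Hughes' theorem and its adaptation by Lichtman; the care lies in checking that leading terms are never created or destroyed under the isomorphisms $D_m\cong\mathcal{K}_m$, and in verifying the compatibility of these isomorphisms across the direct system $\{R_m*U(H/H_m)\}_m$ so that they glue into a single $R$-isomorphism $D\cong K(H)$.
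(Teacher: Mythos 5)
Your first half is on track and matches the paper: extending the derivations to $D_m$ (via Lemma~\ref{lem:uniqueextensionderivation}), using the linear-independence hypothesis to recognize the subring of $D$ generated by $D_m$ and the monomials as a crossed product $D_m*U(H/H_m)$, and identifying $D$ as the Ore field of fractions of this Ore domain. But from there the plan has a genuine gap, which you yourself flag. Reducing ``$D\cong K(H)$ over $K*U(H)$'' to ``$D_m\cong\mathcal{K}_m$ over $K*U(H_m)$'' is an infinite regress, since $H_m$ is again an infinite-dimensional residually nilpotent algebra of exactly the same type; and the device you propose for closing it --- the least element map $\ell$ together with a matrix-by-matrix comparison of invertibility --- is not carried out and does not obviously work, because $\ell$ is defined on $K((H))$ and you give no mechanism for transporting its valuation-like data to $D$, nor for controlling how leading terms behave under matrix inversion and the Ore passage. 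The claim that ``$\bigcap_m H_m=0$ guarantees that this leading-coefficient data stabilizes at a finite level'' is an assertion, not an argument, and it is precisely the point at which the proof would have to be done.

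The idea you are missing is that no induction or specialization is needed at all. Once $D$ is the Ore field of fractions of $D_m*U(H/H_m)$, Corollary~\ref{coro:fieldoffractionsnilpotentlie} (applied to the nilpotent, hence hypercentral, algebra $H/H_m$) gives an embedding $D\hookrightarrow D_m((H/H_m))$, and these embeddings are compatible as $m$ varies via the analogue of diagram \eqref{eq:commutativediagramseries1}; likewise $R_m((H/H_m))\hookrightarrow D_m((H/H_m))$, so $K((H))=\varinjlim R_m((H/H_m))$ embeds in $\varinjlim D_m((H/H_m))$. Thus $D$ and $K((H))$ both sit inside the single field $\varinjlim D_m((H/H_m))$, compatibly over $K*U(H)$, and $K(H)$ and $D$ are each the subfield rationally generated by $K*U(H)$ in that common field --- hence they are literally the same subfield. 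This one observation replaces your entire prime-matrix-ideal and least-element-map machinery, and is where the hypothesis on the standard monomials (needed to form $D_m*U(H/H_m)$ inside $D$ in the first place) actually pays off.
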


\begin{proof}
For each $m\geq 1$, set $R_m=K*U(H_m)$ and consider $K*U(H)$ as
$R_m*U(H/H_m)$. Fix $x\in H/H_m$, the derivation $\delta_x$ of $R_m$
extends to $D$ as the inner derivation $\delta
_x(d)=\bar{x}d-d\bar{x}$;  since $\delta _x(R_m)\subseteq R_m$ and
$D_m$ is a field of fractions of $R_m$, we deduce from
Lemma~\ref{lem:uniqueextensionderivation} that $\delta
_x(D_m)\subseteq D_m$. Therefore, the subring of $D$ generated by
$D_m$ and $\bigcup_{i=1}^{m-1} \mathcal{C}_i$ is a crossed product
$D_m*U(H/H_m)$ because conditions (i) and (ii) in the definition of
crossed product are easily verified.

For each $m\geq 1$, we can consider $D_m*U(H/H_m)\hookrightarrow
D_m((H/H_m))$ because $H/H_m$ is a nilpotent Lie $k$-algebra. Since
$R_m*U(H/H_m)\hookrightarrow D_m*U(H/H_m)\hookrightarrow D$,   $D$
is a field of fractions of $R_m*U(H/H_m)=K*U(H)$ and $D_m*U(H/H_m)$
is an Ore domain, we get that $D_m*U(H/H_m)\hookrightarrow D$ is the
Ore field of fractions of $D_m*U(H/H_m)$ and $D\hookrightarrow
D_m((H/H_m))$ by Corollary~\ref{coro:fieldoffractionsnilpotentlie}.
Note also that $R_m((H/H_m))\hookrightarrow D_m((H/H_m))$. Hence we
have $K*U(H)\hookrightarrow R_m((H/H_m))\hookrightarrow
D_m((H/H_m))$ and,  by the universal property of the direct limit,
$K*U(H)\hookrightarrow K((H))\hookrightarrow \varinjlim
D_m((H/H_m))$. This implies that the field of fractions of $K*U(H)$
in $K((H))$, which is $K(H)$, and in $\varinjlim D_m((H/H_m))$ must
coincide. Now note that $D$  is the field of fractions of $K*U(H)$
in $\varinjlim D_m((H/H_m))$ because $D$ is the field of fractions
of $K*U(H)$ in $D_m((H/H_m))$ for each $m$. Therefore $K(H)=D$, as
desired.
\end{proof}

By \cite{Hughes}, it is known that if $G$ is an orderable group, $K$
a field and $KG$ a crossed product, then the field of fractions
$K(G)$ inside the Malcev-Neumann series ring $K((G))$ does not
depend on the ordering of $G$. The following theorem should be seen
as an analogous result.

\begin{theo}\label{theo:k(H)doesnotdependontheorder}
The field $K(H)$ does not depend on the RN-series with a Q-basis
chosen, nor on the Q-basis $\mathcal{C}$ chosen nor on the canonical
ordering of $\mathcal{C}_i$ chosen. In fact $K*U(H)\hookrightarrow
K(H)$ and $K*U(H)\hookrightarrow \mathfrak{D}(K*U(H))$ (cf. Proposition~\ref{prop:canonicalfieldoffractions}) are
isomorphic fields of fractions.
\end{theo}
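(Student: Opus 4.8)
The plan is to derive both assertions from Theorem~\ref{theo:Hughestheoremforliealgebras}, taking the field of fractions $D$ there to be the canonical field of fractions $\mathfrak{D}(K*U(H))$ produced by Proposition~\ref{prop:canonicalfieldoffractions}. The key observation is that $\mathfrak{D}(K*U(H))$ is built from the crossed product $K*U(H)$ alone, with no reference to any RN-series, Q-basis, or canonical ordering. Hence it suffices to prove the ``in fact'' part: that for an arbitrarily fixed RN-series $\{H_i\}_{i\ge1}$ with Q-basis $\mathcal{C}=\bigcup_{i\ge1}\mathcal{C}_i$ and canonical ordering, the resulting field $K(H)$ is isomorphic, as a field of fractions of $K*U(H)$, to $\mathfrak{D}(K*U(H))$. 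Once this is known, any two choices give fields of fractions both isomorphic to the same fixed field $\mathfrak{D}(K*U(H))$, and therefore to each other, which yields all the independence statements simultaneously.

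Fix such data and set $D=\mathfrak{D}(K*U(H))$, writing $D_m$ for the subfield of $D$ generated by $K*U(H_m)$. The whole argument reduces to checking the single hypothesis of Theorem~\ref{theo:Hughestheoremforliealgebras}: that for each $m\ge1$ the standard monomials in $\bigcup_{i=1}^{m-1}\mathcal{C}_i$ are linearly independent over $D_m$. First I would identify $D_m$ with $\mathfrak{D}(K*U(H_m))$. Since each $H_m$ is an ideal of $H$, Proposition~\ref{prop:canonicalfieldoffractions}(i) provides a commutative square embedding $\mathfrak{D}(K*U(H_m))$ into $D=\mathfrak{D}(K*U(H))$ as a field of fractions of $K*U(H_m)$; as $\mathfrak{D}(K*U(H_m))$ is rationally generated by $K*U(H_m)$, its image is exactly the subfield $D_m$, so $D_m\cong\mathfrak{D}(K*U(H_m))$.

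Next I would invoke Proposition~\ref{prop:canonicalfieldoffractions}(ii) with $L=H$, $N=H_m$, the basis $\mathcal{B}_N=\bigcup_{i\ge m}\mathcal{C}_i$ of $H_m$, and the complementary family $\bigcup_{i=1}^{m-1}\mathcal{C}_i\subseteq H\setminus H_m$ (whose union with $\mathcal{B}_N$ is the basis $\mathcal{C}$ of $H$). This gives precisely that the standard monomials in $\bigcup_{i=1}^{m-1}\mathcal{C}_i$ are linearly independent over $\mathfrak{D}(K*U(H_m))=D_m$, which is the required hypothesis. Theorem~\ref{theo:Hughestheoremforliealgebras} then concludes that $K(H)$ and $\mathfrak{D}(K*U(H))$ are isomorphic fields of fractions of $K*U(H)$, completing the proof.

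I expect the only delicate point to be the identification $D_m\cong\mathfrak{D}(K*U(H_m))$ --- namely that the subfield of $D$ rationally generated by $K*U(H_m)$ really is the canonical field of fractions of $K*U(H_m)$ and not something smaller. This rests on the commutativity in Proposition~\ref{prop:canonicalfieldoffractions}(i) together with the fact that $\mathfrak{D}(K*U(H_m))$ is an epic, hence rationally generated, $K*U(H_m)$-field; everything else is a direct application of the two cited results.
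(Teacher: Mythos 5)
Your proposal is correct and follows essentially the same route as the paper: both reduce everything to the choice-independence of $\mathfrak{D}(K*U(H))$ and then verify the hypothesis of Theorem~\ref{theo:Hughestheoremforliealgebras} with $D=\mathfrak{D}(K*U(H))$ by invoking Proposition~\ref{prop:canonicalfieldoffractions}(ii) for the linear independence of standard monomials over $\mathfrak{D}(K*U(H_m))$. The only difference is that you spell out the identification $D_m\cong\mathfrak{D}(K*U(H_m))$ via Proposition~\ref{prop:canonicalfieldoffractions}(i), which the paper leaves implicit.
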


\begin{proof}
First note that the construction of $\mathfrak{D}(K*U(H))$ does not
depend on the RN-series with a Q-basis chosen, nor on the Q-basis
$\mathcal{C}$ nor on the canonical ordering of $\mathcal{C}$, see
\cite[Theorem~2.6.5]{Cohnskew} or \cite{Lichtmanvaluationmethods}.

Let $\{H_i\}_{i=1}^\infty$ be an RN-series with a Q-basis
$\mathcal{C}=\bigcup_{i=1}^\infty \mathcal{C}_i$ and set a canonical
ordering of $\mathcal{C}$.

For each $m\geq 1$, $\bigcup_{i=1}^{m-1}\mathcal{C}_i$ is a set of
elements in $H$ which give a basis of  $H/H_m$. By
Proposition~\ref{prop:canonicalfieldoffractions}(ii), the standard
monomials in $\bigcup_{i=1}^{m-1}\mathcal{C}_i$ are linearly
independent over $\mathfrak{D}(K*U(H_m))$. Hence
$K*U(H)\hookrightarrow K(H)$ and $K*U(H)\hookrightarrow
\mathfrak{D}(K*U(H))$ are isomorphic fields of fractions of $K*U(H)$
by Theorem~\ref{theo:Hughestheoremforliealgebras}.
\end{proof}

The next result should be seen as a weaker version of
\cite[Theorem]{Hughes2}, along the lines of
\cite[Proposition~2.5(3)(ii)]{SanchezfreegroupalgebraMNseries}.

\begin{coro}\label{coro:k(H)ascrossedproduct}
Let $H$ be a  Lie $k$-algebra. Let $K$ be a field containing $k$ as a
central subfield. Consider a crossed product $K*U(H)$.

Suppose that $N$ is an ideal of $H$ such that both $N$ and $H/N$ are
residually nilpotent and they both have RN-series with Q-basis. Then
the natural embedding $K*U(H)\hookrightarrow K(N)(\frac{H}{N})$
gives a field of fractions of $K*U(H)$ isomorphic to
$K*U(H)\hookrightarrow \mathfrak{D}(K*U(H))$.

Moreover, if $H$ is residually nilpotent with an RN-series that has
a Q-basis, then $K*U(H)\hookrightarrow K(H)$ and
$K*U(H)\hookrightarrow K(N)(\frac{H}{N})$ are isomorphic fields of
fractions.
\end{coro}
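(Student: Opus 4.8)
The plan is to apply Theorem~\ref{theo:Hughestheoremforliealgebras} with base field $K(N)$ and Lie algebra $H/N$, taking $D:=\mathfrak{D}(K*U(H))$ as the field of fractions to be compared with $K(N)(\frac{H}{N})$. First I set $R=K*U(N)$, so that $K*U(H)=R*U(H/N)$. Since $N$ is residually nilpotent with an RN-series having a Q-basis, Theorem~\ref{theo:k(H)doesnotdependontheorder} gives an $R$-isomorphism $\mathfrak{D}(K*U(N))\cong K(N)$. By Proposition~\ref{prop:canonicalfieldoffractions}(i) the subfield of $D$ generated by $R$ is exactly $\mathfrak{D}(K*U(N))$, and by Proposition~\ref{prop:canonicalfieldoffractions}(iii) the subring of $D$ generated by $K*U(H)$ and this subfield is a crossed product $\mathfrak{D}(K*U(N))*U(H/N)$ extending $R*U(H/N)$. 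Transporting along $\mathfrak{D}(K*U(N))\cong K(N)$ (which is the identity on $R$, hence on the elements $\bar x$, $x\in H/N$, and which respects the action and twisting, since the twisting takes values in $R$ and the derivations $\delta_x$ extend from $R$ to $K(N)$ uniquely, cf. Lemmas~\ref{lem:extensionofderivations} and \ref{lem:uniqueextensionderivation}) identifies this subring with the crossed product $K(N)*U(H/N)$ used to build $K(N)(\frac{H}{N})$. In particular $D$ is a field of fractions of $K(N)*U(H/N)$, and $K(N)$ contains $k$ as a central subfield, so Theorem~\ref{theo:Hughestheoremforliealgebras} is applicable once its independence hypothesis is checked.

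It then remains to verify that hypothesis, which is the heart of the argument. Writing $\{M_m\}_{m\ge 1}$ for the chosen RN-series of $H/N$ with Q-basis $\mathcal{C}=\bigcup_i\mathcal{C}_i$, I must show that for each $m\ge 1$ the standard monomials in $\bigcup_{i=1}^{m-1}\mathcal{C}_i$ are linearly independent over the subfield $D_m$ of $D$ generated by $K(N)*U(M_m)$. Let $\pi\colon H\to H/N$ be the quotient and put $\widehat{H}_m=\pi^{-1}(M_m)$, an ideal of $H$ containing $N$ with $\widehat{H}_m/N=M_m$. Since $K(N)$ is rationally generated by $K*U(N)\subseteq K*U(\widehat{H}_m)$ and the generators $\bar x$ of $U(M_m)$ lie in $K*U(\widehat{H}_m)$, the subfield $D_m$ coincides with the subfield of $D$ generated by $K*U(\widehat{H}_m)$, which by Proposition~\ref{prop:canonicalfieldoffractions}(i) is $\mathfrak{D}(K*U(\widehat{H}_m))$.

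Now $\bigcup_{i=1}^{m-1}\mathcal{C}_i$ is a set of elements of $H/N$ forming a basis of $(H/N)/M_m\cong H/\widehat{H}_m$; lifting these to elements $\mathcal{C}'\subseteq H$ produces a basis $\mathcal{B}_{\widehat{H}_m}\cup\mathcal{C}'$ of $H$, with $\mathcal{B}_{\widehat{H}_m}$ a basis of $\widehat{H}_m$. Under the identification of $\bar x$ ($x\in H/N$) with $\bar{\tilde x}$ (lift $\tilde x\in H$), the standard monomials in $\bigcup_{i=1}^{m-1}\mathcal{C}_i$ are precisely the standard monomials in $\mathcal{C}'$, which are linearly independent over $\mathfrak{D}(K*U(\widehat{H}_m))=D_m$ by Proposition~\ref{prop:canonicalfieldoffractions}(ii). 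Hence the hypothesis of Theorem~\ref{theo:Hughestheoremforliealgebras} holds, and the theorem yields an isomorphism of $K(N)*U(H/N)$-fields---and so of $K*U(H)$-fields---between $K(N)(\frac{H}{N})$ and $D=\mathfrak{D}(K*U(H))$. The main obstacle is exactly this identification $D_m=\mathfrak{D}(K*U(\widehat{H}_m))$ together with the matching of standard monomials, which is what lets Proposition~\ref{prop:canonicalfieldoffractions}(ii) supply the required independence.

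Finally, for the last assertion, if $H$ is itself residually nilpotent with an RN-series having a Q-basis, then Theorem~\ref{theo:k(H)doesnotdependontheorder} gives that $K*U(H)\hookrightarrow K(H)$ and $K*U(H)\hookrightarrow\mathfrak{D}(K*U(H))$ are isomorphic fields of fractions. Combining this with the first part shows that $K*U(H)\hookrightarrow K(H)$ and $K*U(H)\hookrightarrow K(N)(\frac{H}{N})$ are isomorphic fields of fractions, as claimed.
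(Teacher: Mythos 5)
Your proof is correct and follows essentially the same route as the paper: both rest on Proposition~\ref{prop:canonicalfieldoffractions}(i)--(iii), Theorem~\ref{theo:Hughestheoremforliealgebras} applied to the crossed product of a field of fractions of $K*U(N)$ by $U(H/N)$ inside $\mathfrak{D}(K*U(H))$, and Theorem~\ref{theo:k(H)doesnotdependontheorder} to identify $K(N)$ with $\mathfrak{D}(K*U(N))$. The only differences are cosmetic --- you transport to $K(N)*U(H/N)$ before invoking the Hughes-type theorem rather than after, and you spell out the identification $D_m=\mathfrak{D}(K*U(\widehat{H}_m))$ needed for the independence hypothesis, which the paper leaves implicit.
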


\begin{proof}
 By
Proposition~\ref{prop:canonicalfieldoffractions}(iii), we have
$\mathfrak{D}(K*U(N))*U(H/N)\subseteq \mathfrak{D}(K*U(H))$. Now
Theorem~\ref{theo:Hughestheoremforliealgebras} and again
Proposition~\ref{prop:canonicalfieldoffractions}, imply that
$\mathfrak{D}\big(\mathfrak{D}(K*U(N))*U(H/N)\big)\cong\mathfrak{D}(K*U(H))$.
By Theorem~\ref{theo:k(H)doesnotdependontheorder}, $K(N)$ and
$\mathfrak{D}(K*U(N))$ are isomorphic fields of fractions of
$K*U(N)$. Hence $\mathfrak{D}(K(N)*U(\frac{H}{N}))\cong
\mathfrak{D}(K*U(H))$. Again by
Theorem~\ref{theo:k(H)doesnotdependontheorder},
$K(N)(\frac{H}{N})\cong \mathfrak{D}(K*U(H))$ as fields of fractions
of $K*U(H)$.

If $K(H)$ exists, then
Theorem~\ref{theo:k(H)doesnotdependontheorder} implies that
$K(H)\cong \mathfrak{D}(K*U(H))\cong K(N)(\frac{H}{N})$.
\end{proof}

We showed in Lemma~\ref{lem:crossedproductfir} that $K*U(H)$, the
crossed product of a field $K$ by $U(H)$ where $H$ is a free Lie
$k$-algebra, is a fir. Thus it has a universal field of fractions.
We are going to prove that $K*U(H)\hookrightarrow K(H)$ and
$K*U(H)\hookrightarrow \mathfrak{D}(K*U(H))$ are both the universal
field of fractions. This result was already known for $U(H)$
\cite[Theorem~1]{Lichtmanuniversalfields}, where  the proof relies on the existence of some specialization (see
\cite[Lemma~3.1]{Lichtmanuniversalfields}). The techniques for the
construction of such specialization do not work for crossed
products. In our proof, the role of
\cite[Lemma~3.1]{Lichtmanuniversalfields} is played by
Proposition~\ref{prop:generalizationProp3.1Lichtman}.

\medskip

\begin{rem} \label{rem:qbasisfree} Let $H$ be a free Lie $k$-algebra. Then $H$ is  graded. Indeed
$H=\bigoplus_{i\geq 1}N_i$ where each $N_i$ is the subspace
generated by the Lie monomials of degree $i$. Then
$H_i=\bigoplus_{j\geq i}N_j$ is the $i$-th term of the lower central
series of $H$. Let $\mathcal{C}_i$ be a basis of $N_i$ for $i\geq
1.$  Therefore we are in the situation of Examples~\ref{ex:qbasis} and we can deduce that $\bigcup _{i\ge 1}\mathcal{C}_i$ is a $Q$-basis of the residually nilpotent algebra $H$. \end{rem}

\begin{theo}\label{theo:LewintheoremforLiealgebras}
Let $H$ be a free Lie $k$-algebra, $K$ a field with $k$ as a central
subfield and consider $K*U(H)$. Then $K*U(H)\hookrightarrow K(H)$
and $K*U(H)\hookrightarrow \mathfrak{D}(K*U(H))$ coincide with the
universal field of fractions of $K*U(H)$.
\end{theo}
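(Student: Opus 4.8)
The plan is to prove that the universal field of fractions of $K*U(H)$ satisfies the hypotheses of Theorem~\ref{theo:Hughestheoremforliealgebras}, and then to combine that theorem with Theorem~\ref{theo:k(H)doesnotdependontheorder}. First I would record the structural facts. Since $H$ is free, $K*U(H)$ is a fir by Lemma~\ref{lem:crossedproductfir}, so it has a universal field of fractions $K*U(H)\hookrightarrow E$, obtained by localizing at the minimal prime matrix ideal of non-full matrices (cf.\ \S\ref{localitzacio}). By Theorem~\ref{theo:k(H)doesnotdependontheorder} the fields of fractions $K(H)$ and $\mathfrak{D}(K*U(H))$ are isomorphic as fields of fractions of $K*U(H)$, so it suffices to prove that $K(H)\cong E$.

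To set up Theorem~\ref{theo:Hughestheoremforliealgebras} I would use the concrete RN-series and $Q$-basis of Remark~\ref{rem:qbasisfree}: write $H=\bigoplus_{i\ge 1}N_i$ with its grading, let $H_m=\bigoplus_{i\ge m}N_i$ be the lower central series, choose a basis $\mathcal{C}_i$ of $N_i$, so that $\mathcal{C}=\bigcup_{i\ge 1}\mathcal{C}_i$ is a $Q$-basis and $\bigcup_{i=1}^{m-1}\mathcal{C}_i$ is a basis of $H/H_m$. Taking $D=E$ in Theorem~\ref{theo:Hughestheoremforliealgebras}, with $D_m$ the subfield of $E$ generated by $K*U(H_m)$, the one thing I must verify is that for each $m\ge 1$ the standard monomials in $\bigcup_{i=1}^{m-1}\mathcal{C}_i$ are linearly independent over $D_m$.

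The key step is to produce this independence via Proposition~\ref{prop:generalizationProp3.1Lichtman} applied to the ideal $N=H_m$. Each $H_m$ is a subalgebra of the free Lie algebra $H$, hence free by the Shirshov--Witt theorem, so $K*U(H_m)$ is again a fir and its universal field of fractions $R_\mathcal{P}$ (localization at the non-full matrices) is a field of fractions of $K*U(H_m)$; thus hypotheses (1) and (2) of Proposition~\ref{prop:generalizationProp3.1Lichtman} hold. The proposition then realizes, inside $E$, a crossed product $R_\mathcal{P}*U(H/H_m)$ extending $K*U(H)=\big(K*U(H_m)\big)*U(H/H_m)$, in which the copy of $R_\mathcal{P}$ is exactly $D_m$ and the standard monomials on the basis $\bigcup_{i=1}^{m-1}\mathcal{C}_i$ of $H/H_m$ are $D_m$-linearly independent. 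This is precisely the hypothesis of Theorem~\ref{theo:Hughestheoremforliealgebras} for $D=E$, so that theorem yields $K(H)\cong E$ as fields of fractions of $K*U(H)$; together with $K(H)\cong\mathfrak{D}(K*U(H))$ this shows that both embeddings are the universal field of fractions.

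The main obstacle is the middle assertion of the previous paragraph: one must know that the subfield $D_m$ of the \emph{universal} field $E$ generated by $K*U(H_m)$ is itself the universal field of fractions of $K*U(H_m)$, so that $R_\mathcal{P}\cong D_m$ and the $R_\mathcal{P}$-independence of the standard monomials (automatic from the crossed-product PBW structure of $R_\mathcal{P}*U(H/H_m)$) descends to $D_m$-independence in $E$. This identification is exactly what Lemma~\ref{lem:subringsofspecializations} delivers, through the specialization out of $E$ constructed in the proof of Proposition~\ref{prop:generalizationProp3.1Lichtman}; by contrast, the analogous independence over the canonical field provided directly by Proposition~\ref{prop:canonicalfieldoffractions}(ii) is not enough, since one needs it over a subfield of $E$. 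The secondary point to check is that $R_\mathcal{P}$ is a genuine field of fractions, for which the freeness of $H_m$ via Shirshov--Witt (making $K*U(H_m)$ a fir) is used.
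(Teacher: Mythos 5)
Your proposal is correct and follows essentially the same route as the paper's proof: both establish the fir property of $K*U(H_m)$ via freeness of subalgebras of free Lie algebras, invoke Proposition~\ref{prop:generalizationProp3.1Lichtman} to realize $R_{\mathcal{P}_m}*U(H/H_m)$ inside the universal field $E$ and hence verify the independence hypothesis of Theorem~\ref{theo:Hughestheoremforliealgebras}, and finish with Theorem~\ref{theo:k(H)doesnotdependontheorder}. Your write-up merely makes explicit (via Lemma~\ref{lem:subringsofspecializations}) the identification of $D_m$ with $R_{\mathcal{P}_m}$ that the paper leaves implicit in its appeal to the proposition.
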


\begin{proof}
Denote by $K*U(H)\hookrightarrow E$ the universal field of fractions
of $K*U(H)$. We follow the notation of Remark~\ref{rem:qbasisfree}.

It is known that any subalgebra of a free Lie algebra is a free Lie
algebra. Thus, for each $m\geq 1$, $K*U(H_m)$ is a fir and therefore
it has a universal field of fractions $K*U(H_m)\hookrightarrow
R_{\mathcal{P}_m}$ which, by Lemma~\ref{lem:crossedproductfir}, is a
universal localization at the prime matrix ideal $\mathcal{P}_m$.
Now, by Proposition~\ref{prop:generalizationProp3.1Lichtman},
$K*U(H)\hookrightarrow R_{\mathcal{P}_m}*U(H/H_m)\hookrightarrow E$.
Hence the conditions of
Theorem~\ref{theo:Hughestheoremforliealgebras} are satisfied. Thus
we can deduce that
 $K*U(H)\hookrightarrow E$ and
$K*U(H)\hookrightarrow K(H)$ are isomorphic fields of fractions. By
Theorem~\ref{theo:k(H)doesnotdependontheorder},
$K*U(H)\hookrightarrow \mathfrak{D}(K*U(H))$ is also  isomorphic
to the universal field of fractions of $K*U(H)$.
\end{proof}

For the missing details and definitions  in the next example, the
reader is referred to \cite{Makar-LimanovUmirbaevfreePoissonfields}
and the references therein.

\begin{ex} \label{ex:poisson}
Let $Q=P(x_1,\dotsc,x_n)$ be the free Poison field over $k$ in the
variables $x_1,\dotsc, x_n$ and let $Q^e$ be its universal
enveloping algebra.

In \cite[Theorem~1]{Makar-LimanovUmirbaevfreePoissonfields}, it is
proved that $Q^e$ satisfies the weak algorithm for a certain
filtration of $Q^e$. Thus $Q^e$ is a free ideal ring and, therefore,
it has a universal field of fractions. Although not stated
explicitly, it is also proved in \cite[Proposition~1,
Corollary~1]{Makar-LimanovUmirbaevfreePoissonfields} that $Q^e$ is
in fact a crossed product $K*U(H)$ of a commutative field $K$ over
$U(H)$, the universal enveloping algebra of the free Lie algebra $H$
on $x_1,\dotsc,x_n$. Indeed, by
\cite[Proposition~1]{Makar-LimanovUmirbaevfreePoissonfields}, the
morphism given in \cite[Theorem~5]{ChoOhSkewenveloping} is in fact
an isomorphism by a basis argument, and thus $Q^e$ is a crossed
product as stated. Then, by
Theorem~\ref{theo:LewintheoremforLiealgebras}, $Q^e\hookrightarrow
\mathfrak{D}(Q^e)$ and $Q^e\hookrightarrow K(H)$ are the universal
field of fractions of $Q^e$. We stress that  it cannot be deduced
from the results in \cite{Lichtmanuniversalfields} that  these
embeddings are the universal field of fractions of $Q^e$. \qed
\end{ex}

We remark on passing that if $R$ is an ordered $k$-algebra with
positive cone $P(R)$ (for unexplained terminology see for example
\cite[Section~9.6]{Cohnskew}), and $H$ is a residually nilpotent Lie
$k$-algebra with a Q-basis, then $R((H))$ is an ordered ring for any
crossed product $R*U(H)$. In particular, if $R=K$ is a field,
$K((H))$, $K(H)$ and $\mathfrak{D}(K*U(H))$ are ordered fields.
Indeed, if $\ell\colon R((H))\rightarrow R$ is the least element
map, then $\mathfrak{P}=\{f\in R((H))\mid \ell (f)\in P(R)\}$ is a
positive cone for $R((H))$. Clearly, $\mathfrak{P}\cap
-\mathfrak{P}=\emptyset$ and $\mathfrak{P}\cup -\mathfrak{P}=
R((H))\setminus\{0\}$. Moreover
$\mathfrak{P}\cdot\mathfrak{P}\subseteq \mathfrak{P}$ by
Lemma~\ref{lem:propertiesleastelementmapresidually}(iv), and it is
not difficult to prove that $\mathfrak{P}+\mathfrak{P}\subseteq
\mathfrak{P}$.


\section{Inversion height: the point of view of crossed products of Lie
algebras.}\label{sec:furtherexamples}

Let $R$ be a $k$-algebra with a field of fractions
$\varepsilon\colon R\hookrightarrow D$. Let $H$ be a residually
nilpotent Lie $k$-algebra with an RN-series $\{H_i\}_{i=1}^\infty$
that has a Q-basis $\mathcal{C}=\bigcup_{i=1}^\infty\mathcal{C}_i$.
Consider a crossed product $R*U(H)$ and suppose that it can be
extended to a crossed product structure $D*U(H)$. Then, by
Remark~\ref{rem:extensioncrossedproduct} and
Lemma~\ref{lem:uniqueextensionderivation}, we can consider the
crossed product  $D_\varepsilon(n)*U(H)$ for each $n\geq
0$. Moreover,
$$R*U(H)\hookrightarrow D_\varepsilon(n)*U(H)\hookrightarrow
D_\varepsilon(n+1)*U(H)\hookrightarrow D*U(H).$$ Consider the
embedding $$\iota\colon R*U(H)\hookrightarrow
\mathcal{L}_n=D_\varepsilon(n)((H))\hookrightarrow
\mathcal{L}_{n+1}=D_\varepsilon(n+1)((H))\hookrightarrow D((H))=E.$$

Note that if $f\in D_\varepsilon(n)((H))$, then the least element
map $\ell\colon D((H))\rightarrow D$ is such that $\ell(f)\in
D_\varepsilon (n)$.

With this notation, we can prove an analogous result to
Theorem~\ref{theo:inversionheightseries}.

\begin{theo}\label{theo:inversionheightlieseries} The following hold
true
\begin{enumerate}[\rm(i)]
\item $E_\iota(n)\subseteq \mathcal{L}_n$ for each integer $n\geq
0$.
\item Let $f\in D$. If $\h_\varepsilon(f)=n$, then $\h_\iota(f)=n$.
\item $\h_\iota(R*U(H))\geq \h_\varepsilon(R).$
\end{enumerate}
\end{theo}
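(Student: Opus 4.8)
The plan is to follow verbatim the strategy of Theorem~\ref{theo:inversionheightseries}, replacing the Malcev--Neumann coefficient extraction by the least element map $\ell\colon D((H))\to D$ of Section~\ref{subsec:residuallynilpotentcase} together with its properties recorded in Lemma~\ref{lem:propertiesleastelementmapresidually}.

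For (i) I would argue by induction on $n$. The base case is $E_\iota(0)=R*U(H)\subseteq R((H))=\mathcal{L}_0$, which holds because $D_\varepsilon(0)=R$ and $R*U(H)\hookrightarrow R((H))$ by the construction of the ring of series. For the inductive step, since $\mathcal{L}_{n+1}$ is a ring containing $\mathcal{L}_n\supseteq E_\iota(n)$, it suffices to show that every nonzero $f\in E_\iota(n)$ satisfies $f^{-1}\in\mathcal{L}_{n+1}$. By the induction hypothesis $f\in\mathcal{L}_n=D_\varepsilon(n)((H))$, so by the remark preceding the statement $\ell(f)\in D_\varepsilon(n)$, and $\ell(f)\neq 0$ by Lemma~\ref{lem:propertiesleastelementmapresidually}(ii). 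Hence $\ell(f)^{-1}\in D_\varepsilon(n+1)$ by the very definition of $D_\varepsilon(n+1)$, so $\ell(f)$ is invertible in $D_\varepsilon(n+1)$. Viewing $f$ as an element of $\mathcal{L}_{n+1}=D_\varepsilon(n+1)((H))$, whose least element is still $\ell(f)$ (the least element map being insensitive to enlarging the coefficient field), Lemma~\ref{lem:propertiesleastelementmapresidually}(v) gives that $f$ is invertible in $\mathcal{L}_{n+1}$, and the inverse computed there is $f^{-1}$ by uniqueness of inverses. Thus every ring generator of $E_\iota(n+1)$ lies in $\mathcal{L}_{n+1}$, proving the inclusion.

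For (ii) the case when $R$ is a field is trivial, so assume it is not. Let $f\in D$ with $\h_\varepsilon(f)=n$, i.e. $f\in D_\varepsilon(n)\setminus D_\varepsilon(n-1)$. The bound $\h_\iota(f)\le n$ follows from Remarks~\ref{rems:usedalot}: writing $\iota_{\mid R}$ as the composite $R\hookrightarrow D\hookrightarrow E$ gives $D_\varepsilon(n)\subseteq E_\iota(n)$ via $R\subseteq R*U(H)$, so $f\in E_\iota(n)$. For the reverse inequality, suppose toward a contradiction that $f\in E_\iota(n-1)$. By (i) this forces $f\in\mathcal{L}_{n-1}=D_\varepsilon(n-1)((H))$, whence $\ell(f)\in D_\varepsilon(n-1)$ by the remark preceding the statement. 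But $f\in D$, so $\ell(f)=f$ by Lemma~\ref{lem:propertiesleastelementmapresidually}(i); therefore $f\in D_\varepsilon(n-1)$, a contradiction. Part (iii) is then immediate: $\h_\iota(R*U(H))$ is the supremum of $\h_\iota(g)$ over $g\in E_\iota$, which already dominates $\sup_{f\in D}\h_\iota(f)=\sup_{f\in D}\h_\varepsilon(f)=\h_\varepsilon(R)$ by (ii).

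The only genuinely delicate point is the compatibility of the least element map under the chain $D_\varepsilon(n)((H))\hookrightarrow D_\varepsilon(n+1)((H))\hookrightarrow D((H))$ used in the inductive step of (i): one must check that the least element of $f$ is unchanged when the coefficient field is enlarged, so that Lemma~\ref{lem:propertiesleastelementmapresidually}(v) may be invoked inside $\mathcal{L}_{n+1}$ rather than only inside $E$. This follows directly from the recursive definition of $\ell$, which at each stage of the RN-filtration peels off the coefficient of the least standard monomial and never refers to the ambient field of coefficients; I expect this to be the one verification requiring a careful word, with everything else being a faithful transcription of the Malcev--Neumann argument.
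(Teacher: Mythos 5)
Your argument is correct, and its overall architecture (induction on $n$ for (i); for (ii) the contradiction obtained from $\ell(f)=f$ for $f\in D$ together with $\ell(\mathcal{L}_{n-1})\subseteq D_\varepsilon(n-1)$; (iii) from (ii)) is the same as the paper's. The one place where you genuinely diverge is the inductive step of (i): the paper does \emph{not} invoke Lemma~\ref{lem:propertiesleastelementmapresidually}(v) there, but instead unwinds the definition of $\ell$ into the descending ordinal chain $\beta_1>\beta_2>\dotsb>\beta_r=0$ and proves by an explicit recurrence, via the geometric-series expansion
$f_{r-(p+1)}^{-1}=(f_{r-p}z^{l_0}_{\gamma_{r-p}})^{-1}\sum_{q\geq 0}\bigl(g_{r-(p+1)}(f_{r-p}z_{\gamma_{r-p}}^{l_0})^{-1}\bigr)^q$,
that each $f_{r-p}^{-1}$ lies in $S_m((H/H_m))$ with $S_m=D_\varepsilon(n+1)*U(H_m)$; the point of redoing the computation is precisely to certify that every coefficient produced along the way stays in $D_\varepsilon(n+1)$. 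You replace this by a single application of Lemma~\ref{lem:propertiesleastelementmapresidually}(v) with coefficient ring $D_\varepsilon(n+1)$, using that $\ell(f)\in D_\varepsilon(n)\setminus\{0\}$ is already invertible in the ring $D_\varepsilon(n+1)$. This is legitimate and shorter, but it shifts the burden onto exactly the point you flag: that the least element of $f$ is unchanged when the coefficient ring is enlarged from $D_\varepsilon(n)$ to $D_\varepsilon(n+1)$ (a coefficient-ring analogue of Lemma~\ref{lem:propertiesleastelementmap}(iii), which the paper states only for sub-Lie-algebras, not subrings of coefficients). Your justification — that the recursive definition of $\ell$ peels off coefficients of least standard monomials compatibly with the embeddings $D_\varepsilon(n)((N_\beta))\hookrightarrow D_\varepsilon(n+1)((N_\beta))$ — is the right one, and once it is written out your route and the paper's establish the same fact; the paper's explicit recurrence simply makes that compatibility visible instead of assuming it.
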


\begin{proof}
(i) We proceed by induction on $n$. For $n=0$, the result holds
since $R*U(H)\hookrightarrow R((H))$. Suppose that the result holds
for $n\geq 0$. Let $0\neq f\in E_\iota(n)\subseteq
D_\varepsilon(n)((H))$. Consider the least element map $\ell\colon
D_\varepsilon(n)((H))\rightarrow D_\varepsilon(n)$.

For each $m\geq 1$, set $R_m=D_\varepsilon (n)*U(H_m)$,
$S_m=D_\varepsilon(n+1)*U(H_m)$ and consider $\ell_m\colon
R_m((H/H_m))\rightarrow R_m$.

Set $f_0=f$. There exists a descending chain of ordinals
$$\beta_1=\gamma_1+1>\beta_2=\gamma_2+1>\dotsb>\beta_{r-1}=\gamma_{r-1}+1>\beta_r=0,$$
such that $\beta_i$ is the least ordinal such that $f_{i-1}\in
R_m((N_{\beta_i}))=R_m((N_{\gamma_i}))((z_{\gamma_i};\delta_{u_{\gamma_i}}))$
and $f_i$ is the coefficient of the least element in $\supp f_{i-1}$
as a series in $z_{\gamma_i}$ for $i=1,\dotsc,r$. Moreover,
$\ell(f)=f_{r-1}\in D_\varepsilon(n)$. We prove by recurrence that
$f_{r-p}^{-1}\in S_m((H/H_m))$ for $p=1,\dotsc,r$. For $p=1$, the
result holds because $f_{r-1}^{-1}\in D_\varepsilon(n+1)\subseteq
S_m((H/H_m))$. By definition,
$$f_{r-(p+1)}=f_{r-p}z_{\gamma_{r-p}}^{l_0}+\sum_{l> l_0} c_lz_{\gamma_{r-p}}^l$$
where $c_l\in R_m((N_{\gamma_{r-p}}))$ for all $c_l$. Define
$g_{r-(p+1)}=f_{r-p}z_{\gamma_{r-p}}^{l_0}-f_{r-(p+1)}$. Then
$$f_{r-(p+1)}^{-1}=(f_{r-p}z^{l_0}_{\gamma_{r-p}})^{-1} \sum_{q\geq 0}(g_{r-(p+1)}(f_{r-p}z_{\gamma_{r-p}}^{l_0})^{-1})^q.$$
Now it is not difficult to prove that
$(g_{r-(p+1)}(f_{r-p}z_{\gamma_{r-p}}^{l_0})^{-1})^q\in S_m
((N_{\gamma_{r-p}}))((z_{\gamma_{r-p}};\delta_{u_{\gamma_{r-p}}}))$
for each $q$, and therefore $f_{r-(p+1)}^{-1}\in S_m((H/H_m))$.
Hence $f_0\in S_m((H/H_m))\subseteq D_\varepsilon(n+1)((H))$, as
desired.

(ii) Let $f\in D_\varepsilon(n+1)\setminus D_\varepsilon(n)$. Since
$D_\varepsilon (n+1)\subseteq D$, $\ell(f)=f$. Suppose that $f\in
D_\varepsilon (n)((H))$. Then $f=\ell(f)\in D_\varepsilon (n)$, a
contradiction.

(iii) Follows from (ii).
\end{proof}

Let $I$ be a set of cardinality at least two and let $\{H_i\}_{i\in
I}$ be a set of  Lie $k$-algebras. Set $H$ to be the free product of
such algebras, that is, $H=\coprod_{i\in I} H_i$. Consider the
direct sum $\bigoplus_{i\in I} H_i$. For each $i\in I$, let
$\pi_i\colon H_i\hookrightarrow \bigoplus_{i\in I} H_i$ be the
canonical inclusion. Let $\pi\colon \coprod_{i\in I}H_i\rightarrow
\bigoplus_{i\in I}H_i$ be the unique morphism of Lie $k$-algebras
such that $\pi_{\mid_{H_i}}=\pi_i$. The subalgebra $\ker \pi$ is
called the \emph{cartesian subalgebra} of the free product $H$.

By $[x,y]_n$, we denote the product $[[\dotso[[x,y],y],\dotso],y]$
with $n$ factors $y$.


\begin{coro}\label{coro:freeenvelopinginfiniteinversionheight}
Let $I$ be a set of cardinality at least two and let $\{H_i\}_{i\in
I}$ be a set of  nilpotent Lie $k$-algebras.  Set $H=\coprod_{i\in
I} H_i$. Let $U(H)$ be the universal enveloping algebra of $H$ and
consider the embedding $\iota\colon U(H)\hookrightarrow
\mathfrak{D}(K*U(H))$. Then $\h_\iota(U(H))=\infty$. Indeed, let
$x\in H_i\setminus\{0\}$ and $y\in H_j\setminus\{0\}$ with $i\neq
j$. If $f$ is any entry of the inverse of the $n\times n$ matrix
$$A_n=\left(\begin{array}{cccc}
[[x,y],x] & [[x,y],x]_2 & \dots & [[x,y],x]_n \\
{} [[x,y]_2,x] & [[x,y]_2,x]_2 & \dots & [[x,y]_2,x]_n \\
\dots & \dots & \ddots & \dots\\
{} [[x,y]_n,x] & [[x,y]_n,x]_2 & \dots & [[x,y]_n,x]_n
\end{array}\right)$$
then $\h_\iota(f)=n$.

In particular, if $X$ is a set of cardinality at least two and
$\freealgebra kX$ is the free  $k$-algebra on $X$, then the
universal field of fractions $\iota\colon\freealgebra kX
\hookrightarrow  F$ is of infinite inversion height. Indeed, let
$x,y\in X$ be different elements. If $f$ is any entry of the inverse
of the $n\times n$ matrix,
$$A_n=\left(\begin{array}{cccc}
[[x,y],x] & [[x,y],x]_2 & \dots & [[x,y],x]_n \\
{} [[x,y]_2,x] & [[x,y]_2,x]_2 & \dots & [[x,y]_2,x]_n \\
\dots & \dots & \ddots & \dots\\
{} [[x,y]_n,x] & [[x,y]_n,x]_2 & \dots & [[x,y]_n,x]_n
\end{array}\right)$$
then $\h_\iota(f)=n$.
\end{coro}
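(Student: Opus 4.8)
The plan is to transport the proof of the preceding group-theoretic corollary to the Lie setting, using the series ring of Section~\ref{sec:afieldoffractions} and Theorem~\ref{theo:inversionheightlieseries} in place of the Malcev--Neumann construction and Theorem~\ref{theo:inversionheightseries}. Let $\pi\colon H=\coprod_{i\in I}H_i\to\bigoplus_{i\in I}H_i$ be the canonical projection and let $N=\ker\pi$ be the cartesian subalgebra. I would use two structural facts: first, that $N$ is a \emph{free} Lie $k$-algebra (the Lie analogue of the Kurosh subgroup theorem, $N$ meeting the free factors trivially), so that $U(N)=\freealgebra kZ$ is the free associative algebra on a free generating set $Z$ of $N$, with $Z$ infinite since $|I|\ge2$ and the $H_i$ are nonzero; and second, that $H/N\cong\bigoplus_{i\in I}H_i$ is residually nilpotent with an RN-series having a $Q$-basis. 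The latter holds because, each $H_i$ being nilpotent, the lower central series satisfies $\gamma_j(\bigoplus_iH_i)=\bigoplus_i\gamma_j(H_i)$, has trivial intersection, and admits compatibly chosen bases of its graded pieces, exactly as in Examples~\ref{ex:qbasis}. By Lemma~\ref{lem:importantpropertycrossedproduct} I may then write $U(H)=U(N)*U(H/N)$. Finally, by Remarks~\ref{rems:usedalot}(a) the inversion heights in question are unchanged if $\mathfrak{D}(K*U(H))$ is replaced by the subfield rationally generated by $U(H)$; this is the canonical field of fractions $\mathfrak{D}(U(H))$, so I may assume $K=k$.

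Next I would pin down the ambient field. Let $\varepsilon\colon U(N)=\freealgebra kZ\hookrightarrow D$ be the universal field of fractions (the free field); since $Z$ is infinite, Corollary~\ref{coro:matrixheight} gives $\h_\varepsilon(U(N))=\infty$. Because $N$ is free, Theorem~\ref{theo:LewintheoremforLiealgebras} identifies $D$ with $k(N)=\mathfrak{D}(U(N))$, and Corollary~\ref{coro:k(H)ascrossedproduct}, applied to the ideal $N$ of $H$ (both $N$ and $H/N$ being residually nilpotent with a $Q$-basis), shows that $U(H)\hookrightarrow\mathfrak{D}(U(H))$ is isomorphic, as a field of fractions, to the embedding $U(H)\hookrightarrow k(N)(H/N)\subseteq D((H/N))$. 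This places us precisely in the hypotheses of Theorem~\ref{theo:inversionheightlieseries} with $R=U(N)$ and the residually nilpotent algebra there taken to be $H/N$: the crossed product $U(N)*U(H/N)=U(H)$ extends to $D*U(H/N)$ because the derivations $\delta_x$ extend uniquely from $U(N)$ to its field of fractions $D$ by Lemmas~\ref{lem:extensionofderivations} and~\ref{lem:uniqueextensionderivation}. Theorem~\ref{theo:inversionheightlieseries}(iii) then yields $\h_\iota(U(H))\ge\h_\varepsilon(U(N))=\infty$.

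For the quantitative assertion I would note that the $n^2$ entries $[[x,y]_p,x]_q$ (for $1\le p,q\le n$) all lie in $N$, so $A_n$ is a matrix over $U(N)=\freealgebra kZ$. Once one knows these entries are distinct members of a free generating set of $N$, the matrix $A_n$ is \emph{generic}, Corollary~\ref{coro:matrixheight} gives $\h_\varepsilon(f)=n$ for every entry $f$ of $A_n^{-1}$, and Theorem~\ref{theo:inversionheightlieseries}(ii) upgrades this to $\h_\iota(f)=n$. The displayed ``in particular'' is the special case $H_i=kx$ for $x\in X$: then $H=\coprod_{x\in X}kx$ is the free Lie algebra on $X$, so $U(H)=\freealgebra kX$ and $\mathfrak{D}(U(H))$ is its universal field of fractions by Theorem~\ref{theo:LewintheoremforLiealgebras}, and the general statement applies verbatim.

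The step I expect to be the main obstacle is the genericity input just used: that $[[x,y]_p,x]_q=[x,\underbrace{y,\dots,y}_{p},\underbrace{x,\dots,x}_{q}]$ (left-normed) are $n^2$ distinct elements of a free generating set of the cartesian subalgebra $N$. I would argue it in stages. Since $x\in H_i$ and $y\in H_j$ with $i\ne j$, they generate inside the free product a free Lie subalgebra $L$ on $\{x,y\}$; the image of $L$ in $\bigoplus_iH_i$ is the abelian algebra $k\bar x\oplus k\bar y$, whence $L\cap N=[L,L]$. The derived algebra of a free Lie algebra on two generators is itself free on the left-normed brackets $[x,y,u_1,\dots,u_m]$ with $u_\ell\in\{x,y\}$, and the $[[x,y]_p,x]_q$ occur among these and are pairwise distinct. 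The remaining point---that a free generating set of $[L,L]$ can be completed to one of $N$, so that $A_n$ is generic over $U(N)$ rather than merely over $U([L,L])$, and that the corresponding universal fields of fractions sit compatibly inside $D$---is of the inertness and compatibility type recorded in Section~\ref{localitzacio}, and is the Lie-theoretic counterpart of the explicit free generating set of the kernel exhibited in the group-theoretic corollary.
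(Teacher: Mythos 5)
Your proposal follows the paper's proof essentially step for step: decompose $U(H)$ as $U(N)*U(H/N)$ with $N$ the cartesian subalgebra (free on an infinite set), observe that $H/N\cong\bigoplus_i H_i$ is residually nilpotent with a $Q$-basis, identify $\mathfrak{D}(K*U(H))$ with $K(N)(H/N)$ sitting inside the series ring via Corollary~\ref{coro:k(H)ascrossedproduct} and Theorem~\ref{theo:LewintheoremforLiealgebras}, and then invoke Corollary~\ref{coro:matrixheight} together with Theorem~\ref{theo:inversionheightlieseries}(ii)--(iii). That is exactly the paper's route.

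The one point where you diverge is the genericity input, and it is the one genuine soft spot. The paper disposes of it by citing an explicit free generating set of the cartesian subalgebra (Bokut--Kukin, Section 4.10) that can be chosen to contain the $n^2$ elements $[[x,y]_p,x]_q$. Your substitute --- pass to the free Lie subalgebra $L$ on $\{x,y\}$, note $L\cap N=[L,L]$, and then ``complete a free generating set of $[L,L]$ to one of $N$'' --- does not go through as stated: free generating sets of free subalgebras of free Lie algebras do not extend to free generating sets of the ambient algebra in general (already $[x,y]$ in the free Lie algebra on $\{x,y\}$ is a counterexample), and the honesty/inertness machinery of \S\ref{localitzacio} is not what settles this. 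The issue matters quantitatively: if the entries are only known to be free generators of $U([L,L])$, then Remarks~\ref{rems:usedalot}(b) gives only $\h_\varepsilon(f)\le n$ for the embedding $\varepsilon\colon U(N)\hookrightarrow K(N)$, whereas the exact value $\h_\varepsilon(f)=n$ from Corollary~\ref{coro:matrixheight} requires the entries to be distinct members of a free generating set of $N$ itself. So you have correctly isolated the needed fact, but the fact has to be taken from the structure theory of the cartesian subalgebra of a free product (as the paper does), not derived from $[L,L]$ by a completion argument.
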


\begin{proof}
Let $N$ be the cartesian subalgebra of $H$. By \cite[Theorem
4.10.5]{BokutKukinbook}, $N$ is a free Lie $k$-algebra on an
infinite set $Y$, and thus $U(N)$ is a free $k$-algebra on $Y$.
Moreover, it is not difficult to see that $H/N\cong \bigoplus_{i\in
I} H_i$ is a residually nilpotent Lie $k$-algebra with an RN-series
that has a Q-basis. By Corollary~\ref{coro:k(H)ascrossedproduct},
$U(H)\hookrightarrow \mathfrak{D}(K*U(H))$ can be seen as
$U(H)\hookrightarrow k(N)(\frac{H}{N})\hookrightarrow
k(N)((\frac{H}{N}))$. By
Theorem~\ref{theo:reutenauerinversionheight} and
Theorem~\ref{theo:LewintheoremforLiealgebras}, $\varepsilon\colon
U(N)\hookrightarrow K(N)$ is of infinite inversion height. By
Theorem~\ref{theo:inversionheightlieseries}, $h_\iota
(U(H))=\infty$.

Moreover, using \cite[Section~4.10]{BokutKukinbook},  $Y$ can be
chosen to contain the elements $[[x,y]_i,x]_j$. By
Theorem~\ref{theo:reutenauerinversionheight}, for each entry $f$ of
$A_n^{-1}$, $\h_{\varepsilon}(f)=n$. Applying
Theorem~\ref{theo:inversionheightlieseries}, we obtain that
$\h_\iota(f)=n$.

When $H$ is the free Lie algebra on a set $X$, put $I=X$.  Then $H$
is the free product of the abelian (and hence nilpotent) Lie
$k$-algebras generated by each $x\in X$. Now apply the foregoing,
and note that $\mathfrak{D}(U(H))$ is the universal field of
fractions of $U(H)$, by
Theorem~\ref{theo:LewintheoremforLiealgebras}.
\end{proof}

We remark that the statement of
Corollary~\ref{coro:freeenvelopinginfiniteinversionheight} works for
any set $\{H_i\}_{i\in I}$ of residually nilpotent Lie $k$-algebras
with a Q-basis because they induce a natural  RN-series with a
Q-basis in $\bigoplus_{i\in I}H_i$. Also, it is known that the free
product of residually nilpotent Lie algebras is a residually
nilpotent Lie algebra, see for example \cite[p.175]{BokutKukinbook}.
On the other hand, we do not know whether there exists an RN-series
of the free product with a Q-basis.

Note that  by choosing different elements (or changing the basis) of
$N$, other elements of prescribed inversion height $n$ can can be
found.

Another way of obtaining the second part of
Corollary~\ref{coro:freeenvelopinginfiniteinversionheight} is the
following. By \cite{Baumslagfreeliealgebras}, if $N\neq H$ is an
ideal of the free (not commutative) Lie algebra $H$, then $N$ is a
free Lie algebra not finitely generated. Thus, choosing $N$ such
that $H/N$ is nilpotent, we get  elements of inversion height
$n$ for any $n^2$ different free generators of $N$.

\section*{Acknowledgments}

Both authors are grateful to Bill Chin for pointing out suitable references on  Lie algebra crossed products.

The second author would like to thank Alexander Lichtman for
interesting conversations on the papers
\cite{Lichtmanvaluationmethods}, \cite{Lichtmanuniversalfields} and
related topics. He also thanks the Mathematics Department at the
University of Wisconsin Parkside, where these conversations took
place, for its kind hospitality during his visit.

\bibliographystyle{amsplain}
\bibliography{grupitosbuenos}

\end{document}